\def\@seccntformat#1{\csname the#1\endcsname.\quad}
\theoremstyle{plain}
\newtheorem{theorem}{Theorem}
\newtheorem{proposition}[theorem]{Proposition}
\newtheorem{lemma}[theorem]{Lemma}
\theoremstyle{definition}
\theoremstyle{remark}
\newtheorem{remark}[theorem]{Remark}
\theoremstyle{remark}
\numberwithin{equation}{section}
\newcommand{\mmod}{\operatorname{mod}}
\newcommand{\ubar}[1]{\underaccent{\bar}{#1}}
\newcommand{\ccc}{c}
\newcommand{\cccu}{\bar{\ccc}}
\newcommand{\cccl}{\ubar{\ccc}}
\newcommand{\rdet}{\operatorname{rdet}}
\newcommand{\rdetu}{\overline{\rdet}}
\newcommand{\rdetl}{\underline{\rdet}}
\newcommand{\rdetinfty}{\rdet_\infty}
\newcommand{\rdetuinfty}{\overline{\rdet}_\infty}
\newcommand{\rdetlinfty}{\underline{\rdet}_\infty}
\newcommand{\SIGMA}{\mathcal{A}}
\newcommand{\SIGMAinf}{\Sigma}
\newcommand{\LLl}{\mathcal{L}}
\newcommand{\KKk}{\mathcal{K}}
\newcommand{\NNN}{\mathbb N}
\newcommand{\ZZZ}{\mathbb Z}
\newcommand{\Orb}{\operatorname{Orb}}
\newcommand{\Per}{\operatorname{Per}}
\newcommand{\diam}{\operatorname{diam}}
\newcommand{\dist}{\operatorname{dist}}
\newcommand{\closure}[1]{\overline{#1}}
\newcommand{\eps}{\varepsilon}
\newcommand{\abs}[1]{\lvert#1\rvert}
\newcommand{\card}[1]{\##1}
\begin{document}
\bibliographystyle{abbrv}

\title{Recurrence determinism and Li-Yorke chaos for interval maps}

\author[V. \v Spitalsk\'y]{Vladim\'ir \v Spitalsk\'y}
\address{Slovanet a.s., Z\'ahradn\'icka 151, Bratislava, Slovakia}
\email{vladimir.spitalsky@slovanet.net}
\address{Department of~Mathematics, Faculty of~Natural Sciences,
    Matej Bel University, Tajovsk\'eho~40, Bansk\'a Bystrica, Slovakia}
\email{vladimir.spitalsky@umb.sk}

\subjclass[2010]{Primary 37E05; Secondary 37B05, 54H20}

\keywords{recurrence determinism, interval map, Li-Yorke chaos}

\begin{abstract}
Recurrence determinism, one of the fundamental characteristics of
recurrence quantification analysis, measures predictability of a trajectory of a dynamical system.
It is tightly connected with the conditional probability that, given a recurrence,
following states of the trajectory will be recurrences.

In this paper we study recurrence determinism of interval dynamical systems.
We show that recurrence determinism distinguishes three main types of $\omega$-limit sets
of zero entropy maps:
finite, solenoidal without non-separable points, 
and solenoidal with non-separable points.
As a corollary we obtain characterizations of strongly non-chaotic and Li-Yorke (non-)chaotic
interval maps via recurrence determinism. For strongly non-chaotic maps, recurrence determinism is
always equal to one. Li-Yorke non-chaotic interval maps are those for which recurrence determinism
is always positive. Finally, Li-Yorke chaos implies the existence of a Cantor set of points
with zero determinism.
\end{abstract}

\maketitle

\thispagestyle{empty}

\section{Introduction}\label{S:intro}

Recurrence plots, introduced by Eckmann et al.~\cite{eckmann1987recurrence},
provide a visual representation of trajectories of dynamical systems, which is well-suited
for data analysis.
Quantitative study of recurrence plots, called
recurrence quantification analysis \cite{zbilut1992embeddings},
has been successfully applied in many areas of science;
see \cite{webber2015recurrence} for a comprehensive overview of the subject.
One of its basic and most used characteristics is called determinism.
A slightly modified notion, which will be called here recurrence determinism
and denoted by $\rdet_m(x,\eps)$,
is tightly connected with the conditional probability that the next $m$ states of the trajectory
of a point $x$ will be $\eps$-recurrences given that the current state is an $\eps$-recurrence;
see Section~\ref{SS:determinism} for details. Thus if recurrence determinism is high,
upon encountering a recurrence we can successfully predict subsequent $m$ states of
the trajectory.

Asymptotic properties of various quantitative recurrence characteristics
were studied in
\cite{faure1998new,thiel2003analytical,zou2007analytical,faure2010recurrence,
grendar2013strong,faure2015estimating,ramdani2016recurrence},
among others.
The purpose of this paper is to show that behavior of recurrence
determinism $\rdetinfty(x,\eps)$ for small $\eps$
is able to distinguish among various types of interval dynamics.
In \cite[Theorem~B]{spitalsky2016local} it was proved that topological entropy
is the supremum of local correlation entropies. As a consequence we have that
every positive entropy interval system has a Cantor set of points $x$ whose determinism
$\rdet_m(x,\eps)$ converges to zero exponentially
fast as $m\to\infty$, and thus $\rdetinfty(x,\eps)=0$ for all sufficiently small $\eps>0$.
For zero entropy systems which are Li-Yorke chaotic, recurrence determinism
$\rdetinfty(x,\eps)$ can still be equal to zero.
On the other hand, interval maps which are not Li-Yorke chaotic have recurrence determinism
always positive. Finally, if recurrence determinism is equal to one for every
point of the interval, then the system is {strongly non-chaotic}, that is,
all $\omega$-limit sets of it are finite.
All these results are summarized in the following theorem.

\begin{theorem}\label{T:main}
  Let $f:I\to I$ be continuous. Then
  \begin{enumerate}
    \item\label{IT:T:main:1}
      $f$ is strongly non-chaotic if and only if $\rdetinfty(x,\eps)=1$
      for every $x\in I$ and every sufficiently small $\eps>0$;
    \item\label{IT:T:main:2}
      $f$ is Li-Yorke non-chaotic if and only if $\rdetlinfty(x,\eps)>0$
      for every $x\in I$ and every sufficiently small $\eps>0$;
    \item\label{IT:T:main:3}
      $f$ is Li-Yorke chaotic if and only if there is a Cantor set $C$ such that
      $\rdetinfty(x,\eps)=0$
      for every $x\in C$ and every sufficiently small $\eps>0$.
  \end{enumerate}
\end{theorem}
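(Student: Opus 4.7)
The plan is to split the argument along the entropy dichotomy and then, inside the zero-entropy case, along the well-known structural trichotomy for $\omega$-limit sets of interval maps: finite, solenoidal with only separable points, and solenoidal containing non-separable points. The positive-entropy case is already handled by \cite[Theorem B]{spitalsky2016local}, which supplies a Cantor set of points whose $\rdet_m(x,\eps)$ decays exponentially in $m$; this simultaneously gives the Cantor set required in~\eqref{IT:T:main:3} and rules out~\eqref{IT:T:main:1} and~\eqref{IT:T:main:2}. So the real work is in the zero-entropy regime, where each implication of the theorem should correspond to exactly one layer of the trichotomy.

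\textbf{Part~\eqref{IT:T:main:1}.} For the forward direction I would fix $\eps$ smaller than the least distance between distinct points of the (finitely many, finite) $\omega$-limit sets. Every forward orbit eventually enters an arbitrarily small neighbourhood of some periodic orbit on which the map acts as a cyclic permutation, and for such tails every $\eps$-recurrence is automatically followed by another $\eps$-recurrence, giving $\rdetinfty(x,\eps)=1$. Before that entry time only finitely many indices contribute, so they do not affect the asymptotic ratio. Conversely, if some $\omega(x)$ is infinite, I would locate a point $y\in\omega(x)$ and a scale $\eps$ such that infinitely many returns of the orbit to the $\eps$-neighbourhood of $y$ are mapped outside the $\eps$-neighbourhood of $f(y)$ (by infiniteness of $\omega(x)$ and continuity of $f$), forcing $\rdet_2(x,\eps)<1$.

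\textbf{Part~\eqref{IT:T:main:2}.} Assume $f$ is Li-Yorke non-chaotic. Then $f$ has zero entropy and every infinite $\omega$-limit set is a solenoid without non-separable points, so on each such limit set the dynamics is topologically conjugate to an adding machine (odometer). I would use the periodic system of cycles of intervals approximating the solenoid: for small enough $\eps$ the $\eps$-recurrences of an orbit inside a level-$k$ cycle come in blocks of length equal to the level-$k$ period, so the lower density of recurrences that extend to the next step is bounded below by a quantity depending only on the combinatorics of the cycle, not on $m$; this yields $\rdetlinfty(x,\eps)>0$. The reverse direction is immediate from~\eqref{IT:T:main:3}, which provides a point with $\rdetinfty(x,\eps)=0$ whenever $f$ is Li-Yorke chaotic.

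\textbf{Part~\eqref{IT:T:main:3}, which is the heart of the paper.} For the reverse direction, a single point with $\rdetinfty=0$ is incompatible with either finite or odometer-like asymptotic dynamics (by parts~\eqref{IT:T:main:1} and~\eqref{IT:T:main:2}), hence $f$ must be Li-Yorke chaotic. For the forward direction, positive entropy is handled by \cite[Theorem B]{spitalsky2016local}. In the zero-entropy Li-Yorke chaotic case there is a solenoidal $\omega$-limit set with a pair of non-separable points $a,b$ lying in the same level-$k$ interval of the approximating cycle for arbitrarily large $k$, yet whose orbits eventually separate at a definite scale. Using this pair and the branch structure of the cycles of intervals, I would build, by a standard Cantor-scheme construction, uncountably many points $x$ whose itinerary alternates between very long blocks where $x$ shadows $a$ (producing long recurrent strings in the recurrence plot) and forced ``escape'' steps that break every such string. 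By choosing the block lengths along the construction to grow fast enough, the ratio of recurrences that extend $m$ steps can be driven below any given positive number for every $m$, giving $\rdetinfty(x,\eps)=0$ on a Cantor subset.

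\textbf{Main obstacle.} The delicate step is the Cantor-set construction in part~\eqref{IT:T:main:3}: one must realise a prescribed pattern of long recurrent runs punctuated by non-extendable steps simultaneously for uncountably many points, using only the combinatorics of the solenoidal cycles of intervals and the non-separability of a single pair. Balancing the relative lengths of the recurrent and non-recurrent blocks so that $\rdet_m$ tends to zero \emph{uniformly in $m$} (rather than only along a subsequence) is where the bulk of the technical work is expected to sit.
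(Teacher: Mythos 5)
Your high-level skeleton (entropy dichotomy, zero-entropy trichotomy, mutual exclusivity of \eqref{IT:T:main:2} and \eqref{IT:T:main:3} for the reverse implications) matches the paper, but two of your key steps have genuine gaps. First, the reverse implication in \eqref{IT:T:main:1}. You need to show that if some $\omega_f(x)$ is infinite — in the remaining case, solenoidal \emph{without} non-separable points — then $\rdetinfty(x,\eps)=1$ fails at arbitrarily small scales. Your argument produces ``infinitely many'' returns near $y$ whose images leave the $\eps$-neighbourhood of $f(y)$; but infinitely many bad pairs among the $\sim n^2$ pairs is perfectly compatible with $\rdet_2(x,n,\eps)\to 1$, so you need a \emph{positive proportion} of non-extendable recurrences, and you need it for arbitrarily small $\eps$, not ``a scale $\eps$''. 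The paper's Theorem~\ref{T:example} shows this is genuinely delicate: there is a type-$2^\infty$ non-chaotic map with $\rdetinfty(x,\eps_k)=1$ along a sequence $\eps_k\downarrow 0$, so the scales at which determinism drops must be extracted from the combinatorics of the covering intervals. That is exactly the content of Proposition~\ref{P:solenoid-notLYC-upper-bound} ($\liminf_{\eps\to 0}\rdetuinfty(x,\eps)\le 4/5$), proved via the $\eps_t$ and $a_*,a^*$ counting lemmas, and nothing in your sketch replaces it. Relatedly, in \eqref{IT:T:main:2} your claim that for small $\eps$ the $\eps$-recurrences ``come in blocks of length equal to the level-$k$ period'' overlooks that the cylinders $K_\alpha$ need not shrink below $\eps$ (a $K_\alpha$ can be a non-degenerate interval even when $\omega_f(x)$ has no non-separable points). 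The paper's Proposition~\ref{P:solenoid-notLYC-lower-bound} instead bounds $\cccl_\infty(x,\eps)\ge 1/p_s$ by showing $\abs{x_{i+h}-x_i}\le\eps$ for all $i$ and all $h\in p_s\NNN$, and it is precisely in controlling from which side the orbit approaches the endpoints of the long gaps that the absence of non-separable points is used; your outline never engages with this.

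For the zero-entropy half of \eqref{IT:T:main:3} your route is both unnecessary and doubtful. No Cantor-scheme construction is needed: by Blokh's structure theorem (Lemma~\ref{L:solenoidal-blokh}) the set $C$ of limit points of $Q\cap\Omega(f)$ is already a Cantor set with $\omega_f(y)=C$ for every $y\in C$, and since $C$ contains a non-separable pair (Lemma~\ref{L:nonseparable-in-C}), Proposition~\ref{P:solenoid-LYC} gives $\rdetinfty(y,\eps)=0$ simultaneously for all $y\in C$. Moreover the mechanism is not ``long recurrent runs punctuated by escape steps'': determinism vanishes because the numerator $\cccu_\infty(x,\eps)$ is itself $0$ — for any lag $h$ not divisible by $p_s$ the orbit points $x_i$ and $x_{i+h}$ eventually lie on opposite sides of the long gap $K_\alpha$, so $\varrho_\infty(x_i,x_{i+h})>\eps$ — while the denominator $\cccl_1(x,\eps)$ stays bounded away from $0$ (Lemma~\ref{L:corr-sum-lower-bound}). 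Your construction presupposes itinerary freedom (independently prescribed shadowing blocks and forced escape steps for uncountably many points) that zero-entropy interval maps do not possess; that kind of flexibility is characteristic of positive entropy, which is handled separately anyway. The ``main obstacle'' you single out — balancing block lengths so that $\rdet_m\to 0$ uniformly in $m$ — is thus an artifact of your plan rather than the actual difficulty, and as stated this part of the proposal would not go through.
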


The next theorem states that, for any fixed point $x$,
behavior of $\rdetinfty(x,\eps)$ for small $\eps>0$
depends on the type of the $\omega$-limit set $\omega_f(x)$ of $x$.
For the corresponding definitions see Section~\ref{S:preliminaries}.

\begin{theorem}\label{T:omega}
  Let $f:I\to I$ be a continuous map with zero entropy and $x\in I$.
  Then
  \begin{enumerate}
    \item\label{IT:omega-finite}
      $\omega_f(x)$ is finite
      if and only if
      $$
        \rdetinfty(x,\eps)=1
        \qquad
        \text{for every sufficiently small }\eps>0;
      $$
    \item\label{IT:omega-nonLY}
      $\omega_f(x)$ is solenoidal and does not contain non-separable points
      if and only if
      $$
        \rdetlinfty(x,\eps)>0
        \qquad
        \text{for every }\eps>0
      $$
      and
      $$
        \liminf_{\eps\to 0}  \rdetuinfty(x,\eps) < 1;
      $$
    \item\label{IT:omega-withLY}
      $\omega_f(x)$ is solenoidal and contains non-separable points
      if and only if
      $$
        \rdetinfty(x,\eps)=0
        \qquad
        \text{for every sufficiently small }\eps>0.
      $$
  \end{enumerate}
\end{theorem}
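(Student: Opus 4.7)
The plan is to use the well-known structural trichotomy of $\omega$-limit sets of zero-entropy continuous interval maps: $\omega_f(x)$ is either (i) a periodic orbit, (ii) a solenoidal set whose nested $2^n$-cycles of periodic intervals have diameters tending to zero (no non-separable points), or (iii) a solenoidal set containing a pair of non-separable points, which is equivalent to the existence of a Li-Yorke pair inside $\omega_f(x)$. Since these three cases are mutually exclusive and exhaustive, it suffices to prove the ``only if'' direction in each of (1)--(3); the converses follow by elimination.

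Case (1) should be almost immediate. If $\omega_f(x)$ is a periodic orbit of period $p$, then for every $\eps>0$ smaller than half the minimal gap between its points, the tail of the trajectory visits the $\eps$-balls around the orbit in strict cyclic order. Consequently every $\eps$-recurrence $(f^i(x),f^j(x))$ satisfies $p\mid j-i$, so $(f^{i+k}(x),f^{j+k}(x))$ is again an $\eps$-recurrence for every $k\ge 0$. Every recurrence thus lies on an arbitrarily long diagonal, yielding $\rdet_m(x,\eps)=1$ for every $m$ and hence $\rdetinfty(x,\eps)=1$.

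Case (3) will be handled by exploiting a non-separable pair $y\neq z\in\omega_f(x)$. For all sufficiently small $\eps$ the approximation of $y$ and $z$ by the orbit of $x$ produces arbitrarily many pairs of times $i_l<j_l$ such that $f^{i_l}(x)$ and $f^{j_l}(x)$ are $\eps$-close but their orbits must separate by more than $2\eps$ after at most $N_l$ steps, where $N_l$ is dictated by the Li-Yorke proximity pattern rather than by $l$. Combined with the solenoidal combinatorics ensuring that the total count of $\eps$-recurrences up to time $m$ grows linearly in $m$, the average diagonal length stays bounded while the number of recurrences diverges; this yields $\rdet_m(x,\eps)\to 0$ as $m\to\infty$, i.e.\ $\rdetinfty(x,\eps)=0$.

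The main obstacle is case (2), where one must simultaneously establish a positive lower bound on $\rdetlinfty(x,\eps)$ for every $\eps>0$ and an upper bound on $\rdetuinfty(x,\eps)$ that stays strictly below $1$ along a subsequence $\eps\to 0$. For the lower bound, fix $\eps>0$ and choose $n$ so large that the $2^n$ periodic intervals $J^n_0,\dots,J^n_{2^n-1}$ covering $\omega_f(x)$ all have diameter $<\eps$ and pairwise distance $\ge 2\eps$; then every $\eps$-recurrence time of $x$ extends to a diagonal of length at least $2^n$, yielding $\rdetlinfty(x,\eps)$ bounded below by an explicit positive constant. For the upper bound one tracks the ``boundary'' contribution at scales $\eps_n$ chosen between the maximal diameters at the $n$-th and $(n+1)$-st refinement levels: a uniformly positive fraction of recurrences then pair points lying in the same $J^n_k$ but in different sub-intervals of the next refinement, forcing their diagonals to break within $O(2^n)$ steps and keeping $\rdetuinfty(x,\eps_n)$ uniformly bounded away from $1$. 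The principal technical burden is the quantitative bookkeeping relating recurrence densities at successively finer scales of the solenoid.
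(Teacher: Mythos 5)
Your overall architecture (prove only the left-to-right implications and conclude by mutual exclusivity of the three hypotheses and of the three conclusions) is exactly the paper's, and your treatment of case (1) is correct. But there is a genuine gap at the heart of cases (2) and (3): you implicitly assume that the nested periodic intervals shrink, i.e.\ that for every $\eps>0$ one can choose $n$ so that all $2^n$ periodic intervals at level $n$ have diameter $<\eps$ (and even pairwise distance $\ge 2\eps$). This fails in general. The set $Q=\bigcap_t Q_t$ enclosing a solenoidal $\omega$-limit set may have non-degenerate connected components $K_\alpha=[y_\alpha,z_\alpha]$; the absence of non-separable points only guarantees (via Lemmas~\ref{L:solenoidal-blokh} and \ref{L:nonseparable}) that $\omega_f(x)$ meets such a component in exactly \emph{one} of its two endpoints, not that the component is degenerate. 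Consequently, for each of the finitely many $\alpha$ with $\diam K_\alpha\ge\eps/2$, the level-$n$ interval containing $K_\alpha$ has diameter $\ge\eps/2$ for \emph{every} $n$, and your lower-bound argument for $\rdetlinfty(x,\eps)>0$ never gets started. The paper's Proposition~\ref{P:solenoid-notLYC-lower-bound} handles precisely this: it shows that the orbit of $x$ eventually avoids each such $K_\alpha$ and approaches it from one side only, so that two orbit points whose times differ by a multiple of $p_s$ lie in a set of the form $K_b\setminus K_\alpha$ of measure $<\eps$. This one-sidedness (which is exactly where ``no non-separable points'' enters) is the main technical content of case (2), and it is absent from your sketch. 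The same issue infects your upper bound: the scales $\eps_n$ cannot be taken ``between the maximal diameters at consecutive levels'' since those maxima need not tend to $0$; the paper instead uses the quantities $\eps_t=\max_a\dist(K_{a_*},K_{a^*})$ and must \emph{prove} they tend to $0$ using Lemma~\ref{L:nonseparable}.

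Case (3) is also underpowered as stated. To get $\rdetinfty(x,\eps)=0$ you need the \emph{numerator} $\cccu_\infty(x,\eps)$ to vanish, i.e.\ that for every $s$, \emph{all} pairs $(x_i,x_{i+h})$ with $h\notin p_s\NNN$ are eventually separated by more than $\eps$ under iteration (so that only lags divisible by $p_s$ contribute, forcing $\cccu_\infty\le 1/p_s$ for all $s$). Exhibiting ``arbitrarily many'' recurrent pairs that later separate, as you propose, does not bound the \emph{proportion} of everlasting recurrences and so does not force the ratio $C_\infty/C_1$ to zero; and your claim that the average diagonal length is bounded is neither justified nor sufficient for the determinism defined here. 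The missing mechanism is combinatorial: a non-separable pair is the pair of endpoints of a non-degenerate component $K_\alpha$ with $\diam K_\alpha\ge\eps$ (Lemma~\ref{L:nonseparable}), the orbit of $x$ accumulates on $K_\alpha$ from both sides, and the adding-machine arithmetic on $\SIGMA^t$ guarantees that for any lag $h\notin p_s\NNN$ there is a time at which $x_{i}$ and $x_{i+h}$ sit in periodic intervals lying on opposite sides of $K_{\alpha[0,t)}$, hence at distance $>\eps$. Without this step the argument does not close.
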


Note that the conditions from
Theorem~\ref{T:omega}\eqref{IT:omega-nonLY} cannot be strengthened.
As was shown in \cite[Theorem~4.12]{majerova2016correlation},
determinism of a Li-Yorke non-chaotic map can be strictly smaller than $1$ for every small $\eps$,
even with $\limsup_{\eps\to 0}  \rdetuinfty(x,\eps) < 1$.
On the other hand, our final result asserts that,
for a Li-Yorke non-chaotic map with an infinite $\omega$-limit set,
there are no uniform boundaries
for recurrence determinism, that is, we cannot find $\alpha$ and $\beta$ such that
$0<\alpha\le  \rdetlinfty(x,\eps) \le  \rdetuinfty(x,\eps) \le \beta < 1$
for every sufficiently small $\eps>0$.

\begin{theorem}\label{T:example}
  There is a Li-Yorke non-chaotic map $f:I\to I$ of type $2^\infty$
  with unique infinite minimal set $C$,
  and sequences $(\eps_k)_k$, $(\eps_k')_k$ decreasing to zero
  such that, for every $x\in C$,
  $\rdetinfty(x,\eps_k) = 1$  for every $k$ and
  $\lim_k\rdetuinfty(x,\eps_k') = 0$.
  Consequently,
  $$
    \limsup_{\eps\to 0}\rdetlinfty(x,\eps) = 1
    \quad\text{and}\quad
    \liminf_{\eps\to 0}\rdetuinfty(x,\eps) = 0
    \qquad
    \text{for every } x\in C.
  $$
\end{theorem}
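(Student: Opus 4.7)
The plan is an explicit construction. I build $f$ as a piecewise linear interval map obtained via a period-doubling cascade, with geometric parameters specified below. The resulting $f$ is of type $2^\infty$ with unique infinite minimal set $C$, a Cantor solenoid conjugate to the $2$-adic odometer; Li-Yorke non-chaoticity will follow because $C$ has no pair of non-separable points, via the classical Sharkovsky--Smital theory for type $2^\infty$ maps. Let $J_n^{(0)},\dots,J_n^{(2^n-1)}$ denote the level-$n$ periodic intervals, permuted cyclically by $f$, with $J_n^{(\ell)}$ and $J_n^{(\ell+2^{n-1})}$ the two paired sub-blocks of the level-$(n-1)$ parent $J_{n-1}^{(\ell)}$; write $a_n^{(\ell)}$ for the gap between these siblings and $D_n^{(\ell)}=\diam J_n^{(\ell)}$. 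For $x\in C$, two orbit points $f^ix, f^jx$ share a level-$n$ block iff $2^n\mid(j-i)$; when the $2$-adic valuation $v(j-i)$ equals $m$, they lie in the paired level-$(m+1)$ sub-blocks of a common level-$m$ parent, so $a_{m+1}^{(\ell(x,i,m))}\le|f^ix-f^jx|\le D_m^{(\ell(x,i,m))}$ for an index $\ell(x,i,m)$ determined (by translation) by $x$ and $i\bmod 2^m$.

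For the sequence $(\eps_k)$ I reserve a subsequence $(n_k)$ of levels at which the cascade is kept \emph{uniform}: $D_{n_k}^{(\ell)}\equiv D_{n_k}$, $a_{n_k}^{(\ell)}\equiv a_{n_k}$, and $D_{n_k}<a_{n_k}$, with the global constraint that $a_n$ decreases monotonically. Picking $\eps_k\in(D_{n_k},a_{n_k})$, one has $d(f^ix,f^jx)<\eps_k$ iff $f^ix$ and $f^jx$ share a level-$n_k$ block iff $2^{n_k}\mid(j-i)$. The $\eps_k$-recurrence set $\{(i,j):2^{n_k}\mid(j-i)\}$ is invariant under the diagonal shift $(i,j)\mapsto(i+1,j+1)$, so every recurrence lies on an infinite diagonal line and $\rdetinfty(x,\eps_k)=1$ for every $x\in C$.

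For $(\eps_k')$ I introduce strong non-uniformity across a range of levels below each $n_k$. Choose $K_k\to\infty$ and, for every level $m\in[n_k-K_k+1,n_k]$, arrange $a_m^{(\ell)}$ to take a small value when $\ell$ is even and a large value when $\ell$ is odd, with both scales chosen so that a single threshold $\eps_k'$ lies strictly between them for every such $m$. At scale $\eps_k'$, a pair $(i,j)$ with $v(j-i)=m-1$ (with $m$ in the range) is a recurrence iff $\ell(x,i,m-1)$ is even; but shifting $i$ by $1$ toggles the parity of $\ell$, so the pair $(i+1,j+1)$ is not a recurrence, and $(i,j)$ is an isolated point in the recurrence plot. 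Pairs with $v(j-i)\ge n_k$ still form infinite diagonals. A count in the window $\{0,\dots,N\}^2$ yields isolated recurrences of order $N^2\sum_{m=n_k-K_k+1}^{n_k}2^{-m}\sim N^2\cdot 2^{-(n_k-K_k)}$, while infinite-diagonal recurrences contribute of order $N^2\cdot 2^{-n_k}$. Hence $\rdetuinfty(x,\eps_k')\le C\cdot 2^{1-K_k}\to 0$ as $k\to\infty$.

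The main obstacle is the joint geometric feasibility: forcing a single $\eps_k'$ to straddle the small/large thresholds at all levels $m\in[n_k-K_k+1,n_k]$ imposes nested multiplicative constraints (with an asymmetry ratio $c_k\to 0$ at the non-uniform levels to allow $K_k\to\infty$) on the gap and diameter parameters, which must be simultaneously satisfied while keeping $f$ piecewise linear, continuous, of type $2^\infty$, with $C$ its unique infinite minimal set and no non-separable points. The uniform levels $(n_k)$ and the non-uniform level ranges must also be interleaved without interference. Once the construction is carried out, the ``consequently'' assertion is immediate: since $\eps_k,\eps_k'\to 0$ and $0\le\rdetlinfty\le\rdetuinfty\le 1$, we get $\limsup_{\eps\to 0}\rdetlinfty(x,\eps)\ge\lim_k\rdetinfty(x,\eps_k)=1$ and $\liminf_{\eps\to 0}\rdetuinfty(x,\eps)\le\lim_k\rdetuinfty(x,\eps_k')=0$ for every $x\in C$.
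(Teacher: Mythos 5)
Your overall strategy is the same as the paper's: build the dyadic solenoid explicitly as a nested system of periodic intervals, make some levels metrically ``uniform'' (a threshold $\eps_k$ separating all level-$n_k$ diameters from all gaps forces the $\eps_k$-recurrence relation to be exactly $2^{n_k}\mid(j-i)$, hence determinism $1$), and make other levels non-uniform so that at a suitable $\eps_k'$ the count $C_1$ dwarfs $C_\infty$. Your parity-alternation mechanism for the non-uniform levels is a workable alternative to the paper's device (one oversized interval at the address $1^t$ with gap conditions along the rightmost branch), and the final ``consequently'' step is fine. But there are genuine gaps. First, your bookkeeping of levels is inconsistent as written: the non-uniform range $[n_k-K_k+1,\,n_k]$ includes the uniform level $n_k$ itself, where you have already required $a_{n_k}^{(\ell)}\equiv a_{n_k}>\eps_k>\eps_k'$ for \emph{all} $\ell$, which is incompatible with making half of those gaps smaller than $\eps_k'$; moreover, since $K_k\to\infty$ and the construction must proceed level by level, these ranges eventually reach back into coarser levels (including earlier $n_j$) whose geometry is already fixed and cannot be retrofitted. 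The ranges must be placed strictly between consecutive uniform levels (finer than $n_k$, coarser than $n_{k+1}$), which is exactly the interleaving $t_n<t_n'<t_{n+1}$ the paper sets up; with that repair your count gives $\rdetuinfty\lesssim 2^{-K_k}$ as intended.

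Second, and more seriously for a theorem whose entire content is a construction, you explicitly defer the ``main obstacle'': the simultaneous realizability of the nested diameter/gap constraints, and the verification that the resulting piecewise linear map is of type $2^\infty$, has $C$ as its unique infinite minimal set, and is Li-Yorke non-chaotic. These claims do not come for free from Sharkovsky--Smital theory; you must first know the map has zero entropy and that \emph{every} infinite $\omega$-limit set is $C$ (with singleton fibers $K_\alpha$, so no non-separable pair), and none of this is argued for your hand-built map. The paper disposes of all of it at once: it shows (Lemma~\ref{L:fK}) that \emph{any} admissible nested interval system can be realized by conjugating Delahaye's map by a homeomorphism matching the intervals, so the dynamical properties are inherited, and only the elementary interval geometry remains to be chosen; its non-uniform levels are also simpler (conditions \eqref{EQ:L:fK-det0} involve only one exceptional address), making the feasibility check a one-line induction. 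Until you either carry out your piecewise linear construction with these verifications or substitute a conjugacy argument of this kind, the proof is incomplete.
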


\medskip
In the following two sections we recall necessary
notions and facts and we prove some preliminary lemmas.
Theorems~\ref{T:main}--\ref{T:example}
are then proved in Sections~\ref{S:proof-theorem} and
\ref{S:construction}.

\section{Preliminaries}\label{S:preliminaries}
The sets of all, positive, non-negative integers are denoted by $\ZZZ$, $\NNN$, $\NNN_0$, respectively.
Let $I$ denote the unit interval $[0,1]$ equipped with the Euclidean metric $\varrho$,
and $\lambda$ denote the Lebesgue measure on $I$.
If $J,J'$ are intervals, by $J<J'$ ($J\le J'$) we mean that $y<y'$ ($y\le y'$)
for every $y\in J$ and $y'\in J'$; analogously we define $a<J$ and $a\le J$ for any real number
$a$.
If $A$ is a subset of a topological space, its boundary is denoted by $\partial A$.
The cardinality of a finite set $A$ is denoted by $\card A$.

If no confusion can arise, a set $\{m,m+1,\dots,n\}$ of consecutive integers
is denoted by $[m,n]$ or by $[m,n+1)$.
If $\alpha=\alpha_0\alpha_1\dots$ is a (finite or infinite) sequence and $0\le k<l$
are integers, by $\alpha_{[k,l)}$ or $\alpha[k,l)$ we mean $\alpha_k\alpha_{k+1}\dots\alpha_{l-1}$.

\subsection{Dynamical systems}\label{SS:dynamical-systems}

A \emph{dynamical system} is a pair $(X,f)$, where $X$ is a compact metric space
with a metric $\varrho$,
and $f:X\to X$ is a continuous map. A nonempty subset $Y$ of $X$ is called \emph{$p$-periodic}
(for some $p\in\NNN$) if $f^i(Y)$, $0\le i<p$, are pairwise disjoint and $f^p(Y)=Y$.
Note that if $Y$ is $p$-periodic and $Y'\subseteq Y$ is $p'$-periodic then
$p'$ is a multiple of $p$.

A point $x\in X$ is called \emph{$p$-periodic} or just \emph{periodic} if $\{x\}$ is $p$-periodic.
It is called \emph{non-wandering} if for every neighborhood $U$ of $x$ there is $n>0$ such that
$f^n(U)\cap U\ne\emptyset$.
The sets of all periodic and all non-wandering points of $(X,f)$ are denoted by $\Per(f)$ and
$\Omega(f)$, respectively.
The \emph{orbit} of $x$ is the set $\Orb_f(x)=\{f^n(x):\ n\in\NNN_0\}$.
The \emph{$\omega$-limit set} of $x$,
that is, the set of all limit points of the trajectory $(f^n(x))_{n\in\NNN_0}$,
is denoted by $\omega_f(x)$.

Let $f:X\to X$ be a dynamical system. A pair $(y,z)$ of points from $X$ is called a
\emph{scrambled pair} if
\begin{equation*}
  \liminf_{n\to\infty} \varrho(f^n(y), f^n(z))=0
  \qquad\text{and}\qquad
  \limsup_{n\to\infty} \varrho(f^n(y), f^n(z))>0.
\end{equation*}
A set $S\subseteq X$ is called \emph{scrambled} if $(y,z)$ is a scrambled pair for every $y\ne z$ from $S$.
The dynamical system $(X,f)$ is \emph{Li-Yorke chaotic} if there exists an uncountable scrambled
set $S\subseteq X$.

Let $f:I\to I$ be continuous. Following \cite[Definition~2.1]{smital1986chaotic},
we say that points $y,z\in I$ are \emph{separable} if there are disjoint
periodic intervals $J_y,J_z$ such that $y\in J_y$ and $z\in J_z$. If $y,z$ are distinct
and not separable, we say that they are \emph{non-separable}.
By \cite[Theorem~2.2]{smital1986chaotic} (see also \cite[Theorem~5.21]{ruette2017chaos}),
a zero entropy interval map is Li-Yorke chaotic if and only if
there exists an infinite $\omega$-limit set containing two non-separable points.

\subsection{Solenoidal $\omega$-limit sets}\label{SS:solenoidal-omega-limit}
Let $f:I\to I$ be a continuous map and $x\in I$ be such that $\omega_f(x)$
is \emph{solenoidal}. That is (see \cite[p.~4]{blokh1995spectral}), there are
a sequence of integers $2\le p_0<p_1 <\dots$ and
 a sequence of non-degenerate closed intervals $J_0\supseteq J_1\supseteq \dots$ such
that every $J_k$ is $p_k$-periodic and $\omega_f(x)\subseteq Q$, where
\begin{equation}\label{EQ:def-Q}
 Q=\bigcap_{t=0}^\infty Q_t,\qquad
 Q_t= \bigsqcup_{i=0}^{p_t-1} f^i(J_t).
\end{equation}
Put $q_0=p_0$ and $q_t=p_t/p_{t-1}$ for $t\ge 1$.
Define
\begin{equation*}
  \SIGMAinf = \prod_{i=0}^\infty \{0,\dots,q_i-1\},
  \qquad
  \SIGMA^t = \prod_{i=0}^{t-1} \{0,\dots,q_i-1\}
  \quad (t\ge 1);
\end{equation*}
every element $a=a_0a_1\dots a_{t-1}$
of $\SIGMA^t$ is called a \emph{word} and the \emph{length} of it
is $\abs{a}=t$.
Define also $\SIGMA^0=\{o\}$ (a singleton set containing the empty word $o$)
and $\SIGMA^*=\bigsqcup_{t\ge 0} \SIGMA^t$.
Let $\pi_t:\SIGMAinf\to\SIGMA^t$ ($t\ge 0$)
be the natural projection onto the first $t$ coordinates.
For $a\in\SIGMA^t$ denote by $[a]$ the set of all sequences $\alpha\in\SIGMAinf$
and all words $b\in\SIGMA^*$
\emph{starting with $a$} (i.e.~$\pi_t(\alpha)=\pi_t(b)=a$).

On $\SIGMAinf$ and on every $\SIGMA^t$ define
addition in a natural way with carry from left to right; the sets
$\SIGMAinf$ and $\SIGMA^t$ equipped with this operation are abelian groups.
Identify $10^\infty\in\SIGMAinf$ and every $10^{t-1}\in\SIGMA^t$ ($t\ge 1$) with integer $1$,
and inductively define
$\alpha+n$, $a+n$ for $\alpha\in\SIGMAinf$, $a\in\SIGMA^t$, and $n\in\ZZZ$.

For $t\ge 0$ write
$$
 Q_t = \bigsqcup_{a\in\SIGMA^t} K_a,
 \qquad
 \text{where } K_{0^t+i} =  f^i(J_t)
 \text{ for every } i\in[0,p_t).
$$
Notice that every $K_a$ ($a\in\SIGMA^t)$ is a non-degenerate closed
$p_t$-periodic interval $[y_a,z_a]$,
and $K_{b}\subseteq K_{a}$ for every $b\in[a]$.
We can also write
$$
  Q = \bigsqcup_{\alpha\in\SIGMAinf} K_\alpha,
  \qquad\text{where }
  K_\alpha = \bigcap_{t=0}^\infty K_{\pi_t(\alpha)}
  \text{ for every } \alpha\in\SIGMAinf.
$$
Here, every $K_\alpha$ is either a singleton $\{y_\alpha\}$ or a
non-degenerate closed interval $[y_\alpha,z_\alpha]$.

The following is a direct consequence of \cite[Theorem~3.1]{blokh1995spectral}.
\begin{lemma}\label{L:solenoidal-blokh}
  Let $f:I\to I$ be continuous and $x\in I$.
  Let $\omega_f(x)$ be solenoidal and $Q$ be as defined above.
  Let $C\subseteq Q\cap\closure{\Per}(f)$ be the (Cantor) set of all limit points
  of $Q\cap\Omega(f)$. Then the following assertions are true:
  \begin{enumerate}
    \item \label{IT:solenoidal-blokh-omegay}
      if $y\in Q$ then $\omega_f(y)=C$;
    \item \label{IT:solenoidal-blokh}
      if $y\in I$ and $\omega_f(y)\cap Q\ne\emptyset$ then
      $C\subseteq \omega_f(y)\subseteq Q\cap\Omega(f)$; moreover, for every $\alpha\in\SIGMAinf$,
      \begin{equation*}
        \emptyset
        \ne C\cap K_\alpha
        \subseteq \omega_f(y)\cap K_\alpha
        \subseteq \Omega(f)\cap K_\alpha
        \subseteq \partial K_\alpha.
      \end{equation*}
  \end{enumerate}
\end{lemma}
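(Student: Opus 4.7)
The proof is an application of Blokh's spectral decomposition (Theorem~3.1 of \cite{blokh1995spectral}) to the solenoidal structure supplied by the nested periodic intervals $\{J_t\}$; the plan is to trace through what each clause of the lemma needs, isolating the one step that genuinely requires Blokh rather than the cyclic combinatorics of the $K_a$'s.

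For part~\eqref{IT:solenoidal-blokh-omegay}, if $y\in Q$ then $y\in K_\alpha$ for a unique $\alpha\in\SIGMAinf$, and the identity $f(K_a)=K_{a+1}$ at every level $t$ (addition taken in the group $\SIGMA^t$) gives $f^n(y)\in K_{\pi_t(\alpha)+n}$ for every $n\in\NNN_0$. In particular $f^n(y)\in Q_t$ for every $n$, so $\omega_f(y)\subseteq Q_t$ for each $t$ and hence $\omega_f(y)\subseteq Q$; moreover $\omega_f(y)\cap K_a\ne\emptyset$ for every $a\in\SIGMA^*$. Blokh's theorem then identifies this common $\omega$-limit set with $C$.

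For part~\eqref{IT:solenoidal-blokh}, fix $z\in\omega_f(y)\cap Q$. Closedness and forward invariance of $\omega$-limit sets yield $\omega_f(z)\subseteq\omega_f(y)$; applying \eqref{IT:solenoidal-blokh-omegay} to $z$ gives $\omega_f(z)=C$, so $C\subseteq\omega_f(y)$. For the absorbing inclusion $\omega_f(y)\subseteq Q$, fix $t$ and construct an open forward-invariant neighborhood $U_t=\bigsqcup_i U_t^{(i)}$ of $Q_t$: by uniform continuity of $f$ together with $f(K_{0^t+i})=K_{0^t+(i+1)}$ (addition in $\SIGMA^t$), we can enlarge each $K_{0^t+i}$ to an open interval $U_t^{(i)}$ with $f(U_t^{(i)})\subseteq U_t^{(i+1)}$. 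Since $z\in U_t$ is a limit point of the orbit, the orbit eventually enters $U_t$ and, by invariance, never leaves; thus $\omega_f(y)\subseteq\overline{U_t}$. Shrinking the enlargements down to $Q_t$ gives $\omega_f(y)\subseteq Q_t$ for every $t$, hence $\omega_f(y)\subseteq Q$. The inclusion $\omega_f(y)\subseteq\Omega(f)$ is a standard general fact.

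For the final chain, non-emptiness of $C\cap K_\alpha$ for every $\alpha\in\SIGMAinf$ is part of Blokh's description of the solenoidal basic set, and the two middle inclusions follow by intersecting $C\subseteq\omega_f(y)\subseteq\Omega(f)$ with $K_\alpha$. The last inclusion $\Omega(f)\cap K_\alpha\subseteq\partial K_\alpha$ is the substantive consequence drawn directly from Blokh's theorem: a non-wandering point lying in a solenoidal cylinder must be an endpoint of that cylinder. This is the step I expect to be the main obstacle if one tried to argue from scratch; every other piece of the lemma reduces to the cyclic combinatorics of the $K_a$'s, forward invariance of $Q_t$, and standard properties of $\omega$-limit sets.
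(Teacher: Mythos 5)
The paper offers no proof of this lemma at all: it is stated as ``a direct consequence of [Blokh, Theorem~3.1]''. Your proposal is therefore consistent with the paper in its overall strategy, since you defer to Blokh for exactly the substantive items (the identification of the common $\omega$-limit set of points of $Q$ with $C$, the non-emptiness of $C\cap K_\alpha$, and the inclusion $\Omega(f)\cap K_\alpha\subseteq\partial K_\alpha$). The elementary bookkeeping in part~(1) is also correct: $f(K_a)=K_{a+1}$ holds because $K_{0^t+i}=f^i(J_t)$ and $f^{p_t}(J_t)=J_t$, so the orbit of any $y\in Q$ stays in every $Q_t$ and meets every cylinder.

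The one step you try to carry out by hand, however, has a genuine gap: the construction of arbitrarily small forward-invariant open neighborhoods $U_t\supseteq Q_t$. Uniform continuity gives you, for a prescribed enlargement $U_t^{(i+1)}$ of $K_{0^t+i+1}$, a sufficiently small enlargement $U_t^{(i)}$ of $K_{0^t+i}$ with $f(U_t^{(i)})\subseteq U_t^{(i+1)}$; but the condition is cyclic (you also need $f(U_t^{(p_t-1)})\subseteq U_t^{(0)}$), and the chain of continuity estimates forces the neighborhoods to shrink as you go around the cycle, so it need not close up. More fundamentally, a forward-invariant compact set need not be Lyapunov stable: if an endpoint of $J_t$ is a repelling periodic point (typical in the period-doubling situation), points just outside $Q_t$ are pushed away from it and no small neighborhood of $Q_t$ maps into itself. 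The conclusion $\omega_f(y)\subseteq Q_t$ is nevertheless reachable by an elementary argument available at that stage of your proof: you have already shown $C\subseteq\omega_f(y)$, and $C$ is uncountable while $\partial Q_t$ is finite, so some $z\in C\subseteq Q_t$ lies in the interior of $Q_t$; since $z\in\omega_f(y)$, some iterate $f^n(y)$ lies in that interior, hence in $Q_t$, and the genuine forward invariance $f(Q_t)=Q_t$ of the closed set (no neighborhood required) gives $f^{n+k}(y)\in Q_t$ for all $k\ge 0$, whence $\omega_f(y)\subseteq Q_t$ and, intersecting over $t$, $\omega_f(y)\subseteq Q$.
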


Since every $Q_t$ is $f$-invariant and has finite boundary, we immediately have
the following lemma.

\begin{lemma}\label{L:solenoidal-orbit-intersects-Qk}
  Let $\omega_f(x)$ be solenoidal and $Q=\bigcap Q_t$ be as defined above. Then
  for every $t$ there is $n_0$ such that $f^n(x)\in Q_t$ for every $n\ge n_0$.
\end{lemma}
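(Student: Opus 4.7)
The plan is to exploit two features of $Q_t$: it is forward invariant under $f$, and its boundary $\partial Q_t$ is finite. Forward invariance will reduce the claim to showing that the orbit enters $Q_t$ at least once; the finite boundary, combined with the infinitude of $\omega_f(x)$ (which has to be established separately), will rule out the only way the orbit could fail to enter.

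First I would observe forward invariance: since the $p_t$ disjoint intervals $f^i(J_t)$, $0\le i<p_t$, are cyclically permuted by $f$, we have $f(Q_t)\subseteq Q_t$. Consequently, if $f^{n_0}(x)\in Q_t$ for any single $n_0$, then $f^n(x)\in Q_t$ for all $n\ge n_0$ and we are done. It therefore suffices to exclude the alternative that $f^n(x)\notin Q_t$ for every $n\ge 0$. In that alternative every accumulation point of the trajectory lies in $\closure{I\setminus Q_t}=I\setminus\operatorname{int}(Q_t)$, so the standing containment $\omega_f(x)\subseteq Q\subseteq Q_t$ forces
$$
  \omega_f(x)\subseteq Q_t\cap\bigl(I\setminus\operatorname{int}(Q_t)\bigr)=\partial Q_t.
$$
Since $\partial Q_t$ is a finite set, the contradiction will come from showing that $\omega_f(x)$ is infinite.

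The infinitude of $\omega_f(x)$ is, I think, the only substantive (though minor) obstacle. A finite $\omega$-limit set of an interval map is a periodic orbit, so assume toward contradiction that $\omega_f(x)$ is a periodic orbit of period $N$. The inclusion $\omega_f(x)\subseteq Q_s$ together with the fact that $f$ cyclically permutes the $p_s$ disjoint intervals $f^i(J_s)$ forces this orbit to visit each of them in turn, so $p_s\le N$ for every $s$. But $p_s\to\infty$ by the definition of a solenoidal $\omega$-limit set, a contradiction. Combining this with the previous paragraph yields the lemma.
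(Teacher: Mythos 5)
Your proof is correct and follows essentially the same route as the paper, which justifies the lemma in one line by exactly the two facts you exploit: the $f$-invariance of $Q_t$ (so one visit suffices) and the finiteness of $\partial Q_t$ (which, since $\omega_f(x)\subseteq Q_t$ is infinite, forces a visit). Your explicit verification that a solenoidal $\omega$-limit set is infinite, via $p_s\le N$ for a putative period $N$, is a detail the paper leaves implicit but is part of the same argument.
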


The next lemma follows from \cite[Theorems~5.4 and 4.1(d)]{blokh1995spectral} and
\cite{smital1986chaotic};
see also \cite[Proposition~5.24]{ruette2017chaos}.

\begin{lemma}\label{L:omega-limits-for-zero-entropy}
  Let $f:I\to I$ have zero entropy and $x\in I$. Then  one of the two cases happens:
  \begin{enumerate}
    \item\label{IT:L:omega-finite} $\omega_f(x)$ is finite (and hence a periodic orbit);
    \item\label{IT:L:omega-solenoid} $\omega_f(x)$ is solenoidal.
  \end{enumerate}
   If (\ref{IT:L:omega-solenoid}) is true then $Q$ is $2$-adic, that is,
   one can choose $p_t=2^t$ for every $t$.
\end{lemma}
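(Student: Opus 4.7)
The plan is to reduce the lemma to Sharkovsky's theorem combined with Misiurewicz's characterization of zero topological entropy: a continuous interval map has zero entropy if and only if every periodic orbit has period a power of two, i.e., $f$ is of type $2^\infty$. Given this, the two alternatives correspond, respectively, to the orbit of $x$ being eventually trapped in a single periodic cycle of intervals, or in arbitrarily fine nested cycles whose periods exactly double.

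For (\ref{IT:L:omega-finite}), if $\omega_f(x)$ is finite then it is a finite, closed, forward $f$-invariant set. I would invoke the classical fact, valid for any continuous interval map, that every finite $f$-invariant closed set is a union of periodic orbits, and then use the fact that an $\omega$-limit set is internally chain-transitive to conclude that it must consist of a single periodic orbit.

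For (\ref{IT:L:omega-solenoid}), assume $\omega_f(x)$ is infinite. I would inductively construct the nested sequence of non-degenerate closed intervals $J_0 \supseteq J_1 \supseteq \cdots$, with each $J_t$ being $2^t$-periodic. Start with $J_0 = I$. Given $J_t$ such that $\omega_f(x) \subseteq Q_t = \bigsqcup_{i=0}^{2^t-1} f^i(J_t)$, set $g = f^{2^t}|_{J_t}$. Then $g$ is again a zero-entropy interval map, hence of type $2^\infty$, and $\omega_f(x)\cap J_t$ is still infinite. The core step is to produce a proper closed non-degenerate subinterval $J_{t+1} \subsetneq J_t$ which is $2$-periodic under $g$ and whose cycle still captures all but finitely many $g$-iterates of a suitable $f^k(x)\in J_t$. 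Iterating this doubling step yields the sequence with $p_t=2^t$, and the inclusion $\omega_f(x)\subseteq Q=\bigcap_t Q_t$ follows from Lemma~\ref{L:solenoidal-orbit-intersects-Qk}: each $Q_t$ is forward $f$-invariant with finite boundary, so the trajectory of $x$ eventually enters $Q_t$ and never leaves it.

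The main obstacle is the inductive doubling step. One must rule out, using horseshoe-freeness of zero-entropy interval maps, both the possibility that all recurrent $g$-orbits inside $J_t$ accumulate on $g$-fixed points (which would force $\omega_f(x)$ to be finite, contradicting the assumption) and the possibility that the next attracting structure inside $J_t$ is a cycle of subintervals of some period other than $2$; the latter is excluded by applying Misiurewicz's theorem to $g$, whose admissible periods are again powers of two. Turning these qualitative facts into the existence of a concrete non-degenerate $J_{t+1}\subsetneq J_t$ trapping the tail of the orbit is the combinatorial-topological content of Blokh's spectral decomposition, and is where the real work of the lemma lies.
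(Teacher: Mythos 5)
Your overall strategy (zero entropy $\Rightarrow$ type $2^\infty$ by Misiurewicz/Sharkovsky, then nested cycles of intervals with doubling periods) is exactly the route taken in the literature; note, though, that the paper does not prove this lemma at all --- it is quoted directly from Blokh's spectral decomposition (Theorems~5.4 and 4.1(d) of \cite{blokh1995spectral}), Sm\'{\i}tal \cite{smital1986chaotic}, and Ruette (Proposition~5.24 of \cite{ruette2017chaos}). Part~\eqref{IT:L:omega-finite} of your argument is fine and standard (a finite $\omega$-limit set of any continuous map is a single periodic orbit; zero entropy is not needed there).

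The problem is part~\eqref{IT:L:omega-solenoid}: the inductive doubling step is not proved, and you say yourself that it ``is where the real work of the lemma lies'' and that it amounts to ``the combinatorial-topological content of Blokh's spectral decomposition.'' That is precisely the statement to be established, so as a self-contained proof the proposal has a genuine gap at its core. Concretely, given $g=f^{2^t}|_{J_t}$ with $\omega_f(x)\cap J_t$ infinite, you must produce a non-degenerate closed $J_{t+1}\subsetneq J_t$ with $J_{t+1}\cap g(J_{t+1})=\emptyset$, $g^2(J_{t+1})=J_{t+1}$, and the tail of the $g$-orbit trapped in $J_{t+1}\cup g(J_{t+1})$; applying Misiurewicz's theorem to $g$ only restricts the possible periods of such a cycle \emph{if one exists} --- it does not produce one, and the hard point is existence, not the exclusion of periods other than $2$. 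Moreover, your auxiliary exclusion ``all recurrent $g$-orbits accumulate on $g$-fixed points would force $\omega_f(x)$ to be finite'' is unjustified and essentially circular: that an infinite $\omega$-limit set of a zero-entropy interval map contains no periodic points is itself part of the structure theory (equivalent to solenoidality) that the lemma asserts. If citing Blokh/Sm\'{\i}tal is allowed, your proposal collapses to the same citation the paper uses; if a self-contained argument was intended, the heart of it is missing.
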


\begin{lemma}\label{L:nonseparable}
  Let $f:I\to I$ have zero entropy and $x\in I$ be such that $\omega_f(x)$ is solenoidal.
  Then, for every $y\ne z$ from $\omega_f(x)$,
  $y$ and $z$ are non-separable if and only if
  there is $\alpha\in\SIGMAinf$
  such that $K_\alpha$ is non-degenerate and $y,z$ are the endpoints of $K_\alpha$.
\end{lemma}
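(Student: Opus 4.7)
The plan is to prove the two implications separately. The forward direction is short; the reverse direction is the substantive part, built on a clean geometric observation.

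For the forward direction, assume $y\ne z$ in $\omega_f(x)$ are non-separable. By Lemma~\ref{L:solenoidal-orbit-intersects-Qk} and $\omega_f(x)\subseteq Q\subseteq Q_t=\bigsqcup_{a\in\SIGMA^t}K_a$, both points lie in $Q_t$ for every $t$, and they cannot belong to distinct components $K_{a_t}\ne K_{b_t}$, as these would be disjoint $p_t$-periodic intervals separating them. Thus $y,z$ share a common component; the resulting nested sequence defines $\alpha\in\SIGMAinf$ with $a_t=\pi_t(\alpha)$ and $y,z\in K_\alpha$, so $K_\alpha$ is non-degenerate. Lemma~\ref{L:solenoidal-blokh}\eqref{IT:solenoidal-blokh} then gives $\{y,z\}\subseteq\omega_f(x)\cap K_\alpha\subseteq\partial K_\alpha=\{y_\alpha,z_\alpha\}$, whence $\{y,z\}=\{y_\alpha,z_\alpha\}$.

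For the reverse direction, assume $K_\alpha=[y,z]$ is non-degenerate with $y=y_\alpha$, $z=z_\alpha$, and suppose for contradiction that disjoint periodic intervals $J_y\ni y$, $J_z\ni z$ exist with periods $p_y,p_z$. A standard preliminary observation, using Brouwer applied to $f^{p_y}|_{J_y}$ together with the disjointness of the iterates $f^i(J_y)$ for $0\le i<p_y$, shows that $J_y$ contains an $f$-periodic point of exact period $p_y$, so by zero entropy $p_y$ is a power of $2$; the same holds for $p_z$. The key geometric claim is that $J_y$ extends strictly to the left of $y$. Indeed, otherwise $J_y\subseteq[y,1]$, and disjointness from $J_z\ni z$ forces $J_y\subseteq[y,z)\subseteq K_\alpha$; periodicity then yields $f^{p_y}(y)\in J_y\subseteq K_\alpha$, while $f^{p_y}(y)\in f^{p_y}(K_\alpha)\subseteq K_{\alpha+p_y}$ and $K_{\alpha+p_y}\cap K_\alpha=\emptyset$ because $\alpha+p_y\ne\alpha$ in the $2$-adic group $\SIGMAinf$, a contradiction. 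Writing $J_y=[a,b]$ with $a<y\le b<z$, we thus get $f^{p_y}(y)\in J_y\setminus K_\alpha=[a,y)$, so $f^{p_y}(y)<y$.

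Next I would show that the orbit of $x$ enters $J_y$. Because $\omega_f(x)$ is $f$-invariant, $f^{p_y}(y)\in\omega_f(x)\cap[a,y)\subseteq\omega_f(x)\cap J_y$ (in the generic case $f^{p_y}(y)>a$ this point lies in the interior of $J_y$; the corner case $f^{p_y}(y)=a$ is handled by iterating $f^{p_y}$ on $y$ and using perfectness of $C\subseteq\omega_f(x)$, together with $(y,z)\cap C\subseteq K_\alpha\cap C\setminus\{y,z\}=\emptyset$, to produce a $C$-point in $(a,y)\subseteq J_y$). The orbit of $x$ approaches this point and thereby enters $J_y$; by $p_y$-periodicity the orbit is then trapped in $A:=\bigcup_{i<p_y}f^i(J_y)$, and symmetrically in $B:=\bigcup_{j<p_z}f^j(J_z)$, so $\omega_f(x)\subseteq A\cap B$. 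In particular $z\in A$, giving $z\in f^{i_0}(J_y)$ for some $i_0\in(0,p_y)$. The interval $f^{i_0}(J_y)$ is $p_y$-periodic, disjoint from $J_y$, and contains $z$; applying the symmetric version of the geometric claim it extends strictly to the right of $z$, whence $f^{p_y}(z)\in f^{i_0}(J_y)\setminus K_\alpha$ satisfies $f^{p_y}(z)>z$.

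The final contradiction is then immediate: $f^{p_y}(K_\alpha)$ is a connected subset of $K_{\alpha+p_y}$ containing both $f^{p_y}(y)<y$ and $f^{p_y}(z)>z$, so it must contain the whole interval $[y,z]=K_\alpha$. Hence $K_\alpha\subseteq f^{p_y}(K_\alpha)\subseteq K_{\alpha+p_y}$, while $K_\alpha\cap K_{\alpha+p_y}=\emptyset$, contradicting the non-degeneracy of $K_\alpha$. The main technical obstacle in this plan is the orbit-enters-$J_y$ step, which requires the perfectness of $C$ and the absence of $\omega$-limit points of $x$ in $(y,z)$ to rule out the pathological configuration where $f^{p_y}(y)$ falls on the boundary of $J_y$.
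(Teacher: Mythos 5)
Your proposal takes a genuinely different route from the paper, which disposes of this lemma in one line by quoting \cite[Lemma~5.26]{ruette2017chaos} (non-separability of $y,z\in\omega_f(x)$ is equivalent to $y,z$ lying in a common $K_{a^{(t)}}$ for every $t$). Your forward direction is correct and is essentially that equivalence made explicit: distinct components of $Q_t$ are disjoint $p_t$-periodic intervals, so non-separable points share a component at every level, and Lemma~\ref{L:solenoidal-blokh}\eqref{IT:solenoidal-blokh} forces $\{y,z\}=\partial K_\alpha$. The overall strategy of your reverse direction --- exhibit an iterate $f^N$ with $f^N(y)<y$ and $f^N(z)>z$, contradicting $f^N(K_\alpha)\subseteq K_{\alpha+N}$ and $K_{\alpha+N}\cap K_\alpha=\emptyset$ --- is also sound.

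The middle of the reverse argument, however, has a genuine gap, exactly at the step you flag as the main obstacle. To put $z$ into a strict iterate $f^{i_0}(J_y)$ you need the orbit of $x$ to enter $J_y$, and in the corner case $f^{p_y}(y)=a$ you invoke perfectness of $C$ to produce a $C$-point in $(a,y)$. That tacitly assumes $y\in C$; at this stage Lemma~\ref{L:solenoidal-blokh} only gives $\emptyset\ne C\cap K_\alpha\subseteq\{y_\alpha,z_\alpha\}$, so a priori only $z$ might lie in $C$ (that \emph{both} endpoints lie in $C$ is Lemma~\ref{L:nonseparable-in-C}, proved later and resting on the present lemma, hence unavailable). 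A correct repair exists but is different: if $f^{kp_y}(y)=a$ for every $k\ge 1$, then $a$ is a periodic point lying in $K_{\alpha+p_y}\subseteq Q$, which is impossible since no point of $Q$ is periodic (a $P$-periodic $w\in K_\beta$ would give $w=f^P(w)\in K_{\beta+P}$ with $\beta+P\ne\beta$). A second, related weakness: you normalize $J_y=[a,b]$ closed, but the paper's definition of separability does not require closed intervals, and closedness is used essentially (to get $\omega_f(x)\subseteq A$ with $A$ closed, and to rule out $i_0=0$); in the configuration $\sup J_y=z$ your intermediate claim $f^{p_y}(z)>z$ is simply false, so the route breaks there. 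Both difficulties, and the whole orbit-trapping detour, disappear if you use $J_z$ symmetrically instead of hunting for $z$ inside an iterate of $J_y$: with $N=p_yp_z$ one has $f^N(y)\in J_y\setminus K_\alpha\subseteq[0,y)$ and $f^N(z)\in J_z\setminus K_\alpha\subseteq(z,1]$, and your final contradiction follows immediately; the Brouwer/power-of-two observation is never needed (you in fact never use it, since $\alpha+n\ne\alpha$ for every integer $n\ge1$).
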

\begin{proof}
  The lemma directly follows from \cite[Lemma~5.26]{ruette2017chaos},
  which asserts that distinct $y,z\in \omega_f(x)$ are non-separable
  if and only if for every $t\ge 1$ there is $a^{(t)}\in\SIGMA^t$ such that
  $y,z\in K_{a^{(t)}}$.
\end{proof}

\section{Correlation sum and recurrence determinism}\label{S:determinism}

\subsection{Correlation sum}
Let $(X,\varrho)$ be a compact metric space and $f:X\to X$ be a continuous map.
For $x\in X$, $\eps>0$, and $n\in\NNN$, the \emph{correlation sum} is defined by
$$
  C_\varrho(x,n,\eps)=\frac{1}{n^2} \cdot
    \card\{(i,j):\ 0\le i,j < n,\ \varrho(f^i(x),f^j(x))\le \eps\}.
$$
The \emph{lower} and \emph{upper asymptotic correlation sums} are
$$
 \cccl_\varrho(x,\eps) = \liminf_{n\to\infty} C_\varrho(x,n,\eps),
 \qquad
 \cccu_\varrho(x,\eps) = \limsup_{n\to\infty} C_\varrho(x,n,\eps).
$$
Note that (asymptotic) correlation sums are numbers from the unit interval $[0,1]$,
and are equal to $1$ for $\eps \ge\diam{X}$. Note also that (asymptotic) correlation sums
are non-decreasing functions of $\eps$.

Let $m\ge 1$ be an integer.
\emph{Bowen's metric} $\varrho_m=\varrho_m^f$ is given by
$$
  \varrho_m(x,y) = \max\{\varrho(f^i(x),f^i(y)):\ 0\le i < m\}
  \qquad
  \text{for every } x,y\in X.
$$
It is a metric on $X$ compatible with the topology of $X$, thus we may define (asymptotic)
correlation sum with respect to this metric. For abbreviation, we write
$C_m(x,n,\eps)$, $\cccl_m(x,\eps)$, and $\cccu_m(x,\eps)$ instead of
$C_{\varrho_m}(x,n,\eps)$, $\cccl_{\varrho_m}(x,\eps)$, and $\cccu_{\varrho_m}(x,\eps)$,
respectively.

Since $\varrho_m\le \varrho_{m+1}$ for every $m$, we have that
$C_m(x,n,\eps)$, $\cccl_m(x,\eps)$, and $\cccu_m(x,\eps)$
are non-increasing functions of $m$.
Denote the corresponding limits, as $m$ approaches infinity, by
$C_\infty(x,n,\eps)$, $\cccl_\infty(x,\eps)$, and $\cccu_\infty(x,\eps)$, respectively.
Note that $C_\infty(x,n,\eps)=C_{\varrho_\infty}(x,n,\eps)$, where
$\varrho_\infty$ is a metric given by
$\varrho_\infty(x,y) = \sup\{\varrho(f^i(x),f^i(y)):\ 0\le i < \infty\}$. In general,
however, the metric $\varrho_\infty$ need not be compatible with the topology of $X$;
e.g.~for an expansive system $(X,f)$ the metric $\varrho$ is always discrete.

The following is Lemma~8 from \cite{spitalsky2016local}.
\begin{lemma}\label{L:corr-sum-lower-bound}
  Let $X$ be a compact metric space and $\eps>0$. Then there is $\eta\in(0,1)$ such that,
  for every continuous map $f:X\to X$ and every $x\in X$,
  $$
    \cccu_m(x,\eps)\ge \cccl_m(x,\eps)\ge\eta^m
    \qquad
    \text{for every } m \in\NNN.
  $$
\end{lemma}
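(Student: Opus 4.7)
The plan is to bound $\cccl_m(x,\eps)$ from below by a counting argument based on a finite cover of $X$ by sets of small diameter, which works uniformly in the map $f$, the starting point $x$, and the scale $m$.

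First, using compactness, fix a finite cover $\mathcal{U}=\{U_1,\dots,U_N\}$ of $X$ by (open) sets with $\diam U_j \le \eps$ for each $j$; this requires only $X$ and $\eps$, not $f$. If $N=1$ then $\diam X\le \eps$ and $\cccl_m(x,\eps)=1$ trivially, so in this case any $\eta\in(0,1)$ works and we may assume $N\ge 2$ below. Set $\eta=1/N\in(0,1)$.

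Next, for every $m\in\NNN$, $n\in\NNN$, and $i\in[0,n)$, assign to the orbit point $f^i(x)$ its $m$-itinerary $w_i\in\{1,\dots,N\}^m$, defined by choosing, for each $k\in[0,m)$, some index $w_i(k)$ with $f^{i+k}(x)\in U_{w_i(k)}$ (ties broken arbitrarily). The key observation is that if $w_i=w_j$, then for every $k\in[0,m)$ both $f^{i+k}(x)$ and $f^{j+k}(x)$ lie in the same set of diameter $\le\eps$, hence
$$
  \varrho_m(f^i(x),f^j(x))
  = \max_{0\le k<m} \varrho(f^{i+k}(x),f^{j+k}(x))
  \le \eps.
$$
Thus every pair of orbit-indices sharing an itinerary contributes to the count defining $C_m(x,n,\eps)$.

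Now group the indices $0,1,\dots,n-1$ by itinerary: let $n_w$ be the number of $i\in[0,n)$ with $w_i=w$. Since there are at most $N^m$ possible itineraries and $\sum_w n_w=n$, the Cauchy--Schwarz inequality (applied to $N^m$ terms, padding with zeros) gives
$$
  \sum_w n_w^2 \ \ge\ \frac{n^2}{N^m}.
$$
The left-hand side counts ordered pairs $(i,j)\in[0,n)^2$ with $w_i=w_j$, each of which satisfies $\varrho_m(f^i(x),f^j(x))\le\eps$. Therefore $C_m(x,n,\eps)\ge 1/N^m=\eta^m$ for every $n$, and passing to the liminf yields $\cccl_m(x,\eps)\ge \eta^m$, as desired; the inequality $\cccu_m\ge\cccl_m$ is immediate from the definitions.

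I do not expect a substantial obstacle here: the argument is essentially a pigeonhole on finite itineraries, and the uniformity in $f$ and $x$ comes for free because the cover $\mathcal{U}$ and the constant $\eta=1/N$ depend only on $(X,\varrho)$ and $\eps$. The only mild subtlety is the degenerate case $\diam X\le\eps$, which is handled separately to ensure $\eta<1$.
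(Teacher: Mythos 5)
Your proof is correct: the covering of $X$ by $N$ sets of diameter at most $\eps$, the $m$-itinerary coding, and the Cauchy--Schwarz (pigeonhole) bound $\sum_w n_w^2\ge n^2/N^m$ together give $C_m(x,n,\eps)\ge N^{-m}$ uniformly in $f$, $x$, and $n$, which is exactly what is needed, and the degenerate case $\diam X\le\eps$ is handled properly. Note that the present paper does not prove this lemma at all --- it quotes it as Lemma~8 of \cite{spitalsky2016local} --- and your argument is the standard one for such uniform lower bounds on correlation sums, so there is nothing to add.
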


\subsection{Recurrence determinism}\label{SS:determinism}
For $x\in X$, $\eps>0$, $m\in\NNN\cup\{\infty\}$, and $n\ge 1$, define the
\emph{recurrence $m$-determinism} by
$$
  \rdet_m(x,n,\eps) = \frac{C_m(x,n,\eps)}{C_1(x,n,\eps)} \,,
$$
and the \emph{upper} and \emph{lower asymptotic recurrence $m$-determinism} by
$$
  \rdetl_m(x,\eps) = \liminf_{n\to\infty} \rdet_m(x,n,\eps),
  \qquad
  \rdetu_m(x,\eps) = \limsup_{n\to\infty} \rdet_m(x,n,\eps).
$$
If $\rdetl_m(x,\eps)=\rdetu_m(x,\eps)$ we say that the
\emph{recurrence $m$-determinism exist} and we denote the
common value by $\rdet_m(x,\eps)$; analogously for $\ccc_m(x,\eps)$.
\begin{remark}[RQA-determinism]
\label{R:RQA-det}
  Note that, for finite $m$,
  the definition of recurrence
  determinism slightly differs from that used in recurrence quantification analysis.
  Since here we deal with embedding dimension $1$,
  RQA-determinism is
  $\operatorname{DET}_m(x,n,\eps)=m\cdot\rdet_m(x,n,\eps) - (m-1)\cdot\rdet_{m+1}(x,n,\eps)$
  by \cite[Proposition~1]{grendar2013strong}, see also \cite[Theorem~2]{schultz2015approximation}.
  However, for $m=\infty$ it holds that $\operatorname{DET}_\infty(x,n,\eps)=\rdetinfty(x,n,\eps)$
  by \cite[Lemma~2.1]{majerova2016correlation}.
\end{remark}

\begin{remark}[Recurrence determinism as a conditional probability]
\label{R:conditional-probability}
  Asymptotic recurrence determinism is (for typical $x$ and $\eps$) equal to
  the conditional probability that the following $m$ states of the trajectory will be recurrences
  given that the current state is a recurrence; here, by a recurrence we mean that the
  distance of a state from some previous one is smaller than or equal to the precision $\eps$.
  To be more precise, take an ergodic measure $\mu$ of the system $(X,f)$.
  Then, by \cite{pesin1993rigorous}, for $\mu$-a.e.~$x\in X$ and for all but countably many $\eps>0$,
  asymptotic correlation sum exists and is equal to the \emph{correlation integral} of $\mu$
  $$
    \ccc_m(\mu,\eps)=\mu\times\mu\{(y,z)\in X\times X:\ \varrho_m(y,z)\le\eps\}.
  $$
  Thus if $Y,Z$ are independent $X$-valued random variables with distribution $\mu$, then
  asymptotic correlation sum $\ccc_m(x,\eps)$ is (typically) equal to the probability
  that $f^i(Y)$ and $f^i(Z)$ are $\eps$-close for every $i\in[0,m)$.
  Consequently, asymptotic recurrence determinism $\rdet_m(x,\eps)$
  is (typically) equal to the conditional probability
  that $f^i(Y)$ and $f^i(Z)$ are $\eps$-close for every $i\in[0,m)$,
  given that $Y$ and $Z$ are $\eps$-close.
  For more details see \cite{grendar2013strong}.
\end{remark}

Asymptotic correlation sum and asymptotic recurrence determinism do not depend on the beginning of
the trajectory, as is stated in the next lemma.
\begin{lemma}\label{L:c(fx)=c(x)}
  Let $(X,f)$ be a dynamical system, $x\in X$, $h\in\NNN$, $m\in\NNN\cup\{\infty\}$, and $\eps>0$.
  Then
  \begin{equation*}
    \cccu_m(f^h(x),\eps) = \cccu_m(x,\eps),
    \qquad
    \cccl_m(f^h(x),\eps) = \cccl_m(x,\eps),
  \end{equation*}
  and
  \begin{equation*}
    \rdetu_m(f^h(x),\eps) = \rdetu_m(x,\eps),
    \qquad
    \rdetl_m(f^h(x),\eps) = \rdetl_m(x,\eps).
  \end{equation*}
\end{lemma}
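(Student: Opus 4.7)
The plan is the following. I would first show that the counts defining $C_m(x,n,\eps)$ and $C_m(f^h(x),n,\eps)$ differ only on a boundary strip whose size grows linearly in $n$. After normalization by $n^2$ this gap is $O(h/n)$ and therefore vanishes as $n\to\infty$. This immediately yields the equalities for $\cccu_m$ and $\cccl_m$; combining the same estimate with Lemma~\ref{L:corr-sum-lower-bound} (used to bound the denominator $C_1$ away from zero) then gives the equalities for $\rdetu_m$ and $\rdetl_m$.

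For the correlation-sum step I would substitute $(i,j)\mapsto(i+h,j+h)$ in the definition to get
\[
  n^2\,C_m(f^h(x),n,\eps) = \card\{(i,j)\in [h,n+h)^2:\ \varrho_m(f^i(x),f^j(x))\le\eps\},
\]
while $n^2\,C_m(x,n,\eps)$ counts the same condition on $[0,n)^2$. Since the symmetric difference $[0,n)^2\,\triangle\,[h,n+h)^2$ consists of two strips of width $h$ and has at most $4nh$ elements (for $n\ge h$),
\[
  \abs{C_m(f^h(x),n,\eps) - C_m(x,n,\eps)} \le \frac{4h}{n}.
\]
Crucially, this bound depends neither on $m$ nor on the specific metric $\varrho_m$, so the same estimate applies verbatim with $\varrho_\infty$; taking $\liminf_n$ and $\limsup_n$ delivers the claims for $\cccu_m$ and $\cccl_m$ simultaneously for all $m\in\NNN\cup\{\infty\}$.

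For the determinism, Lemma~\ref{L:corr-sum-lower-bound} applied at $m=1$ produces $\eta=\eta(\eps)>0$ with $\cccl_1(y,\eps)\ge\eta$ for every $y\in X$, so for all $n$ large enough both $C_1(x,n,\eps)$ and $C_1(f^h(x),n,\eps)$ are at least $\eta/2$. Writing the difference of the two $\rdet_m$'s over the common denominator $C_1(f^h(x),n,\eps)\,C_1(x,n,\eps)$, and using the previous step to bound the numerator together with the trivial inequalities $C_m,C_1\le 1$, I would obtain
\[
  \abs{\rdet_m(f^h(x),n,\eps) - \rdet_m(x,n,\eps)} = O(h/n),
\]
which passes to $\liminf_n$ and $\limsup_n$ and completes the proof. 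There is no genuine obstacle here; the only mildly subtle point is the case $m=\infty$, but it requires no separate limiting argument in $m$, because the Step~1 count is uniform in $m$ and in the underlying metric.
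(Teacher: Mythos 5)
Your proof is correct and follows essentially the same route as the paper: the shift by $h$ changes the recurrence count only by a boundary region of size $O(hn)$, which is negligible after dividing by $n^2$, giving the equalities for $\cccu_m$ and $\cccl_m$ uniformly in $m$ (including $m=\infty$), and the determinism equalities then follow because Lemma~\ref{L:corr-sum-lower-bound} bounds the denominator $C_1$ away from zero. The only (harmless) difference is that you compare $C_m(f^h(x),n,\eps)$ with $C_m(x,n,\eps)$ at the same $n$, whereas the paper sandwiches it against $C_m(x,n+h,\eps)$; your write-up also makes explicit the ratio estimate that the paper leaves implicit.
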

\begin{proof}
  It suffices to apply the following inequalities for correlation
  sums, valid for every integer $n$:
  \begin{equation*}
    \left( \frac{n+h}{n} \right)^2   C_m(x,n+h,\eps) - \frac{2hn+h^2}{n^2}
    \ \le\
    C_m(f^h(x),n,\eps)
    \ \le\
    \left( \frac{n+h}{n} \right)^2   C_m(x,n+h,\eps)  \,.
  \end{equation*}
\end{proof}

The following result easily follows from Lemma~\ref{L:corr-sum-lower-bound}
and from basic properties of $\liminf$ and $\limsup$.
\begin{lemma}\label{L:rdet-and-c}
  Let $(X,f)$ be a dynamical system, $x\in X$, $m\in\NNN\cup\{\infty\}$, and $\eps>0$.
  Then $\cccl_1(x,\eps)>0$ and
  $$
    \frac{\cccl_m(x,\eps)}{\cccu_1(x,\eps)}
    \ \le\ 
    \rdetl_m(x,\eps)
    \ \le\
    \rdetu_m(x,\eps)
    \ \le\
    \frac{\cccu_m(x,\eps)}{\cccl_1(x,\eps)} \,.
  $$
\end{lemma}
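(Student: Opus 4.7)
The plan is straightforward: combine Lemma~\ref{L:corr-sum-lower-bound} with standard $\liminf$/$\limsup$ arithmetic applied to the ratio defining $\rdet_m(x,n,\eps)$.

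First I would verify $\cccl_1(x,\eps)>0$ by applying Lemma~\ref{L:corr-sum-lower-bound} with $m=1$: it supplies $\eta\in(0,1)$ with $\cccl_1(x,\eps)\ge\eta$, so in particular $\cccu_1(x,\eps)\ge\cccl_1(x,\eps)\ge\eta>0$. Thus both quotients appearing in the claimed chain are well-defined positive real numbers (with the convention $a/b=+\infty$ doing no harm if $\cccu_m(x,\eps)$ were $0$ in the lower bound, which it cannot be since $\cccu_m\ge\cccl_m\ge\eta^m>0$ for finite $m$ by the same lemma, while for $m=\infty$ only the nontriviality of the denominators matters).

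Next I would invoke the elementary arithmetic fact that, for any sequences $(a_n)$, $(b_n)$ of positive reals with $\liminf_n b_n>0$,
$$
\frac{\liminf_n a_n}{\limsup_n b_n}\ \le\ \liminf_n\frac{a_n}{b_n}\ \le\ \limsup_n\frac{a_n}{b_n}\ \le\ \frac{\limsup_n a_n}{\liminf_n b_n},
$$
applied with $a_n=C_m(x,n,\eps)$ and $b_n=C_1(x,n,\eps)$. By definition $a_n/b_n=\rdet_m(x,n,\eps)$, and the hypothesis $\liminf_n b_n=\cccl_1(x,\eps)>0$ is exactly what was established in the previous step. Taking $\liminf$ and $\limsup$ in $n$ then gives the outer two inequalities; the middle inequality $\rdetl_m(x,\eps)\le\rdetu_m(x,\eps)$ is just $\liminf\le\limsup$.

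The case $m=\infty$ requires no extra work: $C_\infty(x,n,\eps)=\lim_{m\to\infty}C_m(x,n,\eps)$ exists as a monotone limit, and $\rdetl_\infty$, $\rdetu_\infty$ are defined by the same $\liminf/\limsup$ in $n$ as for finite $m$, so the same quotient-of-limits argument applies verbatim. There is no genuine obstacle here: the lemma is essentially the statement of Lemma~\ref{L:corr-sum-lower-bound} filtered through the definition of $\rdet_m$, together with the $\liminf/\limsup$ quotient inequalities. If anything merits care, it is only keeping track that the lower bound on the denominator holds in the appropriate $\liminf$ sense, which we established at the outset.
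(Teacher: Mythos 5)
Your proposal is correct and follows exactly the route the paper intends: the paper states that the lemma ``easily follows from Lemma~\ref{L:corr-sum-lower-bound} and from basic properties of $\liminf$ and $\limsup$,'' which is precisely your argument (apply Lemma~\ref{L:corr-sum-lower-bound} with $m=1$ to get $\cccl_1(x,\eps)\ge\eta>0$, then use the $\liminf$/$\limsup$ quotient inequalities with $a_n=C_m(x,n,\eps)$, $b_n=C_1(x,n,\eps)$). Your handling of the case $m=\infty$ is also fine, since $\rdet_\infty(x,n,\eps)$ is defined by the same quotient with $C_\infty(x,n,\eps)$ in the numerator.
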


\section{Proofs of Theorems~\ref{T:main} and \ref{T:omega}}\label{S:proof-theorem}
Here we give proofs of Theorems~\ref{T:main} and \ref{T:omega}.
In Section~\ref{SS:finite-omega} we deal with the simplest case
of finite $\omega$-limit sets.
Some simple lemmas regarding solenoidal $\omega$-limit sets are given
in Section~\ref{SS:solenoidal-basic}.
The case when an $\omega$-limit set contains non-separable points is
described in Section~\ref{SS:solenoidal-with-nonseparable}.
Finally, in Sections~\ref{SS:solenoidal-without-nonsepar1} and
\ref{SS:solenoidal-without-nonsepar2} we deal with the remaining case
of solenoidal $\omega$-limit sets without non-separable points.

\subsection{Finite $\omega$-limit sets}\label{SS:finite-omega}

\begin{proposition}\label{P:finite-omega}
Let $(X,f)$ be a dynamical system and $x\in X$ have
$\omega_f(x)$ of finite cardinality $p$.
Then, for every sufficiently small $\eps>0$ and every $m\in\NNN\cup\{\infty\}$,
\begin{equation*}
 \ccc_m(x,\eps)=\frac 1p
 \qquad
 \text{and}
 \qquad
 \rdet_m(x,\eps)=1.
\end{equation*}
\end{proposition}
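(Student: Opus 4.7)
The plan is to show that, once $\eps$ is sufficiently small, the condition $\varrho_m(f^i(x), f^j(x)) \le \eps$ collapses to the combinatorial condition $i \equiv j \pmod p$, uniformly in $m \in \NNN \cup \{\infty\}$. Given this, the correlation sum is simply the asymptotic density of pairs of indices in $[0,n)^2$ that are congruent mod $p$, which is $1/p$, and determinism is the ratio of two such quantities.

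First, list the periodic orbit as $\omega_f(x) = \{y_0, \dots, y_{p-1}\}$ with $f(y_i) = y_{i+1 \bmod p}$, and set $\eta := \min_{i \ne j}\varrho(y_i,y_j) > 0$. By continuity of $f$ at each $y_i$, for any prescribed $\delta>0$ one can shrink $\delta$ further so that $f(B(y_i,\delta)) \subseteq B(y_{i+1 \bmod p},\delta)$ for every $i$; that is, the family $\{B(y_i,\delta)\}_{i=0}^{p-1}$ becomes forward invariant. Fix any $\eps \in (0,\eta/2)$ and choose such a $\delta$ with $\delta < \min\{\eps/2,(\eta-\eps)/2\}$. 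Because $f^n(x)$ accumulates only on $\omega_f(x)$, some iterate $f^{n_0}(x)$ lies in some $B(y_k,\delta)$; by Lemma~\ref{L:c(fx)=c(x)} I may replace $x$ by $f^{n_0}(x)$ and relabel so that $x \in B(y_0,\delta)$, in which case forward invariance gives $f^n(x) \in B(y_{n \bmod p},\delta)$ for every $n \ge 0$.

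Next I verify the key separation: for all $i,j \ge 0$ and all $m \in \NNN\cup\{\infty\}$, $\varrho_m(f^i(x),f^j(x)) \le \eps$ if and only if $i \equiv j \pmod p$. If $i \equiv j \pmod p$, then for every $l \ge 0$ both $f^{i+l}(x)$ and $f^{j+l}(x)$ sit in the common ball $B(y_{(i+l)\bmod p},\delta)$, giving $\varrho(f^{i+l}(x),f^{j+l}(x)) \le 2\delta \le \eps$; taking the supremum over $l$ yields $\varrho_m \le \eps$ even for $m=\infty$. If $i \not\equiv j \pmod p$, the $l=0$ term alone already satisfies $\varrho(f^i(x),f^j(x)) \ge \eta - 2\delta > \eps$, so $\varrho_m > \eps$.

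Finally, an elementary count gives $\card\{(i,j)\in[0,n)^2 : i\equiv j \pmod p\} = n^2/p + O(n)$, so $C_m(x,n,\eps) \to 1/p$ as $n\to\infty$ for every $m \in \NNN\cup\{\infty\}$. Hence $\ccc_m(x,\eps) = 1/p$ and $\rdet_m(x,\eps) = 1$. No deep obstacle is expected; the one point requiring care is arranging forward invariance of the $\delta$-balls, which is precisely what allows a single choice of $\eps$ to work uniformly over all $m$, including $m=\infty$, rather than forcing $\eps$ to depend on $m$.
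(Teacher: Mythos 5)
Your overall strategy is the same as the paper's: show that for small $\eps$ the condition $\varrho_m(f^i(x),f^j(x))\le\eps$ reduces to $i\equiv j\ (\mmod\ p)$ uniformly in $m$ (including $m=\infty$), then count. The separation dichotomy and the final density count are fine. But the step you yourself single out as "the one point requiring care" is in fact wrong as stated: it is \emph{not} true that continuity alone lets you shrink $\delta$ so that $f(B(y_i,\delta))\subseteq B(y_{i+1\bmod p},\delta)$ for every $i$. That is a Lyapunov-stability property of the periodic orbit, and the orbit need not be stable. Concretely, take the tent map $f(y)=1-\abs{2y-1}$ on $[0,1]$ and $x=1/2$, so that $x\mapsto 1\mapsto 0\mapsto 0\mapsto\cdots$ and $\omega_f(x)=\{0\}$ with $p=1$; near the repelling fixed point $0$ one has $f(y)=2y$, so $f(B(0,\delta))\supseteq[0,2\delta)\not\subseteq B(0,\delta)$ for every $\delta\in(0,1/2)$. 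Your induction "forward invariance gives $f^n(x)\in B(y_{n\bmod p},\delta)$ for every $n$" therefore has no valid base to propagate from; landing once in a ball does not keep you in the balls.

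The conclusion you want is nevertheless true, and this is exactly what the paper invokes when it says $x$ is "attracted by the $p$-cycle": if $\omega_f(x)$ is a periodic orbit of period $p$, then $\omega_{f^p}(x)$ is a single fixed point of $f^p$ (a finite internally chain transitive set of fixed points is a singleton), hence $f^{np+i}(x)\to y_i$ uniformly in $i\in[0,p)$. Alternatively, you can repair your argument without that fact by using more of what you already observed: the \emph{entire tail} of the trajectory lies in $\bigcup_i B(y_i,\delta)$ (not just one iterate), because a trajectory eventually stays in any neighborhood of its $\omega$-limit set; then choose $\delta<\eta/3$ small enough that $f(B(y_i,\delta))\subseteq B(y_{i+1\bmod p},\eta/3)$, so that the image of a tail point in $B(y_i,\delta)$, being simultaneously in $B(y_{i+1},\eta/3)$ and in $\bigcup_j B(y_j,\delta)$, must lie in $B(y_{i+1},\delta)$. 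With either repair the rest of your proof goes through unchanged.
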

\begin{proof}
We may assume that $m$ is finite.
Write $x_i=f^i(x)$ for $i\ge 0$ and
$\omega_f(x)=\{y_0,y_1,\dots,y_{p-1}\}$, where
$f^k(y_0)=y_{k\mmod p}$ for every $k\ge 0$.
Take arbitrary $\eps>0$ such that $\varrho(y_k,y_l)\ge 2\eps$ for every $k\ne l$.
Since $x$ is attracted by the $p$-cycle $(y_0,y_1,\dots,y_{p-1})$,
there is $i_0$ such that
$\varrho(x_{i_0+i},y_{i \mmod p})<\eps/2$ for every $i\ge 0$.
By Lemma~\ref{L:c(fx)=c(x)} we may assume that $i_0=0$.
Hence, for $i,j\ge 0$
with $i\equiv j (\mmod p)$ we have
$$
 \varrho(x_{i},x_{j})
 \le
 \varrho(x_{i},y_{i\mmod p}) + \varrho(y_{i\mmod p}, y_{j\mmod p}) + \varrho(y_{j\mmod p}, x_{j})
 <\eps,
$$
and for $i,j\ge 0$ with $i\not\equiv j (\mmod p)$ we have
$$
 \varrho(x_{i},x_{j})
 \ge
 \varrho(y_{i\mmod p}, y_{j\mmod p}) - \varrho(x_{i},y_{i\mmod p}) - \varrho(y_{j\mmod p}, x_{j})
 >2\eps - \eps = \eps.
$$
That is, for any $i,j\ge 0$, $\varrho_m(x_i,x_j)\le\eps$ if and only $i\equiv j (\mmod p)$.
Now the assertion easily follows.
\end{proof}

\subsection{Solenoidal $\omega$-limit sets --- basic facts}\label{SS:solenoidal-basic}
Till the end of Section~\ref{S:proof-theorem} we will use the notation from
Section~\ref{SS:solenoidal-omega-limit}. Fix a map $f:I\to I$ and a point $x\in I$,
and put $x_i=f^i(x)$ for every $i\ge 0$.
Take $m\in\NNN\cup\{\infty\}$ and $t\ge 0$. For every $a,b\in\SIGMA^t$ put
\begin{equation*}
  \dist_m(K_a,K_b) = \max_{i\in[0,m)} \dist(K_{a+i},K_{b+i}),
  \qquad
  \diam_m(K_a,K_b) = \max_{i\in[0,m)} \diam(K_{a+i}\cup K_{b+i}).
\end{equation*}
Note that, due to periodicity of intervals $K_a$, $\dist_m=\dist_{p_t}$ and $\diam_m=\diam_{p_t}$
for every $m\ge p_t$.
For $\eps>0$ define
\begin{eqnarray*}
  N_m(x,t,\eps) &=& \card\{(a,b)\in \SIGMA^t\times\SIGMA^t\colon \dist_m(K_a,K_b) < \eps\},
  \\
  N_m^\circ(x,t,\eps) &=& \card\{(a,b)\in \SIGMA^t\times\SIGMA^t\colon \diam_m(K_a, K_b) \le \eps\}.
\end{eqnarray*}

\begin{lemma}\label{L:corr-integ-and-intervals}
  Let $f:I\to I$ be continuous and $x\in I$ be such that $\omega_f(x)$ is solenoidal.
  Then, for every $m\in\NNN\cup\{\infty\}$, $t\ge 0$, and $\eps>0$,
  \begin{enumerate}
    \item\label{IT:L:corr-integ-and-intervals:1}
      $N_m(x,t,\eps)\ge N_{m+1}(x,t,\eps)$ and $N_m^{\circ}(x,t,\eps)\ge N_{m+1}^{\circ}(x,t,\eps)$;
    \item\label{IT:L:corr-integ-and-intervals:2}
      $N_m(x,t,\eps)=N_{p_t}(x,t,\eps)$ and $N_m^\circ(x,t,\eps)=N_{p_t}^\circ(x,t,\eps)$
      if $m\ge p_t$;
    \item\label{IT:L:corr-integ-and-intervals:3}
      $N_m^\circ(x,t,\eps) \le N_m(x,t,\eps)$;
    \item\label{IT:L:corr-integ-and-intervals:4}
      $N_m^\circ(x,t,\eps)\cdot p_t^{-2} \le \cccl_m(x,\eps)\le \cccu_m(x,\eps)
       \le N_m(x,t,\eps)\cdot p_t^{-2}$;
    \item\label{IT:L:corr-integ-and-intervals:5}
      if $N_1^\circ(x,t,\eps)>0$ then
      $$
        \frac{N_m^\circ(x,t,\eps)}{N_1(x,t,\eps)}
        \le
        \rdetl_m(x,\eps)
        \le
        \rdetu_m(x,\eps)
        \le
        \frac{N_m(x,t,\eps)}{N_1^\circ(x,t,\eps)}\,.
      $$
  \end{enumerate}
\end{lemma}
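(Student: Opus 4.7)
The plan is to handle parts (1), (2), (3) directly from the definitions, then prove (4) by reducing the correlation sum to a combinatorial count on $\SIGMA^t \times \SIGMA^t$ via the partition of $Q_t$, and finally derive (5) by dividing the bounds of (4).

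For (1), both $\dist_m(K_a,K_b)$ and $\diam_m(K_a,K_b)$ are non-decreasing in $m$ because they are maxima over the expanding index range $[0,m)$; hence the sets of pairs $(a,b)$ meeting the defining inequalities only shrink. For (2), $p_t$-periodicity of the intervals gives $K_{a+i}=K_{a+(i\bmod p_t)}$, so the maxima over $i\in[0,m)$ are already attained at $m=p_t$; this also makes the $m=\infty$ version meaningful via $N_\infty=N_{p_t}$. For (3), for every $a,b\in\SIGMA^t$ (including $a=b$) the intervals $K_{a+i}$ and $K_{b+i}$ are nondegenerate and pairwise disjoint when distinct, so $\diam(K_{a+i}\cup K_{b+i})>\dist(K_{a+i},K_{b+i})$ holds strictly for every $i$; thus $\diam_m\le\eps$ implies $\dist_m<\eps$.

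The main step is (4). By Lemma~\ref{L:solenoidal-orbit-intersects-Qk}, there is $i_0$ with $x_i\in Q_t$ for all $i\ge i_0$, and by Lemma~\ref{L:c(fx)=c(x)} we may replace $x$ by $f^{i_0}(x)$ and assume $i_0=0$. Since $f$ maps $K_a$ into $K_{a+1}$ (in the group $\SIGMA^t$), there is a fixed $a_0\in\SIGMA^t$ with $x_i\in K_{a_0+i}$ for all $i\ge 0$. For every $i,j$ we then have the sandwich
$$
  \dist_m(K_{a_0+i},K_{a_0+j}) \;\le\; \varrho_m(x_i,x_j) \;\le\; \diam_m(K_{a_0+i},K_{a_0+j}),
$$
so a pair $(i,j)$ with $\diam_m(K_{a_0+i},K_{a_0+j})\le\eps$ always contributes to $n^2 C_m(x,n,\eps)$, while $\varrho_m(x_i,x_j)\le\eps$ forces $\dist_m(K_{a_0+i},K_{a_0+j})\le\eps$. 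For each fixed pair $(a,b)\in\SIGMA^t\times\SIGMA^t$, the number of indices $0\le i<n$ with $a_0+i=a$ lies in $\{\lfloor n/p_t\rfloor,\lceil n/p_t\rceil\}$, so summing over pairs yields
$$
  N_m^\circ(x,t,\eps)\,\lfloor n/p_t\rfloor^2 \;\le\; n^2 C_m(x,n,\eps) \;\le\; N_m(x,t,\eps)\,\lceil n/p_t\rceil^2 + R_n,
$$
where $R_n=o(n^2)$ absorbs the (finitely many) boundary pairs with $\dist_m(K_a,K_b)=\eps$; for these, the sharp inequality $\diam_m>\dist_m$ from (3) combined with the fact that a strict inequality in the right-hand sandwich holds for all but finitely many indices $k\in[0,m)$ (when $m<\infty$), or an approximation by the stable value $N_{p_t}$ (when $m=\infty$), lets us absorb them. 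Dividing by $n^2$ and passing to $\liminf$ and $\limsup$ gives the required bound $N_m^\circ/p_t^2\le\cccl_m\le\cccu_m\le N_m/p_t^2$.

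Part (5) is a direct corollary: $\rdet_m(x,n,\eps)=C_m(x,n,\eps)/C_1(x,n,\eps)$, and the hypothesis $N_1^\circ(x,t,\eps)>0$ together with (4) gives $\cccl_1(x,\eps)\ge N_1^\circ(x,t,\eps)/p_t^2>0$, so the elementary inequalities $\limsup(A_n/B_n)\le\limsup A_n/\liminf B_n$ and $\liminf(A_n/B_n)\ge\liminf A_n/\limsup B_n$ apply and yield
$$
  \frac{N_m^\circ(x,t,\eps)}{N_1(x,t,\eps)}\le\rdetl_m(x,\eps)\le\rdetu_m(x,\eps)\le\frac{N_m(x,t,\eps)}{N_1^\circ(x,t,\eps)}.
$$
The main obstacle is the careful bookkeeping of boundary pairs in (4), i.e.\ reconciling the strict inequality used in the definition of $N_m$ with the non-strict one in $C_m$; this is handled through the strict separation $\diam_m>\dist_m$ from (3) applied to the finitely many exceptional pairs.
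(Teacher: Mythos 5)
Parts (1)--(3) and (5) of your sketch are fine and essentially match the paper. The gap is in (4), specifically in the upper bound $\cccu_m(x,\eps)\le N_m(x,t,\eps)\,p_t^{-2}$. An index pair $(i,j)$ with $\varrho_m(x_i,x_j)\le\eps$ only yields $\dist_m(K_{a_0+i},K_{a_0+j})\le\eps$, whereas $N_m$ counts interval pairs with the \emph{strict} inequality $\dist_m<\eps$. You propose to absorb the interval pairs with $\dist_m(K_a,K_b)=\eps$ into an error term $R_n=o(n^2)$, calling them ``finitely many''; but each such interval pair $(a,b)$ is hit by roughly $(n/p_t)^2$ index pairs $(i,j)$, a positive proportion of $n^2$, so the finiteness of the set of interval pairs does not give $o(n^2)$ by itself. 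The mechanism you invoke --- the strict inequality $\diam_m>\dist_m$ from (3), or strictness ``in the right-hand sandwich for all but finitely many $k$'' --- concerns the comparison with $\diam_m$ and does not show that those index pairs fail the condition $\varrho_m(x_i,x_j)\le\eps$; as written, the bound $n^2C_m(x,n,\eps)\le N_m\lceil n/p_t\rceil^2+o(n^2)$ is unproved.

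What is needed is strictness on the \emph{left} of the sandwich, $\dist_m(K_{a_0+i},K_{a_0+j})<\varrho_m(x_i,x_j)$ for $i\ne j$, and this is exactly how the paper proceeds: since $\omega_f(x)$ is infinite, $x$ is not eventually periodic, so each of the finitely many boundary points of the intervals $K_a$, $a\in\SIGMA^t$, is visited by the orbit at most once; hence, after discarding a finite initial segment (harmless by Lemma~\ref{L:c(fx)=c(x)}), one may assume the whole orbit lies in the interiors of the $K_a$'s. Then $\varrho_m(x_i,x_j)\le\eps$ forces $\dist_m<\eps$, every contributing pair is counted by $N_m$, and no error term is needed. (Alternatively, your $R_n$ could be rescued by noting that equality $\varrho_m(x_i,x_j)=\eps=\dist_m(K_a,K_b)$ pins $f^k(x_i)$ and $f^k(x_j)$ to specific endpoints at a critical coordinate $k$, and injectivity of the orbit then allows at most $p_t^2$ such index pairs per interval pair; but this argument is absent from your sketch.) Everything else --- the reduction to counting on $\SIGMA^t\times\SIGMA^t$, the $\lfloor n/p_t\rfloor$ bookkeeping, and the derivation of (5) from (4) --- coincides with the paper's argument.
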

\begin{proof}
  Properties \eqref{IT:L:corr-integ-and-intervals:1}, \eqref{IT:L:corr-integ-and-intervals:2}
  are trivial and \eqref{IT:L:corr-integ-and-intervals:3} follows from the fact that every
  $K_a$ ($a\in\SIGMA^t$) is non-degenerate.
  To prove (\ref{IT:L:corr-integ-and-intervals:4}), by Lemma~\ref{L:c(fx)=c(x)} and the fact that
  $\omega_f(x)$ is infinite we may assume that
  $x\in K_{0^t}$ and that the orbit of $x$ does not intersect
  the boundary of any $K_a$ ($a\in\SIGMA^t$).
  Thus $x_i\in K_{0^t+i}$ for every $i$, and
  $$
    \dist_m(K_{0^t+i},K_{0^t+j})
    <
    \varrho_m(x_i,x_j)
    <
    \diam_m(K_{0^t+i},K_{0^t+j})
    \qquad\text{for every }i\ne j.
  $$
  Since intervals $K_a$ ($a\in\SIGMA^t$) are $p_t$-periodic, we have
  $$
    \frac{k_n^2}{(k_n+1)^2 \cdot p_t^2} \cdot N_m^\circ(x,t,\eps)
    <
    C_m(x,n,\eps)
    <
    \frac{(k_n+1)^2}{k_n^2 \cdot p_t^2} \cdot N_m(x,t,\eps)
  $$
  for every $n\ge p_t$, where $k_n=\lfloor n/p_t\rfloor$.
  A passage to the limit $n\to\infty$ gives \eqref{IT:L:corr-integ-and-intervals:4}.

  Since \eqref{IT:L:corr-integ-and-intervals:5} follows from
  \eqref{IT:L:corr-integ-and-intervals:4}
  and Lemma~\ref{L:rdet-and-c}, the proof is finished.
\end{proof}

The following proposition states that, for zero entropy interval maps,
asymptotic correlation sum $\ccc_\infty$ can distinguish points with finite $\omega$-limit set
from those with infinite one.

\begin{proposition}\label{P:finite-omega-and-cinfty}
  Let $f:I\to I$ be a continuous map with zero entropy and $x\in I$.
  Then  the following conditions are equivalent:
  \begin{enumerate}
    \item\label{IT:P:finite-omega-and-cinfty:1}
      $\omega_f(x)$ is finite;
    \item\label{IT:P:finite-omega-and-cinfty:2}
      there is $p\in\NNN$ such that $\ccc_\infty(x,\eps)=1/p$ for every sufficiently small $\eps>0$;
    \item\label{IT:P:finite-omega-and-cinfty:3}
      $\liminf_{\eps\to 0} \cccl_\infty(x,\eps)>0$;
    \item\label{IT:P:finite-omega-and-cinfty:4}
      $\limsup_{\eps\to 0} \cccu_\infty(x,\eps)>0$.
  \end{enumerate}
\end{proposition}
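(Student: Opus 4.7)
The plan is to establish the cycle $(1)\Rightarrow(2)\Rightarrow(3)\Rightarrow(4)\Rightarrow(1)$. The first three implications are essentially immediate: $(1)\Rightarrow(2)$ is Proposition~\ref{P:finite-omega} applied at $m=\infty$; $(2)\Rightarrow(3)$ holds because $\cccl_\infty(x,\eps)=\cccu_\infty(x,\eps)=1/p>0$ for all sufficiently small $\eps$; and $(3)\Rightarrow(4)$ is just $\cccu_\infty\ge\cccl_\infty$. All the work is in $(4)\Rightarrow(1)$.

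I will prove $(4)\Rightarrow(1)$ by contraposition: assume $\omega_f(x)$ is infinite and show $\limsup_{\eps\to 0}\cccu_\infty(x,\eps)=0$. Since $f$ has zero entropy, Lemma~\ref{L:omega-limits-for-zero-entropy} forces $\omega_f(x)$ to be solenoidal, so I can import the notation of Section~\ref{SS:solenoidal-omega-limit}; in particular $p_t\to\infty$. Fix $t\ge 0$. Parts~\eqref{IT:L:corr-integ-and-intervals:2} and \eqref{IT:L:corr-integ-and-intervals:4} of Lemma~\ref{L:corr-integ-and-intervals} give
\[
\cccu_\infty(x,\eps)\ \le\ \frac{N_\infty(x,t,\eps)}{p_t^2}\ =\ \frac{N_{p_t}(x,t,\eps)}{p_t^2}.
\]
The group $\SIGMA^t$ is cyclic of order $p_t$, generated by the element identified with $1$, so for each $a\in\SIGMA^t$ the map $i\mapsto a+i$ is a bijection of $\{0,1,\dots,p_t-1\}$ onto $\SIGMA^t$. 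Consequently
\[
\dist_{p_t}(K_a,K_b)\ =\ \max_{c\in\SIGMA^t}\dist(K_c,K_{c+(b-a)})\ =:\ D_t(b-a)
\]
depends on the pair $(a,b)$ only through the difference $d:=b-a$, and grouping pairs by their difference converts the upper bound into
\[
\cccu_\infty(x,\eps)\ \le\ \frac{\card\{d\in\SIGMA^t:\,D_t(d)<\eps\}}{p_t}.
\]

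The decisive step is then purely geometric: the intervals $K_a$ $(a\in\SIGMA^t)$ are pairwise \emph{disjoint closed} subintervals of $I$, and any two disjoint closed intervals on the real line lie at strictly positive distance from each other. Hence $D_t(d)\ge\dist(K_{0^t},K_d)>0$ for every $d\ne 0$, so the finite set $\{d:\,D_t(d)<\eps\}$ decreases to the singleton $\{0\}$ as $\eps\to 0^+$. Taking the limsup yields $\limsup_{\eps\to 0}\cccu_\infty(x,\eps)\le 1/p_t$, and letting $t\to\infty$ (so that $p_t\to\infty$) forces $\limsup_{\eps\to 0}\cccu_\infty(x,\eps)=0$, contradicting~(4). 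The main obstacle is really just correctly exploiting the cyclic $\SIGMA^t$-action, so that $\dist_{p_t}(K_a,K_b)$ reduces to a function of $b-a$; once that reduction is in place, the rest of the argument collapses to the elementary fact that disjoint closed real intervals are at positive distance, together with $p_t\to\infty$.
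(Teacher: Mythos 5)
Your proof is correct and takes essentially the same route as the paper's: contraposition using the solenoidal structure from Lemma~\ref{L:omega-limits-for-zero-entropy}, the bound $\cccu_\infty(x,\eps)\le N_\infty(x,t,\eps)\cdot p_t^{-2}$ from Lemma~\ref{L:corr-integ-and-intervals}\eqref{IT:L:corr-integ-and-intervals:4}, and the observation that for $\eps$ below the positive minimal gap at level $t$ only diagonal pairs survive, yielding the bound $1/p_t\to 0$. Your intermediate reduction to difference classes via the cyclic group structure of $\SIGMA^t$ is valid but superfluous: the paper simply takes $\eps\le\min_{a\ne b}\dist(K_a,K_b)$, which already eliminates all off-diagonal pairs at $m=1$.
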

\begin{proof}
  The fact that \eqref{IT:P:finite-omega-and-cinfty:1} implies \eqref{IT:P:finite-omega-and-cinfty:2}
  was proved in Proposition~\ref{P:finite-omega}, and
  \eqref{IT:P:finite-omega-and-cinfty:2}$\implies$\eqref{IT:P:finite-omega-and-cinfty:3}
  $\implies$\eqref{IT:P:finite-omega-and-cinfty:4}
  is obvious.
  To prove \eqref{IT:P:finite-omega-and-cinfty:4}$\implies$\eqref{IT:P:finite-omega-and-cinfty:1}
  assume that $\omega_f(x)$ is infinite, hence solenoidal with $p_t=2^t$
  by Lemma~\ref{L:omega-limits-for-zero-entropy}.
  Take any $t\in\NNN$ and $0<\eps\le\min\{\dist(K_a,K_b):\ a,b\in\SIGMA^t,\ a\ne b\}$.
  Then $\cccu_\infty(x,\eps)\le 2^{-t}$ by
  Lemma~\ref{L:corr-integ-and-intervals}\eqref{IT:L:corr-integ-and-intervals:4}.
  Since $t$ is arbitrary, we have $\lim_\eps \cccu_\infty(x,\eps)=0$.
\end{proof}

\subsection{Solenoidal $\omega$-limit sets with non-separable points}
\label{SS:solenoidal-with-nonseparable}
The following is related to \cite[Lemma~4.1]{smital1986chaotic}.

\begin{lemma}\label{L:solenoid-noLYC-non_isolated}
  Let $f:I\to I$ be continuous and $x\in I$ be such that $\omega_f(x)$ is solenoidal.
  Let $\alpha\in \SIGMAinf$ be such that $K_\alpha=[y_\alpha,z_\alpha]$
  is non-degenerate and both endpoints $y_\alpha,z_\alpha$ belong to $\omega_f(x)$.
  Then for every $s\in\NNN$ there are $t=t^{(s)}>s$ and $b=b^{(s)},c=c^{(s)}\in \SIGMA^t$
  such that
  $$
    b_{[0,s)}=c_{[0,s)}=\alpha_{[0,s)}
    \qquad\text{and}\qquad
    K_b < K_{\alpha{[0,t)}} < K_c.
  $$
  Moreover, $t^{(s)} < t^{(s+1)}$ for every $s$.
\end{lemma}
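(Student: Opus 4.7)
The plan is to exhibit both endpoints $y_\alpha$ and $z_\alpha$ as elements of the Cantor set $C$ that are accumulated in $C$ only from outside $K_\alpha$, and then to read off the required words $b,c$ from $C$-points lying just outside $K_\alpha$ but inside $K_{\alpha[0,s)}$.

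First I would verify that $y_\alpha$ (and symmetrically $z_\alpha$) is non-periodic. Since $y_\alpha\in K_\alpha\subseteq Q$, Lemma~\ref{L:solenoidal-blokh}\eqref{IT:solenoidal-blokh-omegay} gives $\omega_f(y_\alpha)=C$, which is an infinite Cantor set, forcing the forward orbit of $y_\alpha$ to be infinite. Because the endpoints of the $p_t$-periodic interval $K_{\alpha[0,t)}$ are themselves periodic points of period dividing $2p_t$, the non-periodicity of $y_\alpha,z_\alpha$ places both strictly in the interior of every $K_{\alpha[0,t)}$. Writing $K_{\alpha[0,s)}=[y_{\alpha[0,s)},z_{\alpha[0,s)}]$, this yields $y_{\alpha[0,s)}<y_\alpha$ and $z_\alpha<z_{\alpha[0,s)}$, leaving genuine room to the left of $y_\alpha$ and to the right of $z_\alpha$ inside $K_{\alpha[0,s)}$.

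Next I would show that $y_\alpha,z_\alpha\in C$. Using $\omega_f(x)\subseteq Q\cap\Omega(f)$ from Lemma~\ref{L:solenoidal-blokh}\eqref{IT:solenoidal-blokh} together with the standard interval-map fact that isolated non-wandering points are periodic, the non-periodic points $y_\alpha,z_\alpha\in\omega_f(x)$ cannot be isolated in $Q\cap\Omega(f)$ and so must lie in $C$. Because $C\cap K_\alpha\subseteq\{y_\alpha,z_\alpha\}$, no $C$-point other than $y_\alpha$ itself can sit close to $y_\alpha$ from inside $K_\alpha$ (the only other candidate $z_\alpha$ being at fixed distance $|K_\alpha|>0$), so perfectness of $C$ forces $C$-points to accumulate on $y_\alpha$ from the left; symmetrically $C$-points accumulate on $z_\alpha$ from the right. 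Choosing $\delta<\min(y_\alpha-y_{\alpha[0,s)},\,z_{\alpha[0,s)}-z_\alpha)$ I obtain $y'\in C\cap(y_\alpha-\delta,y_\alpha)$ and $z'\in C\cap(z_\alpha,z_\alpha+\delta)$, both inside $K_{\alpha[0,s)}$. The point $y'$ sits in a unique $K_\beta$ with $\beta\in\SIGMAinf$; since $y'\in K_{\alpha[0,s)}$ but $y'\neq y_\alpha$ and $y'\notin K_\alpha$, we have $\beta[0,s)=\alpha[0,s)$ and $\beta\neq\alpha$. Letting $t_L>s$ be the least integer with $\beta[0,t_L)\neq\alpha[0,t_L)$ and setting $b:=\beta[0,t_L)$, disjointness of distinct level-$t_L$ intervals together with $y'<y_\alpha$ forces $K_b<K_{\alpha[0,t_L)}$. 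A mirror argument from $z'$ produces $t_R>s$ and $c\in\SIGMA^{t_R}$ with $K_{\alpha[0,t_R)}<K_c$. Passing to the common level $t:=\max(t_L,t_R)$ by refining $b,c$ to the unique length-$t$ extensions still containing $y',z'$ preserves both orderings. The ``moreover'' clause is enforced inductively: at the step $s\mapsto s+1$ the freedom to shrink $\delta$ — equivalently, to pick $y',z'$ so close to $y_\alpha,z_\alpha$ that they lie in $K_{\alpha[0,t^{(s)})}$ — pushes $t_L,t_R$ past $t^{(s)}$, so $t^{(s+1)}>t^{(s)}$.

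The main obstacle is Step 2, namely the inclusion $y_\alpha,z_\alpha\in C$. Without the fact that non-periodic points of $\omega_f(x)$ cannot be isolated in $Q\cap\Omega(f)$, $y_\alpha$ could a priori sit in $\omega_f(x)\setminus C$, and then the Cantor-perfectness argument producing a $C$-point on the correct side of $y_\alpha$ would collapse. Once this inclusion is secured, the rest of the proof is a transparent combination of perfectness of $C$ with the tree structure of the nested intervals $K_a$.
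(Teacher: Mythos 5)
Your overall strategy---producing points of the invariant structure just to the left of $y_\alpha$ and just to the right of $z_\alpha$ inside $K_{\alpha[0,s)}$ and reading off $b,c$ from their addresses, then forcing $t^{(s+1)}>t^{(s)}$ by choosing the approximating points inside $K_{\alpha[0,t^{(s)})}$---is sound, and this combinatorial half agrees with the paper's proof. The gap is exactly where you flag it, in Step 2 ($y_\alpha,z_\alpha\in C$), and both supporting claims fail as stated. First, the endpoints of a $p_t$-periodic interval need not be periodic points: $f^{p_t}$ maps $K_{\alpha[0,t)}$ onto itself, but nothing forces it to send endpoints to endpoints, so the ``period dividing $2p_t$'' assertion is unjustified (and false in general). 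Second, membership in $C$ means being a limit point of $Q\cap\Omega(f)$, i.e.\ non-isolation in the \emph{relative} set $Q\cap\Omega(f)$, whereas the fact you invoke (isolated non-wandering points are periodic)---which is not among the paper's lemmas and would itself need a proof or reference---concerns isolation in $\Omega(f)$. These are genuinely different: for the $2^\infty$-type maps in question, periodic orbits typically sit in the gaps $K_a\setminus(K_{a0}\sqcup K_{a1})$ and accumulate on $Q$, so $y_\alpha$ can be non-isolated in $\Omega(f)$ for reasons that say nothing about $Q\cap\Omega(f)$, and the contradiction you aim for does not materialize. Note also that the paper obtains $y_\alpha,z_\alpha\in C$ (Lemma~\ref{L:nonseparable-in-C}) as a \emph{consequence} of the present lemma, so the statement you want as input is, in the paper's development, an output; using it requires an independent proof, which is precisely what is missing.

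The repair is to argue directly with $\omega_f(x)$, as the paper does: $y_\alpha$ and $z_\alpha$ are not isolated in the infinite set $\omega_f(x)$; since $\omega_f(x)\cap K_\alpha\subseteq\partial K_\alpha$ by Lemma~\ref{L:solenoidal-blokh} and the interior of $K_\alpha$ is wandering, the points of $\omega_f(x)$ (and hence trajectory points) close to $y_\alpha$ and distinct from it lie strictly to the left of $y_\alpha$, and symmetrically strictly to the right of $z_\alpha$. Such points lie in $Q_s$ and, being close to $y_\alpha$, eventually in $K_{\alpha[0,s)}$; this at the same time delivers the strict interiority $y_{\alpha[0,s)}<y_\alpha<z_\alpha<z_{\alpha[0,s)}$ that you tried to extract from periodicity of endpoints. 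With approach points secured this way, your address-reading argument (least level of disagreement, common level $t$, monotonicity of $t^{(s)}$) goes through essentially unchanged.
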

\begin{proof}
  Put $t^{(0)}=0$. Fix an integer $s\ge 1$ and assume that $t^{(s-1)}$ has been defined.
  Since $\omega_f(x)$ is infinite, $y_\alpha$ and $z_\alpha$ are not isolated points of
  $\omega_f(x)$. Moreover, the interior $(y_\alpha,z_\alpha)$ of $K_\alpha$ is wandering
  \cite[Corollary~3.2(2)]{blokh1995spectral}.
  Thus there are increasing sequences $(n_k)_k$ and $(m_k)_k$ of integers such that
  $f^{n_k}(x) \nearrow y_\alpha$ and $f^{m_k}(x) \searrow z_\alpha$; we may assume that
  $f^{n_k}(x),f^{m_k}(x)\in Q_s$ for all sufficiently large $k$. This, together with
  $y_\alpha,z_\alpha\in K_{\alpha[0,s)}$, implies that there is $k_0$ such that
  $f^{n_k}(x),f^{m_k}(x)\in K_{\alpha[0,s)}$ for every $k\ge k_0$.

  Fix any $k\ge k_0$.
  Since $f^{n_k}(x)<y_\alpha$, there is $t_0 > s$ such that
  $f^{n_k}(x)\in Q_{t_0}\setminus K_{\alpha[0,t_0)}$. Hence there is $b'\in \SIGMA^{t_0}$
  with $f^{n_k}(x)\in K_{b'} < K_{\alpha[0,t_0)}$; moreover, by the choice of $k_0$,
  $b'_{[0,s)}=\alpha_{[0,s)}$. Analogously, there are $t_1>s$
  and $c'\in \SIGMA^{t_1}$ such that $f^{m_k}(x)\in K_{c'} > K_{\alpha[0,t_0)}$ and
  $c'_{[0,s)}=\alpha_{[0,s)}$. Now it suffices to put $t=t^{(s)}=\max\{t_0,t_1,t^{(s-1)}+1\}$ 
  and take arbitrary $b,c\in\SIGMA^t$ with $b_{[0,t_0)}=b'$ and $c_{[0,t_1)}=c'$.
\end{proof}

\begin{lemma}\label{L:solenoid-LYC-zero-det}
Let $f:I\to I$ be continuous and $x\in I$ be such that $\omega_f(x)$ is solenoidal.
Assume that $\alpha\in\SIGMAinf$ and $\eps>0$ are such that $\diam{K_\alpha}\ge\eps$ and
$\partial K_\alpha \subseteq \omega_f(x)$.
Then
$$
 \ccc_\infty(x,\eps)=0
 \qquad\text{and }\quad
 \rdetinfty(x,\eps)=0.
$$
\end{lemma}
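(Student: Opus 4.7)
My plan is to reduce the statement to an estimate on the shift-set
$$
  D_t=\{d\in\SIGMA^t : \dist(K_e,K_{e+d})<\eps \text{ for every } e\in\SIGMA^t\},
$$
and then use Lemma~\ref{L:solenoid-noLYC-non_isolated} to squeeze this set at every level $s$. Since $\cccl_1(x,\eps)>0$ by Lemma~\ref{L:corr-sum-lower-bound}, Lemma~\ref{L:rdet-and-c} yields $\rdetu_\infty(x,\eps)\le \cccu_\infty(x,\eps)/\cccl_1(x,\eps)$, so it suffices to prove $\cccu_\infty(x,\eps)=0$. Fix $s\in\NNN$. Because $\partial K_\alpha=\{y_\alpha,z_\alpha\}\subseteq\omega_f(x)$, Lemma~\ref{L:solenoid-noLYC-non_isolated} supplies $t=t(s)>s$ and $b,c\in\SIGMA^t$ with $b[0,s)=c[0,s)=\alpha[0,s)$ and $K_b<K_{\alpha[0,t)}<K_c$. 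Since $K_{\alpha[0,t)}\supseteq K_\alpha$ has diameter at least $\eps$, this forces $y_c-z_b>\eps$ and $y_c-y_b>\eps$. A direct count based on $p_t$-periodicity of the $K_a$ gives $N_\infty(x,t,\eps)=p_t\cdot\card D_t$, whence Lemma~\ref{L:corr-integ-and-intervals}(\ref{IT:L:corr-integ-and-intervals:2},\ref{IT:L:corr-integ-and-intervals:4}) yields $\cccu_\infty(x,\eps)\le\card D_t/p_t$.

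The task thus reduces to proving $\card D_t\le p_t/p_s$, equivalently, that every $d\in D_t$ satisfies $d[0,s)=0^s$ in $\SIGMA^s$. I would argue by contradiction: suppose $d\in D_t$ with $d[0,s)\ne 0^s$. Since carries in $\SIGMA^t$ propagate left-to-right, the first $s$ digits of both $b+d$ and $c+d$ equal $\alpha[0,s)+d[0,s)\ne\alpha[0,s)$; hence $K_{b+d},K_{c+d}\subseteq K_{\alpha[0,s)+d[0,s)}$, an $s$-block disjoint from $K_{\alpha[0,s)}$. Assume without loss of generality that this block lies to the right of $K_{\alpha[0,s)}$ at gap $\delta=\dist(K_{\alpha[0,s)},K_{\alpha[0,s)+d[0,s)})\ge 0$. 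Then $\dist(K_b,K_{b+d})<\eps$ forces $z_{\alpha[0,s)}-z_b<\eps-\delta$, and the same bound holds for $z_c$. Consequently both $z_b$ and $z_c$ lie in an interval of length $\eps-\delta<\eps$, contradicting $z_c-z_b\ge y_c-z_b>\eps$ derived earlier. The case of a block on the left of $K_{\alpha[0,s)}$ is symmetric and uses the left endpoints $y_b,y_c$ instead.

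Letting $s\to\infty$ in the bound $\cccu_\infty(x,\eps)\le\card D_t/p_t\le 1/p_s$ yields $\cccu_\infty(x,\eps)=0$, which finishes the proof. The hard part is the geometric contradiction above: it is essential to exploit \emph{both} enclosing intervals $K_b$ and $K_c$ (a single one would not suffice), so that any putative $\eps$-close shift would be forced to send both within $\eps$ of the same endpoint of $K_{\alpha[0,s)}$, which is inconsistent with their $\eps$-separation. This is precisely the point where the assumptions $\diam K_\alpha\ge\eps$ and $\partial K_\alpha\subseteq\omega_f(x)$ (needed to invoke Lemma~\ref{L:solenoid-noLYC-non_isolated}) are used in tandem.
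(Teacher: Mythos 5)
Your proof is correct and follows essentially the same route as the paper's: both invoke Lemma~\ref{L:solenoid-noLYC-non_isolated} to obtain $K_b<K_{\alpha[0,t)}<K_c$ and then show that any shift not divisible by $p_s$ forces an $\eps$-separation along the orbit, yielding $\cccu_\infty(x,\eps)\le 1/p_s\to 0$. The only difference is organizational: you phrase the separation as a two-sided contradiction on the counting quantity $N_\infty(x,t,\eps)$ via the set $D_t$, whereas the paper exhibits directly, for each such shift $h$ and each window of length $p_t$, an index at which $x_i$ and $x_{i+h}$ lie on opposite sides of $K_{\alpha[0,t)}$.
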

\begin{proof}
Fix arbitrary $s\ge 1$ and $h\in \NNN \setminus p_s\NNN$.
Let $t=t^{(s)}$, $b=b^{(s)}$, and $c=c^{(s)}$ be as in Lemma~\ref{L:solenoid-noLYC-non_isolated}.
By Lemma~\ref{L:c(fx)=c(x)} we may assume that $x\in K_{0^t}$.
Let $j_0$ be such that $0^s+j_0=\alpha_{[0,s)}$. Put $u=0^s+h+j_0$; then
$u\ne \alpha_{[0,s)}$ by the choice of $h$.

Assume first that $K_u<K_{\alpha[0,s)}$.
Let $j_1\in[0,p_t)$ be such that $0^t+j_0+j_1=c$.
Since $c_{[0,s)}=\alpha_{[0,s)}$ and $0^t+j_0$ starts with $\alpha_{[0,s)}$,
$j_1$ is a multiple of $p_s$. Further,
$(0^t+h+j_0+j_1)_{[0,s)}=0^s+h+j_0 = u$.
Thus $x_{j_0+j_1+lp_t}\in K_c$ and $x_{h+j_0+j_1+lp_t}\in K_u$ for every $l\ge 0$.
Since $K_u < \min K_{\alpha[0,s)} \le K_{\alpha[0,t)} < K_c$,
we conclude that
$$
  \varrho_{\infty}(x_i,x_{h+i}) \ge
  \varrho_{p_t}(x_i,x_{h+i}) > \diam K_{\alpha[0,t)} > \diam K_\alpha \ge\eps
  \qquad\text{for every } i\ge 0.
$$

The same conclusion can be analogously obtained in the case when $K_u>K_{\alpha[0,s)}$;
one only needs to replace $c$ with $b$.
Thus in both cases we have $\varrho_{\infty}(x_i,x_{h+i}) > \eps$ for every $i\ge 0$ and every
$h\in\NNN\setminus p_s\NNN$. This immediately implies that
$\cccu_{\infty}(x,\eps)\le (1/p_s)$.
Since $s$ is arbitrary and $\lim_s p_s=\infty$,
$\ccc_\infty(x,\eps)=0$ and hence, by Lemma~\ref{L:rdet-and-c}, $\rdetinfty(x,\eps)=0$.
\end{proof}

\begin{proposition}\label{P:solenoid-LYC}
Let $f:I\to I$ be continuous and $x\in I$.
If $\omega_f(x)$ is a solenoidal $\omega$-limit set containing two non-separable points
$y$ and $z$, then
$$
 \ccc_\infty(x,\eps)=\rdetinfty(x,\eps)=0
 \qquad
 \text{for every }
 \eps\le \varrho(y,z) \,.
$$
\end{proposition}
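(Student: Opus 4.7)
The plan is to reduce Proposition~\ref{P:solenoid-LYC} to a direct application of Lemma~\ref{L:solenoid-LYC-zero-det}. The key input is a structural description of the pair of non-separable points $y, z$ inside the solenoidal $\omega$-limit set: namely, that they appear as the two endpoints of one of the non-degenerate ``fiber'' intervals $K_\alpha$ of the $Q$-decomposition.

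More precisely, I would first invoke the characterization of non-separable points (Lemma~\ref{L:nonseparable}, whose proof is by reference to \cite[Lemma~5.26]{ruette2017chaos}) to produce some $\alpha\in\SIGMAinf$ such that $K_\alpha=[y_\alpha,z_\alpha]$ is non-degenerate and $\{y,z\}=\{y_\alpha,z_\alpha\}$. Because $y,z\in\omega_f(x)$ by hypothesis, this immediately yields $\partial K_\alpha\subseteq\omega_f(x)$. Likewise, $\diam K_\alpha=\varrho(y_\alpha,z_\alpha)=\varrho(y,z)\ge\eps$ by the assumption on $\eps$, so both hypotheses of Lemma~\ref{L:solenoid-LYC-zero-det} are in force for this particular $\alpha$ and $\eps$.

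Applying Lemma~\ref{L:solenoid-LYC-zero-det} then delivers the two conclusions $\ccc_\infty(x,\eps)=0$ and $\rdetinfty(x,\eps)=0$ simultaneously, finishing the proof. The only subtle point is that Lemma~\ref{L:nonseparable} as stated carries a ``zero entropy'' hypothesis which the present proposition does not assume; however, the characterization used only depends on the solenoidal structure of $\omega_f(x)$ (the reference \cite[Lemma~5.26]{ruette2017chaos} is of that nature), so one can cite it in the generality needed here. Apart from that bookkeeping, there is no real obstacle: all of the work has already been concentrated into Lemma~\ref{L:solenoid-LYC-zero-det}, and this proposition is essentially the translation of its hypotheses from the symbolic language $(\alpha, K_\alpha)$ into the geometric language of non-separable points $(y, z)$.
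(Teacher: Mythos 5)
Your proposal is correct and follows essentially the same route as the paper, whose proof of Proposition~\ref{P:solenoid-LYC} consists precisely of combining Lemma~\ref{L:nonseparable} (to identify $y,z$ as the endpoints of a non-degenerate $K_\alpha$) with Lemma~\ref{L:solenoid-LYC-zero-det}. Your remark about the zero-entropy hypothesis in Lemma~\ref{L:nonseparable} is a reasonable piece of bookkeeping that the paper glosses over in the same way, so there is nothing further to add.
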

\begin{proof}
  The assertion follows immediately from Lemmas~\ref{L:solenoid-LYC-zero-det}
  and \ref{L:nonseparable}.
\end{proof}

In the proof of Theorem~\ref{T:main} we will need the following lemma.
\begin{lemma}\label{L:nonseparable-in-C}
  Let $f:I\to I$ have zero entropy and $x\in I$ be such that $\omega_f(x)$ is solenoidal
  and contains non-separable points $y,z$. Let $C$ be the set from Lemma~\ref{L:solenoidal-blokh}.
  Then $y,z\in C$.
\end{lemma}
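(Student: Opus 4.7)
The goal is to produce sequences in $C$ converging to $y$ from below and to $z$ from above. Since $C$, being the set of accumulation points of $Q\cap\Omega(f)$, is closed, this will give $y,z\in C$.

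First I would apply Lemma~\ref{L:nonseparable} to convert the non-separability of $y,z\in\omega_f(x)$ into the combinatorial statement: there exists $\alpha\in\SIGMAinf$ with $K_\alpha=[y,z]$ non-degenerate and $y<z$ its endpoints. Then I would invoke Lemma~\ref{L:solenoid-noLYC-non_isolated}: for each $s\in\NNN$ it produces $t=t^{(s)}>s$ and words $b=b^{(s)},c=c^{(s)}\in\SIGMA^{t}$ sharing the prefix $\alpha_{[0,s)}$ and satisfying $K_b<K_{\alpha_{[0,t)}}<K_c$. The shared prefix forces $K_b,K_c\subseteq K_{\alpha_{[0,s)}}$, while the inclusion $K_\alpha\subseteq K_{\alpha_{[0,t)}}$ gives $K_b<K_\alpha<K_c$.

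Next I would apply Lemma~\ref{L:solenoidal-blokh}\eqref{IT:solenoidal-blokh} to any infinite extension of $b$ (respectively $c$) to deduce that $C\cap K_b\ne\emptyset$ and $C\cap K_c\ne\emptyset$. Pick $w_s\in C\cap K_b$ and $w_s'\in C\cap K_c$; then $w_s<y\le z<w_s'$, while $w_s,w_s'\in K_{\alpha_{[0,s)}}$. Since $\bigcap_s K_{\alpha_{[0,s)}}=K_\alpha=[y,z]$, the left and right endpoints of $K_{\alpha_{[0,s)}}$ converge to $y$ and $z$ respectively, so the sandwich principle yields $w_s\to y$ and $w_s'\to z$. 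Closedness of $C$ then delivers $y,z\in C$.

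This is essentially a mechanical combination of the two structural lemmas already available, so I do not anticipate a real obstacle. The only point that genuinely needs to be verified is that the common prefix $\alpha_{[0,s)}$ supplied by Lemma~\ref{L:solenoid-noLYC-non_isolated} correctly pins the approximating intervals $K_{b^{(s)}},K_{c^{(s)}}$ inside $K_{\alpha_{[0,s)}}$; this is precisely what forces the chosen points $w_s,w_s'$ to converge to the endpoints of $K_\alpha$ rather than escaping elsewhere.
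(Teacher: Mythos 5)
Your proof is correct and follows essentially the same route as the paper: both reduce via Lemma~\ref{L:nonseparable} to $y=y_\alpha$, $z=z_\alpha$ and then use the intervals $K_{b^{(s)}}<K_{\alpha[0,t)}<K_{c^{(s)}}$ from Lemma~\ref{L:solenoid-noLYC-non_isolated}, sitting inside $K_{\alpha[0,s)}$, to accumulate on $y$ from below and on $z$ from above. The only cosmetic difference is that the paper concludes directly that $y$ is a non-isolated point of $Q\cap\Omega(f)$ (hence in $C$ by definition), whereas you extract points of $C$ itself from each $K_{b^{(s)}}$ via Lemma~\ref{L:solenoidal-blokh}\eqref{IT:solenoidal-blokh} and invoke closedness of $C$ --- both steps are valid.
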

\begin{proof}
  By Lemma~\ref{L:nonseparable} we may assume that $y=y_\alpha$ and $z=z_\alpha$
  for some $\alpha\in\SIGMAinf$.
  Lemma~\ref{L:solenoid-noLYC-non_isolated} asserts that any
  open interval $J\ni y$ contains a (periodic) interval $K_b\not\ni y$.
  Hence $y$ is not an isolated point of $Q\cap\Omega(f)$ and so it belongs to $C$ 
  (recall that $C$ is the set of all limit points of $Q\cap\Omega(f)$). 
  Analogously we can prove that $z\in C$.
\end{proof}

\subsection{Solenoidal $\omega$-limit sets without non-separable points --- the first part}
\label{SS:solenoidal-without-nonsepar1}

\begin{proposition}\label{P:solenoid-notLYC-lower-bound}
Let $f:I\to I$ be continuous. Let $x\in I$ be such that $\omega_f(x)$ is solenoidal
and does not contain non-separable points. Then
$$
 \rdetlinfty(x,\eps)
 \ge
 \cccl_\infty(x,\eps)
 > 0
 \qquad
 \text{for every } \eps>0.
$$
\end{proposition}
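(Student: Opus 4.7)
The plan is to separate the easy inequality $\rdetlinfty(x,\eps) \ge \cccl_\infty(x,\eps)$ from the substantive positivity claim $\cccl_\infty(x,\eps) > 0$. The inequality follows at once from Lemma~\ref{L:rdet-and-c} applied with $m=\infty$: $\rdetlinfty(x,\eps) \ge \cccl_\infty(x,\eps)/\cccu_1(x,\eps)$, combined with the trivial bound $\cccu_1(x,\eps) \le 1$. The proposition therefore reduces to showing $\cccl_\infty(x,\eps) > 0$ for every $\eps > 0$.

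The heart of the argument is the intermediate claim
\[
  \delta_t := \max_{a \in \SIGMA^t} \diam\bigl(\omega_f(x) \cap K_a\bigr) \longrightarrow 0 \qquad (t \to \infty),
\]
which is where the no-non-separable-points hypothesis enters. I would argue by contradiction. If $\delta_{t_k} \ge \eps_0 > 0$ along some $t_k \to \infty$, pick $a_k \in \SIGMA^{t_k}$ and $y_k, z_k \in \omega_f(x) \cap K_{a_k}$ with $\abs{y_k - z_k} \ge \eps_0/2$. By compactness choose subsequences $y_k \to y$ and $z_k \to z$ with $y \ne z$. By a diagonal extraction on the projections $\pi_s(a_k)$ (one fixed $s$ at a time, using that $\SIGMA^s$ is finite), I may assume that for every $s$ the value $\pi_s(a_k)$ is eventually equal to some $\alpha_s \in \SIGMA^s$, which produces $\alpha \in \SIGMAinf$. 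Since $y_k, z_k \in K_{\pi_s(a_k)} = K_{\alpha_s}$ for all large $k$ and $K_{\alpha_s}$ is closed, passing to the limit gives $y, z \in K_{\alpha_s}$ for every $s$, hence $y, z \in K_\alpha$. By Lemma~\ref{L:solenoidal-blokh}, $\{y,z\} \subseteq \omega_f(x) \cap K_\alpha \subseteq \partial K_\alpha$, so $K_\alpha$ is non-degenerate with both endpoints in $\omega_f(x)$; Lemma~\ref{L:nonseparable} then makes $y, z$ non-separable, contradicting the hypothesis.

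Granted the claim, fix $\eps > 0$ and pick $t$ with $\delta_t < \eps/3$. For each $a \in \SIGMA^t$, the orbit eventually enters $Q_t$ by Lemma~\ref{L:solenoidal-orbit-intersects-Qk}, and the subsequence $(f^i(x))_{i \in \text{class }a}$ has $\omega_f(x) \cap K_a$ (nonempty, since $f$ cycles the intervals $K_a$) as its set of limit points. Hence for all sufficiently large $i$, $f^i(x) \in Q_t$ and $\dist(f^i(x), \omega_f(x) \cap K_{a(i)}) < \eps/3$, where $a(i) \in \SIGMA^t$ denotes the class of $f^i(x)$. Using Lemma~\ref{L:c(fx)=c(x)} I may assume this already holds from $i = 0$. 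For any $(i,j)$ with $i \equiv j \pmod{p_t}$, the iterates $f^{i+k}(x)$ and $f^{j+k}(x)$ share the class $a+k$, and each lies within $\eps/3$ of $\omega_f(x) \cap K_{a+k}$, a set of diameter at most $\delta_t < \eps/3$; so the triangle inequality gives $\varrho(f^{i+k}(x), f^{j+k}(x)) < \eps$ uniformly in $k$, and thus $\varrho_\infty(f^i(x), f^j(x)) < \eps$. Since such pairs have asymptotic density $1/p_t$ in $[0,n)^2$, we conclude $\cccl_\infty(x,\eps) \ge 1/p_t > 0$. The main obstacle is the intermediate claim, where the diagonal extraction must produce a single $\alpha \in \SIGMAinf$ whose cylinder $K_\alpha$ catches both limit points, since only there can Lemma~\ref{L:nonseparable} be invoked to convert the diameter lower bound into a non-separable pair.
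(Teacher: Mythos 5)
Your proof is correct. The endgame coincides with the paper's --- both show that orbit points at times congruent modulo $p_t$ (resp.\ $p_s$) remain within $\eps$ of each other under all further iteration, which yields $\cccl_\infty(x,\eps)\ge 1/p_t$, and the first inequality is the same easy consequence of Lemma~\ref{L:rdet-and-c} --- but the key technical step is reached by a genuinely different route. The paper isolates the finitely many $\alpha^{(j)}\in\SIGMAinf$ with $\diam K_{\alpha^{(j)}}\ge\eps/2$, uses that their interiors are wandering so the orbit eventually avoids them and that $\omega_f(x)$ meets each $K_{\alpha^{(j)}}$ in exactly one endpoint, and then proves that for large $s$ every $K_b$ with $b\in\SIGMA^s$ either has $\diam K_b<\eps$ or is a one-sided thin collar of some $K_{\alpha^{(j)}}$ with $\lambda(K_b\setminus K_{\alpha^{(j)}})<\eps$; two orbit points in the same $K_b$ are then forced onto the same side of the gap and hence $\eps$-close. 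You instead prove the single uniform statement $\max_{a\in\SIGMA^t}\diam(\omega_f(x)\cap K_a)\to 0$, by compactness and a diagonal extraction reducing to the pointwise fact that $\omega_f(x)\cap K_\alpha$ is a singleton for every $\alpha$ (Lemma~\ref{L:solenoidal-blokh} combined with Lemma~\ref{L:nonseparable}), and then exploit that within each periodicity class the orbit is attracted to $\omega_f(x)\cap K_a$. Your route buys a cleaner separation of concerns: the no-non-separable-points hypothesis is consumed entirely in one statement about $\omega_f(x)$, and the wandering-interval and Lebesgue-measure bookkeeping disappear; the paper's route instead keeps explicit geometric control of where orbit points sit inside each $K_b$, in the spirit of the interval estimates used elsewhere in Section~\ref{S:proof-theorem}. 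The one place in your write-up that deserves an explicit sentence is the passage from $\dist(f^i(x),\omega_f(x))\to 0$ to $\dist(f^i(x),\omega_f(x)\cap K_{a(i)})\to 0$: this needs the positive mutual distance between the finitely many intervals $K_a$, $a\in\SIGMA^t$ (or, as you indicate, a limit-point argument within each closed class $K_a$), but the argument is sound as you have set it up.
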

\begin{proof}
Fix any $\eps>0$. Let $\{\alpha^{(1)},\dots,\alpha^{(m)}\}$ be 
the set of all $\alpha\in\SIGMAinf$ with $\diam K_\alpha \ge (\eps/2)$.
Since interiors of $K_{\alpha^{(j)}}$ are wandering \cite[Corollary~3.2(2)]{blokh1995spectral}
and $x$ is not eventually periodic, the orbit of $x$ visits $\bigcup_j K_{\alpha^{(j)}}$
at most finitely many times; by Lemma~\ref{L:c(fx)=c(x)} we may assume that
\begin{equation}\label{EQ:solenoid-notLYC-lower-bound:orbit}
  K_{\alpha^{(j)}} \cap \Orb_f(x)  =\emptyset
  \qquad
  \text{for every } j\in [1,m].
\end{equation}

Fix any $j\in [1,m]$. Since $\omega_f(x)$ contains no non-separable points,
\begin{equation}\label{EQ:solenoid-notLYC-lower-bound:singleton}
  K_{\alpha^{(j)}} \cap \omega_f(x)
  \quad
  \text{is a singleton subset of }
  \quad
  \partial K_{\alpha^{(j)}} = \{y_{\alpha^{(j)}},z_{\alpha^{(j)}}\}
\end{equation}
by Lemmas~\ref{L:solenoidal-blokh} and \ref{L:nonseparable}.
Thus there is $s_j\in\NNN$ with the following properties:
if $y_{\alpha^{(j)}}\in\omega_f(x)$ then
        \begin{equation}\label{EQ:solenoid-notLYC-lower-bound:Ka-less}
          K_a < K_{\alpha^{(j)}[0,t)}
          \qquad
          \text{for every }  t\ge s_j \text{ and }
          a\in \SIGMA^t\setminus\{\alpha^{(j)}_{[0,t)}\}
          \text{ starting with }      \alpha^{(j)}_{[0,s_j)};
        \end{equation}
and if $z_{\alpha^{(j)}}\in\omega_f(x)$ then
        \begin{equation}\label{EQ:solenoid-notLYC-lower-bound:Ka-greater}
          K_a > K_{\alpha^{(j)}[0,t)}
          \qquad
          \text{for every }  t\ge s_j \text{ and }
          a\in \SIGMA^t\setminus\{\alpha^{(j)}_{[0,t)}\}
          \text{ starting with }      \alpha^{(j)}_{[0,s_j)}.
        \end{equation}
To see this it suffices to realize that if \eqref{EQ:solenoid-notLYC-lower-bound:Ka-less}
is not true then $z_{\alpha^{(j)}}\in\omega_f(x)$,
and if \eqref{EQ:solenoid-notLYC-lower-bound:Ka-greater} is not true then
$y_{\alpha^{(j)}}\in\omega_f(x)$.

We claim that there exists $s\ge\max\{s_j:\ j\in[1,m]\}$ such that, for every $b\in\SIGMA^s$,
\begin{equation}\label{EQ:solenoid-notLYC-lower-bound:diam}
  \text{either }
  b=\alpha^{(j)}_{[0,s)}
     \text{ and }
     \lambda(K_b\setminus K_{\alpha^{(j)}}) < \eps
     \text{ for some }j,
  \qquad\text{or }
  \diam K_b < \eps.
\end{equation}
To prove it suppose that there are a sequence $(s_l')_l\nearrow\infty$ of integers
and a sequence $(b^{(l)})_l$ of words $b^{(l)}\in \SIGMA^{s_l'}$,
such that $\diam K_{b^{(l)}}\ge \eps$ and $b^{(l)}$ is not a beginning of any $\alpha^{(j)}$.
Denote by $w_l$ the middle of $K_{b^{(l)}}$; without loss of generality we may assume that
$(w_l)_l$ converges to some $w\in I$. Since $w_l \pm (\eps/2) \in K_{b^{(l)}}$, we have that
$L=[w-\eps/4,w+\eps/4]$ is a subset of $K_{b^{(l)}}$ for every sufficiently large $l$.
Hence $L\subseteq Q$ and so, due to $\diam L= \eps/2$,
$L$ is a subset of some $K_{\alpha^{(j)}}$.
But then $K_{\alpha^{(j)}}\subseteq K_{b^{(l)}}$, that is, $b^{(l)}$ is a beginning of $\alpha^{(j)}$,
a contradiction. Thus we have proved that there is arbitrarily large $s$ such that,
for every $b\in \SIGMA^s$, either $b=\alpha^{(j)}_{[0,s)}$ or $\diam K_b<\eps$.
Since $K_{\alpha^{(j)}}=\bigcap_s K_{\alpha^{(j)}[0,s)}$, we see that for $s$ large enough
we also have $\lambda(K_b\setminus K_{\alpha^{(j)}}) < \eps$. Hence
\eqref{EQ:solenoid-notLYC-lower-bound:diam} is proved.

We may assume that $x\in Q_s$.
Take any $h\in p_s\NNN$. Since $h$ is a multiple of $p_s$, for every $i$ there is
$b^{(i)}\in \SIGMA^s$ with $x_i,x_{i+h}\in K_{b^{(i)}}$. We claim that
\begin{equation}\label{EQ:solenoid-notLYC-lower-bound:dist}
  \abs{x_{i+h}-x_i}\le\eps
  \qquad\text{for every }i.
\end{equation}
To prove it, fix any $i$ and put $b=b^{(i)}$. If $b\ne \alpha^{(j)}_{[0,s)}$ for every $j$,
then $\diam K_b<\eps$ by \eqref{EQ:solenoid-notLYC-lower-bound:diam}, and
\eqref{EQ:solenoid-notLYC-lower-bound:dist} is immediate.
So assume that $b = \alpha^{(j)}_{[0,s)}$ for some $j$; put $\alpha=\alpha^{(j)}$.
By \eqref{EQ:solenoid-notLYC-lower-bound:singleton}, either $y_\alpha\in \omega_f(x)$
or $z_\alpha\in \omega_f(x)$; we will consider only the former case, the latter one being similar.
Since $x_i\not\in K_\alpha$ by \eqref{EQ:solenoid-notLYC-lower-bound:orbit},
there are $t>s$ and $c\in\SIGMA^t$ starting with $b$ such that
$x_i\in K_c$ and $c\ne\alpha_{[0,t)}$. Thus $K_c<K_{\alpha[0,t)}$
by \eqref{EQ:solenoid-notLYC-lower-bound:Ka-less}, and
$$
  x_i\in K_c < \min K_{\alpha[0,t)}  \le \min K_\alpha = y_\alpha.
$$
Analogously, $x_{i+h}<y_\alpha$. Thus, by \eqref{EQ:solenoid-notLYC-lower-bound:diam},
$$
  \abs{x_{i+h}-x_i}
  \le \diam [y_b,y_\alpha) \le \lambda(K_b\setminus K_\alpha) < \eps.
$$
Hence \eqref{EQ:solenoid-notLYC-lower-bound:dist} is proved.

Since \eqref{EQ:solenoid-notLYC-lower-bound:dist} is true for every $h\in p_s\NNN$, we have
$\cccl_\infty(x,\eps)\ge (1/p_s) >0$. The proposition is proved.
\end{proof}

\subsection{Solenoidal $\omega$-limit sets without non-separable points --- the second part}
\label{SS:solenoidal-without-nonsepar2}
In this section we assume that $p_t=2^t$ for every $t\ge 1$; so $\SIGMA^t=\{0,1\}^t$
and $\SIGMAinf=\{0,1\}^{\NNN_0}$.
Fix $\eps>0$ and for every $t\ge 0$ put
\begin{equation*}
  \LLl_t(\eps) = \{a\in\SIGMA^t:\ \diam(K_a)\ge\eps\}
  \qquad
  \text{and}
  \qquad
  \LLl_\infty(\eps) = \{\alpha\in\SIGMAinf:\ \diam(K_\alpha)\ge\eps\};
\end{equation*}
denote the cardinalities of these (finite) sets by $\ell_t(\eps)$ and $\ell_\infty(\eps)$, respectively.

\begin{lemma}\label{L:solenoid-notLYC-upper-bound-longK} Fix any $\eps>0$.
Then
  \begin{enumerate}
    \item\label{IT:L:solenoid-notLYC-upper-bound-longK:1}
      $(\ell_t(\eps)\cdot 2^{-t})_t$ is non-increasing;
    \item\label{IT:L:solenoid-notLYC-upper-bound-longK:2}
      $(\lambda_t(\eps))_t$ is strictly decreasing,
      where $\lambda_t(\eps)=\sum_{a\in\LLl_t(\eps)} \diam(K_a)$;
    \item\label{IT:L:solenoid-notLYC-upper-bound-longK:3}
      $\ell_t(\eps)=\ell_\infty(\eps)$ for every sufficiently large $t$.
  \end{enumerate}
\end{lemma}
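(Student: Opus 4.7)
The plan is to exploit the binary tree structure from the standing assumption $p_t = 2^t$: every $K_a$ with $a\in\SIGMA^t$ contains precisely the two disjoint non-degenerate closed subintervals $K_{a0}$ and $K_{a1}$ coming from $\SIGMA^{t+1}$, and every $b\in\SIGMA^{t+1}$ arises this way.

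For \eqref{IT:L:solenoid-notLYC-upper-bound-longK:1}, I would observe that if $b\in\LLl_{t+1}(\eps)$ and $a = b_{[0,t)}$, then $K_b\subseteq K_a$ forces $\diam K_a\ge\eps$, so $a\in\LLl_t(\eps)$. Since each $a\in\LLl_t(\eps)$ has at most two children in $\SIGMA^{t+1}$, I get $\ell_{t+1}(\eps)\le 2\ell_t(\eps)$, which immediately gives the monotonicity of $(\ell_t(\eps)\cdot 2^{-t})_t$.

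For \eqref{IT:L:solenoid-notLYC-upper-bound-longK:2}, the key geometric fact is that $K_{a0}$ and $K_{a1}$ are disjoint closed non-degenerate subintervals of the interval $K_a\subseteq\RRR$, so they are separated by a genuine gap, giving the strict estimate $\diam K_{a0}+\diam K_{a1}<\diam K_a$. Grouping $\LLl_{t+1}(\eps)$ by the first $t$ coordinates and using that each $b\in\LLl_{t+1}(\eps)$ begins with some $a\in\LLl_t(\eps)$, I obtain
$$
\lambda_{t+1}(\eps)\ \le\ \sum_{a\in\LLl_t(\eps)}\bigl(\diam K_{a0}+\diam K_{a1}\bigr)\ <\ \sum_{a\in\LLl_t(\eps)}\diam K_a\ =\ \lambda_t(\eps),
$$
with the last inequality strict whenever $\LLl_t(\eps)\neq\emptyset$ (the only case where the claim has content).

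For \eqref{IT:L:solenoid-notLYC-upper-bound-longK:3}, I would work with the projection $P_t\colon\LLl_\infty(\eps)\to\LLl_t(\eps)$, $P_t(\alpha)=\alpha_{[0,t)}$, which is well-defined since $K_\alpha\subseteq K_{\alpha_{[0,t)}}$. Finiteness of $\LLl_\infty(\eps)$ means the finitely many pairs of distinct elements are distinguished after some bounded depth, so $P_t$ is injective for all large $t$; this gives $\ell_t(\eps)\ge\ell_\infty(\eps)$. For the reverse direction I would argue by contradiction via a compactness argument on $\SIGMAinf$: if $\ell_t(\eps)>\ell_\infty(\eps)$ for infinitely many $t$, pick $a^{(t)}\in\LLl_t(\eps)\setminus P_t(\LLl_\infty(\eps))$, extend it (e.g.\ by trailing zeros) to $\alpha^{(t)}\in\SIGMAinf$, and extract a convergent subsequence $\alpha^{(t_k)}\to\alpha^*$. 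For each fixed $s$ one has $\alpha^*_{[0,s)}=\alpha^{(t_k)}_{[0,s)}$ for large $k$, and since $K_{\alpha^{(t_k)}_{[0,s)}}\supseteq K_{a^{(t_k)}}$ has diameter at least $\eps$, the nested intersection $K_{\alpha^*}=\bigcap_s K_{\alpha^*_{[0,s)}}$ has diameter at least $\eps$ as well. Hence $\alpha^*\in\LLl_\infty(\eps)$, yet $P_{t_k}(\alpha^*)=a^{(t_k)}$ for $k$ large, contradicting the choice of $a^{(t_k)}$.

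The one step requiring genuine care, and the main obstacle, is the compactness step in \eqref{IT:L:solenoid-notLYC-upper-bound-longK:3}: one must be sure that the lower bound $\diam K_{\alpha^{(t_k)}_{[0,s)}}\ge\eps$ survives passage to the limit $k\to\infty$ at fixed $s$, and then that taking $s\to\infty$ preserves it. This rests on the elementary fact that a nested intersection of closed intervals of diameter at least $\eps$ has diameter at least $\eps$, together with the fact that initial segments of the limit sequence eventually agree with those of the approximating sequences.
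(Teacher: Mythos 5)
Parts \eqref{IT:L:solenoid-notLYC-upper-bound-longK:1} and \eqref{IT:L:solenoid-notLYC-upper-bound-longK:2} of your argument are correct (the paper treats them as immediate, and your gap-between-disjoint-closed-subintervals observation is exactly the right reason for the strict decrease). The problem is in \eqref{IT:L:solenoid-notLYC-upper-bound-longK:3}. The injectivity half, giving $\ell_t(\eps)\ge\ell_\infty(\eps)$ for large $t$, is fine. But the contradiction you draw in the surjectivity half does not follow: convergence $\alpha^{(t_k)}\to\alpha^*$ in the product topology only says that for each \emph{fixed} $s$ one has $\alpha^{(t_k)}_{[0,s)}=\alpha^*_{[0,s)}$ for all large $k$; it does not give agreement up to the growing depth $t_k$, so the claim ``$P_{t_k}(\alpha^*)=a^{(t_k)}$ for $k$ large'' is unjustified. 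Indeed it cannot hold: you chose $a^{(t_k)}\notin P_{t_k}(\LLl_\infty(\eps))$ while $\alpha^*\in\LLl_\infty(\eps)$, so $P_{t_k}(\alpha^*)\ne a^{(t_k)}$ for every $k$. Nothing prevents $a^{(t_k)}$ from agreeing with $\alpha^*$ only up to some depth $s_k$ with $s_k\to\infty$ but $s_k<t_k$ and branching off afterwards; your limit argument then only proves $\alpha^*\in\LLl_\infty(\eps)$, i.e.\ that $\LLl_\infty(\eps)\ne\emptyset$, and yields no contradiction. So the surjectivity of $\LLl_\infty(\eps)\to\LLl_t(\eps)$ for large $t$ — the heart of \eqref{IT:L:solenoid-notLYC-upper-bound-longK:3} — is not established.

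What is missing is a quantitative mechanism limiting how often a word $a$ with $\diam K_a\ge\eps$ can fail to be a prefix of some $\alpha\in\LLl_\infty(\eps)$. The paper extracts this from \eqref{IT:L:solenoid-notLYC-upper-bound-longK:2}: with $\varphi_t:\LLl_{t+1}(\eps)\to\LLl_t(\eps)$, $\varphi_t(a)=a_{[0,t)}$, any failure of surjectivity of $\varphi_t$ forces the drop $\lambda_{t+1}(\eps)\le\lambda_t(\eps)-\eps$; since $(\lambda_t(\eps))_t$ is non-increasing and bounded below, this can happen only finitely often, whence $(\ell_t(\eps))_t$ is eventually non-decreasing, bounded by $1/\eps$, hence eventually constant, and the constant is easily identified with $\ell_\infty(\eps)$. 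If you prefer to stay with your compactness viewpoint, it can be repaired, but only with two further ideas: call $a$ \emph{bad} if $\diam K_a\ge\eps$ yet $a$ is a prefix of no element of $\LLl_\infty(\eps)$; minimal bad words (no proper bad prefix) are pairwise prefix-incomparable, so the corresponding $K_a$ are pairwise disjoint intervals of length $\ge\eps$ in $I$ and there are at most $1/\eps$ of them; and for each bad $a$, K\"onig's lemma (plus your nested-intersection fact) shows that $[a]\cap\LLl_t(\eps)=\emptyset$ for all sufficiently large $t$, since otherwise $a$ would have an infinite extension in $\LLl_\infty(\eps)$. Together these bound the levels at which bad words occur, which is exactly the step your proposal skips.
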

\begin{proof}
  If $a\not\in\LLl_t$, then trivially $[a]\cap \LLl_{t'}=\emptyset$
  for every $t'\ge t$. So
  $\ell_{t'}\le \ell_t\cdot 2^{t'-t}$  and
  \eqref{IT:L:solenoid-notLYC-upper-bound-longK:1} is proved.

  Since \eqref{IT:L:solenoid-notLYC-upper-bound-longK:2} is trivial, it suffices to prove
  \eqref{IT:L:solenoid-notLYC-upper-bound-longK:3}. For every $t$ define $\varphi_t:\LLl_{t+1}\to\LLl_t$
  by $\varphi_t(a) = a_{[0,t)}$; since $K_a\subseteq K_{\varphi_t(a)}$, $\varphi_t(a)\in \LLl_t$
  for every $a\in\LLl_{t+1}$. If $\varphi_t$ is not surjective, then
  $\lambda_{t+1}\le \lambda_t-\eps$. Combined with \eqref{IT:L:solenoid-notLYC-upper-bound-longK:2}
  this implies that $\varphi_t$ is surjective for all but finitely many $t$. Hence, for some $s$,
  \begin{equation}\label{EQ:L:solenoid-notLYC-upper-bound-longK:1}
    \ell_s\le \ell_{s+1}\le \ell_{s+2}\le\dots
  \end{equation}
  Since $\ell_t\le(1/\eps)$ for every $t$,
  by \eqref{EQ:L:solenoid-notLYC-upper-bound-longK:1} there is an integer $k$ such
  that $\ell_t=k$ for every sufficiently large $t$. Now trivially $\ell_\infty=k$.
\end{proof}

\medskip

For any $t\ge 0$ and $a\in\SIGMA^t$,
denote the words $a00,a01,a10,a11$
by $a_*,a_\circ,a^\circ, a^*$ in such a way that
\begin{equation*}
  K_{a_*} < K_{a_\circ} < K_{a^\circ} < K_{a^*}.
\end{equation*}
Since either $K_{a_*} \cup K_{a_\circ} \subseteq K_{a0}$ and
$K_{a^\circ} \cup K_{a^*} \subseteq K_{a1}$, or vice versa, we have
\begin{equation}\label{EQ:a*(s)}
  a_{*t} = a_{\circ t} \ne a^{\circ}_t =  a^{*}_ t.
\end{equation}
Put
\begin{equation*}
  \eps_t = \max_{a\in\SIGMA^t} \dist(K_{a_*},K_{a^*})
\end{equation*}
and note that $\eps_t > \diam(K_{a_\circ} \cup K_{a^\circ})$ for every $a\in\SIGMA^t$.

\begin{lemma}\label{L:solenoid-notLYC-upper-bound-epst}
  Let $f:I\to I$ be continuous. Let $x\in I$ be such that $\omega_f(x)$ is solenoidal
  and does not contain non-separable points. Then
  $$
   \lim_{t\to\infty}\eps_t = 0.
  $$
\end{lemma}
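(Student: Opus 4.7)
The plan is a contradiction argument: assume $\eps_t \not\to 0$ and extract a single $\alpha\in\SIGMAinf$ along which a one-sided staircase toward $y_\alpha\in\omega_f(x)$ forces an impossible configuration of the four grand-children. Suppose for contradiction that there exist $\delta>0$ and an increasing sequence $(t_k)_k$ with $\eps_{t_k}\ge\delta$. For each $k$ pick a maximizer $a^{(k)}\in\SIGMA^{t_k}$, so that $\dist(K_{(a^{(k)})_*},K_{(a^{(k)})^*})\ge\delta$; since both grand-children lie in $K_{a^{(k)}}$, this yields $\diam K_{a^{(k)}}\ge\delta$, i.e.\ $a^{(k)}\in\LLl_{t_k}(\delta)$. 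By Lemma~\ref{L:solenoid-notLYC-upper-bound-longK}\eqref{IT:L:solenoid-notLYC-upper-bound-longK:3}, for large $t$ the set $\LLl_t(\delta)$ has constant cardinality $k_0=\ell_\infty(\delta)$, and its elements are precisely the truncations of the $k_0$ sequences in $\LLl_\infty(\delta)$. A pigeonhole passage to a further subsequence yields a single $\alpha\in\LLl_\infty(\delta)$ with $a^{(k)}=\alpha_{[0,t_k)}$ for every $k$.

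Next I would identify the ``active'' endpoint of $K_\alpha=[y_\alpha,z_\alpha]$, which is non-degenerate since $\diam K_\alpha\ge\delta>0$. Lemma~\ref{L:solenoidal-blokh} gives $\emptyset\ne C\cap K_\alpha\subseteq\omega_f(x)\cap K_\alpha\subseteq\partial K_\alpha$, and Lemma~\ref{L:nonseparable} together with the absence of non-separable points in $\omega_f(x)$ prevents both endpoints from belonging to $\omega_f(x)$ simultaneously. Hence exactly one of $y_\alpha,z_\alpha$ is in $\omega_f(x)$; assume WLOG that $y_\alpha\in\omega_f(x)$ (the opposite case is symmetric). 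The construction behind \eqref{EQ:solenoid-notLYC-lower-bound:Ka-less} in the proof of Proposition~\ref{P:solenoid-notLYC-lower-bound} then furnishes an integer $s$ such that for every $t\ge s$ and every $b\in\SIGMA^t\setminus\{\alpha_{[0,t)}\}$ starting with $\alpha_{[0,s)}$, one has $K_b<K_{\alpha_{[0,t)}}$, i.e.\ every ``sibling'' of the $\alpha$-branch at level $t$ sits strictly to the left of the $\alpha$-branch.

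Apply this property at level $t_k+2$ for $t_k\ge s$. The four grand-children of $K_{a^{(k)}}=K_{\alpha_{[0,t_k)}}$ are words in $\SIGMA^{t_k+2}$, all beginning with $\alpha_{[0,s)}$; one of them is $\alpha_{[0,t_k+2)}$, and by the sibling property the other three sit strictly to its left. Therefore $K_{\alpha_{[0,t_k+2)}}$ must be the rightmost grand-child, i.e.\ $\alpha_{[0,t_k+2)}=(a^{(k)})^*$, and $K_\alpha\subseteq K_{(a^{(k)})^*}$ gives $\min K_{(a^{(k)})^*}\le y_\alpha$. The bound $\dist(K_{(a^{(k)})_*},K_{(a^{(k)})^*})\ge\delta$ then forces
$$
  \min K_{\alpha_{[0,t_k)}}
  \le \min K_{(a^{(k)})_*}
  \le \max K_{(a^{(k)})_*}
  \le \min K_{(a^{(k)})^*}-\delta
  \le y_\alpha-\delta.
$$
Since $K_{\alpha_{[0,t)}}\searrow K_\alpha$ we have $\min K_{\alpha_{[0,t_k)}}\to y_\alpha$, contradicting the uniform bound $y_\alpha-\delta$. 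The only delicate piece of bookkeeping is matching the four grand-child positions $_*,_\circ,{}^\circ,{}^*$ against the one-sided sibling structure at level $t_k+2$ to conclude that $\alpha_{[0,t_k+2)}=(a^{(k)})^*$; once this rightmost-grand-child identification is in hand, the geometric contradiction is immediate.
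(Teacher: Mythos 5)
Your proof is correct, and although it opens exactly like the paper's (argue by contradiction and use Lemma~\ref{L:solenoid-notLYC-upper-bound-longK}\eqref{IT:L:solenoid-notLYC-upper-bound-longK:3} plus a pigeonhole/subsequence step to pin all maximizers $a^{(k)}$ to a single $\alpha\in\SIGMAinf$ with $\diam K_\alpha\ge\delta$), the mechanism producing the contradiction is genuinely different. The paper shows that for large $n$ the interval $K_\alpha$ must lie strictly between the extreme grandchildren, $K_{a^n_*}<K_\alpha<K_{a^{n*}}$ (a short diameter-limit argument compressed into ``by the choice of $\bar\eps$''), so that points of $\omega_f(x)$ accumulate on $K_\alpha$ from both sides, both endpoints $y_\alpha,z_\alpha$ belong to $\omega_f(x)$, and Lemma~\ref{L:nonseparable} produces non-separable points only at the very end. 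You instead spend the hypothesis at the start: by Lemmas~\ref{L:solenoidal-blokh} and \ref{L:nonseparable} exactly one endpoint, say $y_\alpha$, lies in $\omega_f(x)$; the one-sided ordering \eqref{EQ:solenoid-notLYC-lower-bound:Ka-less} then forces $\alpha_{[0,t_k+2)}=(a^{(k)})^*$, so $K_\alpha$ sits in the rightmost grandchild, and the gap $\dist(K_{(a^{(k)})_*},K_{(a^{(k)})^*})\ge\delta$ lying inside $K_{\alpha_{[0,t_k)}}$ and entirely below $y_\alpha$ contradicts $\min K_{\alpha_{[0,t_k)}}\nearrow y_\alpha$ --- a purely metric finish. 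Both routes rest on the same ingredients (Lemmas~\ref{L:solenoid-notLYC-upper-bound-longK}, \ref{L:solenoidal-blokh}, \ref{L:nonseparable}); yours replaces the paper's terse positioning step by an explicit ordering argument, at the cost of importing \eqref{EQ:solenoid-notLYC-lower-bound:Ka-less}, which is an assertion inside the proof of Proposition~\ref{P:solenoid-notLYC-lower-bound} rather than a quotable statement. If you want the argument self-contained, reprove it in one line: if at arbitrarily high levels some sibling interval of the $\alpha$-branch starting with $\alpha_{[0,s)}$ lay to its right, then points of $C\subseteq\omega_f(x)$ inside those intervals would accumulate on $z_\alpha$, giving $z_\alpha\in\omega_f(x)$, which you have already excluded.
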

\begin{proof}
  By Lemma~\ref{L:solenoid-notLYC-upper-bound-longK}\eqref{IT:L:solenoid-notLYC-upper-bound-longK:3}
  there is $s$ such that, for every $a\in\SIGMA^*$ with $\abs{a}\ge s$,
  \begin{equation*}
    \diam(K_a)\ge\eps
    \qquad\text{if and only if}\qquad
    \text{there is } \alpha\in\SIGMAinf\cap[a] \text{ with } \diam(K_\alpha)\ge\eps.
  \end{equation*}
  Suppose that $\limsup_t \eps_t>0$; that is, there are $\bar\eps>0$, integers $s\le t_1<t_2<\dots$,
  and words $a^n\in\SIGMA^{t_n}$ ($n\in\NNN$), such that
  $\diam(K_{a^n})>\dist(K_{a^n_*},K_{a^{n*}})>\bar\eps$
  for every $n$. By going to a subsequence if necessary, we may assume that there is
  $\alpha\in\SIGMAinf$ such that $K_\alpha=\bigcap_n K_{a^n}$.
  By the choice of $\bar\eps$,
  $K_{a^n_*} < K_\alpha < K_{a^{n*}}$ for every sufficiently large $n$,
  and so both end points $y_\alpha,z_\alpha$ of $K_\alpha$ belong to $\omega_f(x)$.
  Hence, by Lemma~\ref{L:nonseparable}, $\omega_f(x)$ contains non-separable points.
\end{proof}

\begin{lemma}\label{L:solenoid-notLYC-upper-bound-words}
  Let $s\ge 0$, $t\ge s+2$, $a\in\SIGMA^s$, and $b\in\SIGMA^t\cap[a]$.
  Let $h$ be an odd multiple of $2^s$. Then for every $u,v\in\{0,1\}$
  there is a unique integer $i\in[0,3]$ such that
  either $b+i2^s\in[a0u]$ and $b+h+i2^s\in[a1v]$,
  or $b+i2^s\in[a1v]$ and $b+h+i2^s\in[a0u]$.
\end{lemma}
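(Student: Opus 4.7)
The plan is to translate the claim into elementary arithmetic modulo $2^{s+2}$. The map $a = a_0a_1\cdots a_{t-1}\leftrightarrow\sum_{i=0}^{t-1}a_i 2^i$ turns $\SIGMA^t$ (with its left-to-right carry addition) into $\ZZZ/2^t\ZZZ$, and membership in a cylinder $[w]$ with $w\in\SIGMA^{s+2}$ depends only on the residue modulo $2^{s+2}$. Identifying $a\in\SIGMA^s$ with its integer value in $[0,2^s)$, this yields
\[
  [a0u]\;\longleftrightarrow\; a+2^{s+1}u\pmod{2^{s+2}},\qquad
  [a1v]\;\longleftrightarrow\; a+2^s+2^{s+1}v\pmod{2^{s+2}}.
\]

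Since $b\in[a]$, there is a unique $c\in\{0,1,2,3\}$ with $b\equiv a+c\cdot 2^s\pmod{2^{s+2}}$. Because $h$ is an odd multiple of $2^s$, $\delta:=(h/2^s)\bmod 4\in\{1,3\}$, and
\[
  b+i\cdot 2^s\equiv a+(c+i)\cdot 2^s\pmod{2^{s+2}},\quad
  b+h+i\cdot 2^s\equiv a+(c+i+\delta)\cdot 2^s\pmod{2^{s+2}}.
\]
Consequently, $b+i\cdot 2^s\in[a0u]$ iff $c+i\equiv 2u\pmod 4$, and $b+i\cdot 2^s\in[a1v]$ iff $c+i\equiv 1+2v\pmod 4$; the analogous equivalences hold for $b+h+i\cdot 2^s$ with $c+i$ replaced by $c+i+\delta$.

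The desired conclusion splits into two mutually exclusive sub-cases. Case~A, where $b+i\cdot 2^s\in[a0u]$ and $b+h+i\cdot 2^s\in[a1v]$, is compatible with $\delta$ precisely when $\delta\equiv 1+2(v-u)\pmod 4$; Case~B, the reversed order, requires $\delta\equiv 3+2(u-v)\pmod 4$. The two residues $1+2(v-u)$ and $3+2(u-v)$ are both odd and sum to~$4$, so one equals $1$ and the other equals $3$ modulo~$4$. Hence for the prescribed $\delta\in\{1,3\}$ exactly one of Cases~A and~B applies, and in that case the single congruence $c+i\equiv 2u\pmod 4$ or $c+i\equiv 1+2v\pmod 4$ determines $i\in[0,3]$ uniquely.

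The essential (really, the only) obstacle is careful bookkeeping: aligning the sequence-based notation of $\SIGMA^t$, the cylinder memberships $[a0u]$ and $[a1v]$, and their integer-residue description modulo $2^{s+2}$, and verifying that the least-significant-bit-on-the-left convention is respected throughout. Once this encoding is fixed, existence and uniqueness of $i$ reduce to the finite case check above.
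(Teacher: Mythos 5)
Your proof is correct and in substance matches the paper's: both arguments rest on the observation that, once $\SIGMA^t$ with its left-to-right carry addition is identified with $\ZZZ/2^t\ZZZ$ (least significant digit on the left), membership in the cylinders $[a0u]$, $[a1v]$ depends only on the residue modulo $2^{s+2}$, so the whole statement reduces to arithmetic modulo $4$. The paper performs this reduction by normalizing without loss of generality to $s=0$, $t=2$, $b=00$, $h\in\{1,3\}$ and checking the finitely many cases, while you carry out the equivalent verification explicitly via congruences; the content is the same, with your write-up supplying the details the paper leaves as ``easily verified.''
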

\begin{proof}
  We may assume that $s=0$, $t=2$, $h\in\{1,3\}$, and $b=00$.
  In such a case the statement of the lemma can be easily verified.
\end{proof}

\begin{lemma}\label{L:solenoid-notLYC-upper-bound-dist-infty1}
  Let $t\ge 2$ and $h$ be an odd multiple of $2^s$ for some $s\in[0, t-2]$.
  Then
  \begin{equation*}
    \dist_\infty(K_b,K_{b+h}) \ge \eps_s
    \qquad\text{for every } b\in\SIGMA^t.
  \end{equation*}
\end{lemma}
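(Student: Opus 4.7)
The plan is to exhibit a single non-negative shift $i$ for which $\dist(K_{b+i},K_{b+h+i})\ge\eps_s$; the bound $\dist_\infty(K_b,K_{b+h})\ge\eps_s$ then follows immediately from the definition of $\dist_\infty$. To build such an $i$, I would target the extremal pair realising $\eps_s$: choose $\tilde a\in\SIGMA^s$ with $\dist(K_{\tilde a_*},K_{\tilde a^*})=\eps_s$ and arrange for the length-$(s+2)$ prefixes of $b+i$ and $b+h+i$ to be $\tilde a_*$ and $\tilde a^*$ (in some order); the inclusions $K_{b+i}\subseteq K_{\tilde a_*}$ and $K_{b+h+i}\subseteq K_{\tilde a^*}$ then deliver the required inequality.

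By \eqref{EQ:a*(s)} the $s$-th coordinates of $\tilde a_*$ and $\tilde a^*$ are distinct, so after possibly swapping the roles of $\tilde a_*$ and $\tilde a^*$ we can write $\tilde a_*=\tilde a\,0\,u$ and $\tilde a^*=\tilde a\,1\,v$ for suitable $u,v\in\{0,1\}$. Since $\SIGMA^s$ is a finite abelian group, there is a unique $i_0\in[0,2^s)$ for which $b+i_0$ begins with $\tilde a$; set $b'=b+i_0\in\SIGMA^t\cap[\tilde a]$. Because $h$ is an odd multiple of $2^s$, its first $s$ coordinates vanish, so $b'+h$ begins with $\tilde a$ as well.

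Now apply Lemma~\ref{L:solenoid-notLYC-upper-bound-words} with $a=\tilde a$, $b=b'$, and the chosen $u,v$: it supplies $i_1\in[0,3]$ such that one of $b'+i_1 2^s$, $b'+h+i_1 2^s$ lies in $[\tilde a_*]$ and the other in $[\tilde a^*]$. Setting $i=i_0+i_1 2^s$ (which lies in $[0,2^t)$ because $t\ge s+2$ gives $4\cdot 2^s\le 2^t$), we have $b+i=b'+i_1 2^s$ and $b+h+i=b'+h+i_1 2^s$, so $K_{b+i}$ sits inside one of $K_{\tilde a_*},K_{\tilde a^*}$ and $K_{b+h+i}$ inside the other. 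Hence $\dist(K_{b+i},K_{b+h+i})\ge\dist(K_{\tilde a_*},K_{\tilde a^*})=\eps_s$, completing the proof.

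I do not expect a real obstacle here: the construction is a clean two-step alignment, first correcting the first $s$ coordinates via $i_0$ (using the group structure of $\SIGMA^t$) and then the next two via $i_1 2^s$ (using the previous lemma). The only point deserving a moment of care is checking that the parity of $h/2^s$ forces the alignment provided by Lemma~\ref{L:solenoid-notLYC-upper-bound-words} to separate $b+i$ and $b+h+i$ between $[\tilde a_*]$ and $[\tilde a^*]$ rather than between $[\tilde a_\circ]$ and $[\tilde a^\circ]$; but this is precisely the flexibility built into that lemma by allowing $u$ and $v$ to be chosen freely, so no further work is needed.
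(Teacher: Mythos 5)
Your proof is correct and follows essentially the same route as the paper: pick $a\in\SIGMA^s$ realising $\eps_s$, then combine \eqref{EQ:a*(s)} with Lemma~\ref{L:solenoid-notLYC-upper-bound-words} to land the two shifted blocks in $[a_*]$ and $[a^*]$. The only cosmetic difference is that the paper invokes the translation invariance $\dist_\infty(K_b,K_c)=\dist_\infty(K_{b+j},K_{c+j})$ to assume $b\in[a]$, whereas you fold that normalisation into the explicit shift $i=i_0+i_1 2^s$; both are fine.
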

\begin{proof}
  Let $a\in\SIGMA^s$ be such that $\eps_s=\dist(K_{a_*},K_{a^*})$. Fix arbitrary $b\in\SIGMA^t$.
  Since $\dist_\infty(K_b,K_c)=\dist_\infty(K_{b+j},K_{c+j})$ for every $c$ and $j$, we may
  assume that $b\in[a]$.
  By \eqref{EQ:a*(s)} and Lemma~\ref{L:solenoid-notLYC-upper-bound-words},
  there is $i\in[0,3]$
  such that either $b+i2^s\in[a_*]$ and $b+h+i2^s\in[a^*]$, or vice versa.
  In both cases,
  \begin{equation*}
    \dist_\infty(K_b,K_{b+h})
    \ge \dist(K_{b+i2^s},K_{b+h+i2^s})
    \ge \dist(K_{a_*},K_{a^*})
    =\eps_s.
  \end{equation*}
\end{proof}

The next lemma follows immediately from Lemma~\ref{L:solenoid-notLYC-upper-bound-dist-infty1}.
\begin{lemma}\label{L:solenoid-notLYC-upper-bound-dist-infty2}
  Let $t\ge 2$ and $h$ be an integer which is not a multiple of $2^{t-1}$.
  Then, for every $b\in\SIGMA^t$,
  \begin{equation*}
    \dist_\infty(K_b,K_{b+h}) \ge \min\{\eps_0,\dots,\eps_{t-2}\}.
  \end{equation*}
\end{lemma}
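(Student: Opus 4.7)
The plan is to reduce this directly to the previous lemma by writing $h$ in the form treated there. Since $h$ is not a multiple of $2^{t-1}$, in particular $h\ne 0$, so we can factor $h = 2^s m$ with $m$ odd and $s\ge 0$. The hypothesis that $h$ is not a multiple of $2^{t-1}$ forces $s\le t-2$: indeed, if $s\ge t-1$, then $2^{t-1}$ would divide $2^s m = h$, contradicting the assumption.

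With this factorization, $h$ is an odd multiple of $2^s$ for some $s\in[0,t-2]$, so Lemma~\ref{L:solenoid-notLYC-upper-bound-dist-infty1} applies and yields $\dist_\infty(K_b,K_{b+h}) \ge \eps_s$ for every $b\in\SIGMA^t$. Since $s\in[0,t-2]$, this lower bound is at least $\min\{\eps_0,\dots,\eps_{t-2}\}$, finishing the proof.

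There is essentially no obstacle; the previous lemma already did the substantive combinatorial work (finding, for each $b$, an index $i$ realizing the distance $\eps_s$ via the outermost pair $K_{a_*},K_{a^*}$). The present statement is just the bookkeeping step that unifies all possible $2$-adic valuations of $h$ below $t-1$ into a single uniform bound, and so it is genuinely a one-line corollary.
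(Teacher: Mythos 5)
Your argument is correct and is exactly the paper's: the paper states the lemma follows immediately from Lemma~\ref{L:solenoid-notLYC-upper-bound-dist-infty1}, and your extraction of the $2$-adic valuation $s\le t-2$ of $h$ followed by taking the minimum over $s$ is precisely that intended one-line reduction.
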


\begin{lemma}\label{L:solenoid-notLYC-upper-bound-MAIN}
  Let $t\ge 2$ be such that $\eps_{t-2}=\min\{\eps_0,\dots,\eps_{t-2}\}$.
  Then
  \begin{equation*}
    \rdetuinfty (x,\eps_{t-2}) \le \frac 45 \,.
  \end{equation*}
\end{lemma}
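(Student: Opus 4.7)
The plan is to apply Lemma~\ref{L:corr-integ-and-intervals}\eqref{IT:L:corr-integ-and-intervals:5} at level $t$, reducing the claim to the combinatorial inequality
$$
  \frac{N_\infty(x,t,\eps_{t-2})}{N_1^\circ(x,t,\eps_{t-2})}\le \frac 45.
$$
The numerator is cleanly bounded via Lemma~\ref{L:solenoid-notLYC-upper-bound-dist-infty2}: under the minimality hypothesis, every pair $(a,b)\in\SIGMA^t\times\SIGMA^t$ whose difference $b-a$ is not a multiple of $2^{t-1}$ satisfies
$\dist_\infty(K_a,K_b)\ge \min\{\eps_0,\dots,\eps_{t-2}\}=\eps_{t-2}$,
so it is not counted in $N_\infty(x,t,\eps_{t-2})$. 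Only the pairs with $b\in\{a,a+2^{t-1}\}$ survive, and there are exactly $2\cdot 2^t=2^{t+1}$ of them; thus $N_\infty(x,t,\eps_{t-2})\le 2^{t+1}$.

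For the denominator I would decompose the count by great-grandparent $c\in\SIGMA^{t-2}$ and aim to show that, inside each such $c$, at least $10$ of the $16$ ordered grandchild pairs $(a,b)\in\{c_*,c_\circ,c^\circ,c^*\}^2$ satisfy $\diam(K_a\cup K_b)\le\eps_{t-2}$. Summing over great-grandparents would then give $N_1^\circ(x,t,\eps_{t-2})\ge 10\cdot 2^{t-2}=\tfrac 52\cdot 2^t$, and combining with the numerator bound would yield
$$
  \rdetuinfty(x,\eps_{t-2})\ \le\ \frac{2^{t+1}}{\tfrac 52\cdot 2^t}\ =\ \frac 45.
$$
The natural starting point for the per-great-grandparent count is the spatial order $K_{c_*}<K_{c_\circ}<K_{c^\circ}<K_{c^*}$ with $\dist(K_{c_*},K_{c^*})\le\eps_{t-2}$: since $K_{c_\circ}\cup K_{c^\circ}\subseteq[z_{c_*},y_{c^*}]$, a segment of length at most $\eps_{t-2}$, one immediately gets the four pairs $(a,b)\in\{c_\circ,c^\circ\}^2$. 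The missing six pairs would be those inside $\{c_*,c_\circ\}^2\cup\{c^\circ,c^*\}^2$ (minus the diagonals of $c_\circ,c^\circ$ already counted), and they would follow from the corresponding sibling-union bounds $\diam(K_{c_*}\cup K_{c_\circ})\le\eps_{t-2}$ and $\diam(K_{c^\circ}\cup K_{c^*})\le\eps_{t-2}$.

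The main obstacle is precisely this last step. The definition of $\eps_{t-2}$ only controls the \emph{outer gap} between $K_{c_*}$ and $K_{c^*}$ and says nothing a priori about the diameters of the inner sibling groups $K_{c_*}\cup K_{c_\circ}$ and $K_{c^\circ}\cup K_{c^*}$; these may in principle be large. To close the loop I would exploit that $K_{c_*}$ and $K_{c_\circ}$ share a common level-$(t-1)$ parent, view that parent itself inside the four-grandchild decomposition of a shallower great-grandparent, and use the minimality hypothesis $\eps_{t-2}=\min\{\eps_0,\dots,\eps_{t-2}\}$, possibly together with Lemma~\ref{L:solenoid-notLYC-upper-bound-epst}, to propagate the required diameter control from ancestors down to $c$. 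This bookkeeping with the nested grandchild structure is the technical core of the argument.
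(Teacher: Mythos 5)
Your reduction via Lemma~\ref{L:corr-integ-and-intervals}\eqref{IT:L:corr-integ-and-intervals:5} and your bound $N_\infty(x,t,\eps_{t-2})\le 2^{t+1}$ via Lemma~\ref{L:solenoid-notLYC-upper-bound-dist-infty2} coincide with the paper's argument, but the denominator step has a genuine gap, and it is exactly the step you flag as the technical core: the inequalities $\diam(K_{c_*}\cup K_{c_\circ})\le\eps_{t-2}$ and $\diam(K_{c^\circ}\cup K_{c^*})\le\eps_{t-2}$ are not just unproved, they fail in general, so the target of $10$ pairs per level-$(t-2)$ word $c$ is unattainable. The quantity $\eps_{t-2}$ only measures the gap between the outer grandchildren $K_{c_*},K_{c^*}$; nothing in the hypotheses bounds the lengths of these outer cylinders themselves, and a level-$t$ cylinder may be far longer than $\eps_{t-2}$ (compare the construction in the proof of Theorem~\ref{T:example}, where one cylinder at each level is made very long while all relevant gaps are tiny). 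Your proposed rescue via the minimality hypothesis points in the wrong direction: $\eps_{t-2}=\min\{\eps_0,\dots,\eps_{t-2}\}$ makes $\eps_{t-2}$ the \emph{smallest} of the ancestor quantities, so any diameter control propagated down from a shallower level is a bound by some $\eps_s\ge\eps_{t-2}$, not by $\eps_{t-2}$; and if the common parent of $K_{c_*},K_{c_\circ}$ is itself an outer grandchild of its own level-$(t-3)$ ancestor, no $\eps_s$ controls it at all. In the bad configuration only the four pairs drawn from $\{c_\circ,c^\circ\}$ survive, your bound degrades to $N_1^\circ\ge 2^t$, and the estimate becomes $2^{t+1}/2^t=2$, which proves nothing.

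The paper's proof never needs diameter control on the outer grandchildren. It decomposes both counts by the shift $h$, writing $N_m=\sum_h n_m(h)$, and uses Lemma~\ref{L:solenoid-notLYC-upper-bound-dist-infty2} to confine $N_\infty$ to the shifts $h\in\{0,2^{t-1}\}$; then, for the two shifts $h=2^{t-2}$ and $h=3\cdot 2^{t-2}$, which are disjoint from these, it uses \eqref{EQ:a*(s)} and Lemma~\ref{L:solenoid-notLYC-upper-bound-words} to find, inside each $c\in\SIGMA^{t-2}$, a word $b$ with $\{K_b,K_{b+h}\}=\{K_{c_\circ},K_{c^\circ}\}$ --- only the middle pair is used, and its union automatically has diameter smaller than $\dist(K_{c_*},K_{c^*})\le\eps_{t-2}$. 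This yields at least $2^{t-1}$ pairs counted in $N_1^\circ$ at shifts where $N_\infty$ counts none, so the paper compares the two counts \emph{additively}, $N_1^\circ\ge N_\infty+2^{t-1}$, and finishes by monotonicity of $u\mapsto u/(u+2^{t-1})$ together with $N_\infty\le 2^{t+1}$, giving $4/5$. If you rework your argument, replace the absolute target $N_1^\circ\ge\tfrac 52\cdot 2^t$ by this relative comparison; note that your two ordered cross pairs $(c_\circ,c^\circ),(c^\circ,c_\circ)$ already sit at the shifts $2^{t-2}$ and $3\cdot 2^{t-2}$, so the middle-pair bookkeeping you set up is all that is needed.
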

\begin{proof}
  For abbreviation, write $\eps, N_m, N_m^\circ$ instead of $\eps_{t-2}, N_m(x,t,\eps_{t-2}), N_m^\circ(x,t,\eps_{t-2})$, respectively.
  For $m\in\{1,\infty\}$ and $h\in[0,2^t)$
  put
  $n_m(h)=\card\{b\in\SIGMA^t:\ \dist_m(K_b,K_{b+h})<\eps \}$
  and
  $n_m^\circ(h)=\card\{b\in\SIGMA^t:\ \diam_m(K_b, K_{b+h})\le\eps \}$;
  note that $N_m=\sum_h n_m(h)$ and $N_m^\circ=\sum_h n_m^\circ(h)$.
  By Lemma~\ref{L:solenoid-notLYC-upper-bound-dist-infty2},
  $n_\infty(h)=0$ for every $h\not\in\{0,2^{t-1}\}$; thus
  \begin{equation}\label{EQ:L:solenoid-notLYC-upper-bound-MAIN:1}
    N_\infty \le 2\cdot 2^t.
  \end{equation}
  On the other hand, if $h=2^{t-2}$ or $h=3\cdot 2^{t-2}$ then
  $n_1^\circ(h)\ge (1/4)\cdot2^t$;
  to see this, use \eqref{EQ:a*(s)}, Lemma~\ref{L:solenoid-notLYC-upper-bound-words}, and the fact
  that $\diam(K_{a_\circ}\cup K_{a^\circ}) < \dist(K_{a_*},K_{a^*})\le\eps$
  for every $a\in\SIGMA^{t-2}$.
  Hence
  \begin{equation}\label{EQ:L:solenoid-notLYC-upper-bound-MAIN:2}
    N_1^\circ - N_\infty \ge 2\cdot \frac {2^t}{4} = 2^{t-1}.
  \end{equation}
  Applying Lemma~\ref{L:corr-integ-and-intervals}\eqref{IT:L:corr-integ-and-intervals:5}
  and inequalities
  \eqref{EQ:L:solenoid-notLYC-upper-bound-MAIN:1},
  \eqref{EQ:L:solenoid-notLYC-upper-bound-MAIN:2} we conclude that
  \begin{equation*}
    \rdetuinfty (x,\eps)
    \le
    \frac{N_\infty}{N_1^\circ}
    \le
    \frac{N_\infty}{N_\infty + 2^{t-1}}
    \le
    \frac{1}{1+\frac 14}
    = \frac 45\,.
  \end{equation*}
\end{proof}

From
Lemmas~\ref{L:solenoid-notLYC-upper-bound-MAIN} and \ref{L:solenoid-notLYC-upper-bound-epst}
we immediately obtain the following proposition stating that, for solenoidal $\omega$-limit sets,
we cannot have recurrence determinism converging to one as $\eps\to 0$.
It is interesting that we even have an upper bound for limes inferior
depending neither on $f$ nor on $x$.

\begin{proposition}\label{P:solenoid-notLYC-upper-bound}
  Let $f:I\to I$ be continuous and $x\in I$ be such that $\omega_f(x)$ is solenoidal
  and does not contain non-separable points.
  Then
  \begin{equation*}
    \liminf_{\eps\to 0} \rdetuinfty (x,\eps) \le\frac 45 \,.
  \end{equation*}
\end{proposition}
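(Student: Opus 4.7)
The plan is to combine Lemmas~\ref{L:solenoid-notLYC-upper-bound-epst} and \ref{L:solenoid-notLYC-upper-bound-MAIN} by finding a sequence of indices along which $\eps_t$ hits its running minimum and tends to zero. From Lemma~\ref{L:solenoid-notLYC-upper-bound-epst} we already know $\eps_t\to 0$; the issue is that Lemma~\ref{L:solenoid-notLYC-upper-bound-MAIN} only applies at indices $t$ where $\eps_{t-2}$ is a running minimum of $\eps_0,\dots,\eps_{t-2}$. So we need to exhibit infinitely many such indices with $\eps_{t-2}$ arbitrarily small.

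Concretely, I would define the running minimum $\mu_n=\min\{\eps_0,\dots,\eps_n\}$ and let
$S=\{n\in\NNN_0:\mu_n<\mu_{n-1}\}$ (with the convention $\mu_{-1}=+\infty$).
Since $(\mu_n)_n$ is non-increasing and Lemma~\ref{L:solenoid-notLYC-upper-bound-epst} gives $\mu_n\to 0$, the sequence $(\mu_n)_n$ must strictly decrease infinitely often, so $S$ is infinite. Enumerate $S$ as $s_1<s_2<\dots$; by definition of $S$, for each $k$ the value $\mu_{s_k}$ equals $\eps_{s_k}$, hence
\[
  \eps_{s_k}=\min\{\eps_0,\dots,\eps_{s_k}\}
  \quad\text{and}\quad
  \eps_{s_k}\to 0.
\]

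Apply Lemma~\ref{L:solenoid-notLYC-upper-bound-MAIN} with $t=s_k+2\ge 2$ to conclude $\rdetuinfty(x,\eps_{s_k})\le 4/5$ for every $k$. Since $\eps_{s_k}\to 0$, taking limits yields
\[
  \liminf_{\eps\to 0}\rdetuinfty(x,\eps)
  \le \liminf_{k\to\infty}\rdetuinfty(x,\eps_{s_k}) \le \frac{4}{5},
\]
which is the claim.

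The bulk of the work has already been done in the two preceding lemmas, so there is no real obstacle here beyond the small bookkeeping of extracting the right subsequence; the only thing one has to be careful about is to use the running-minimum property (not just $\eps_t\to 0$) to satisfy the hypothesis of Lemma~\ref{L:solenoid-notLYC-upper-bound-MAIN}.
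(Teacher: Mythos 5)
Your proof is correct and follows essentially the same route as the paper, which states the proposition as an immediate consequence of Lemmas~\ref{L:solenoid-notLYC-upper-bound-MAIN} and \ref{L:solenoid-notLYC-upper-bound-epst}. Your explicit extraction of the running-minimum subsequence (valid since each $\eps_t>0$, as the intervals $K_{a_*},K_{a^*}$ are disjoint closed intervals, so the non-increasing running minima must strictly decrease infinitely often) is exactly the bookkeeping the paper leaves implicit.
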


\subsection{Proofs of Theorems~\ref{T:main} and \ref{T:omega}}\label{SS:proofs}

\begin{proof}[Proof of Theorem~\ref{T:omega}]
  It suffices to prove implications from left to right.
  Propositions~\ref{P:finite-omega} and \ref{P:solenoid-LYC} prove
  \eqref{IT:omega-finite} and \eqref{IT:omega-withLY} of Theorem~\ref{T:omega}.
  Assertion \eqref{IT:omega-nonLY} is shown in Propositions~\ref{P:solenoid-notLYC-lower-bound}
  and \ref{P:solenoid-notLYC-upper-bound}.
\end{proof}

\begin{proof}[Proof of Theorem~\ref{T:main}]
  We start by proving implications from left to right.
  The implication from \eqref{IT:T:main:1} follows immediately from Proposition~\ref{P:finite-omega}.
  If $f$ is not Li-Yorke chaotic then the $\omega$-limit sets of $f$ are either finite, or
  solenoidal without non-separable points  \cite[Theorem~2.2]{smital1986chaotic}.
  Thus Propositions~\ref{P:finite-omega} and \ref{P:solenoid-notLYC-lower-bound}
  show the implication from \eqref{IT:T:main:2}.

  Assume now that $f$ is Li-Yorke chaotic. If the topological entropy of $f$ is zero then,
  by \cite[Theorem~2.2]{smital1986chaotic}, there is $x\in I$ such that
  $\omega_f(x)$ contains non-separable points. Let $C$ be the (Cantor) set from
  Lemma~\ref{L:solenoidal-blokh}. Then $\omega_f(y)=C$ for every $y\in C$.
  Since $C$ contains non-separable points $z$ and $z'$ by Lemma~\ref{L:nonseparable-in-C},
  Proposition~\ref{P:solenoid-LYC} implies that
  $\rdetinfty(y,\eps)=0$ for every $y\in C$ and every $\eps\le \varrho(z,z')$.
  If the topological entropy of $f$ is positive, then \cite[Theorem~B]{spitalsky2016local}
  asserts that there is a Cantor set $C\subseteq I$ such that
  $(\cccu_m(x,\eps))_m$ decreases to zero exponentially fast
  (and hence $\rdetinfty(x,\eps)=\ccc_\infty(x,\eps)=0$)
  for every $x\in C$ and every
  sufficiently small $\eps>0$. Thus also the implication from \eqref{IT:T:main:3} is proved.

  Now we prove implications from right to left.
  Since the conditions on recurrence determinism
  from \eqref{IT:T:main:2} and \eqref{IT:T:main:3} are
  mutually exclusive, the equivalences from \eqref{IT:T:main:2} and \eqref{IT:T:main:3}
  are immediate. Assume now that $\rdetinfty(x,\eps)=1$ for every $x\in I$ and every
  sufficiently small $\eps>0$. By \eqref{IT:T:main:2}, $f$ is not Li-Yorke chaotic
  and thus have zero entropy.
  So every $\omega$-limit set of $f$ is finite by Theorem~\ref{T:omega}\eqref{IT:omega-finite}.
\end{proof}

\section{Proof of Theorem~\ref{T:example}}\label{S:construction}
Put $\SIGMA^t=\{0,1\}^t$ for $t\ge 1$, $\SIGMA^0=\{o\}$, and $\SIGMA^*=\bigcup_{t}\SIGMA^t$.
We say that a system $\KKk=\{K_a:\ a\in\SIGMA^*\}$
of disjoint non-degenerate closed subintervals $K_a=[y_a,z_a]$ of $I$
is \emph{admissible} if the following hold:
\begin{enumerate}
  \item\label{IT:K:1}
    $K_{o}=I$;
  \item\label{IT:K:2}
    $y_a=y_{a0}<y_{a1}<z_{a0}<z_{a1}=z_a$ for every $a\in\SIGMA^*$;
  \item\label{IT:K:3}
    $\nu_t = \max\{\diam K_a:\ a\in\SIGMA^t\}$ converges to $0$ as $t\to\infty$.
\end{enumerate}
Put $Q_t=\bigsqcup_{a\in\SIGMA^t} K_a$ and $Q=Q(\KKk)=\bigcap Q_t$; note that $Q$ is
a Cantor set.

\begin{lemma}\label{L:fK}
  Let $\KKk=\{K_a:\ a\in\SIGMA^*\}$ be admissible.
  Then there is a Li-Yorke non-chaotic continuous map
  $f:I\to I$ of type $2^\infty$ such that $Q(\KKk)$ is the only infinite
  $\omega$-limit set of $f$.
  Moreover, $f(K_a)=K_{a+1}$ for every $a\in\SIGMA^*$.
\end{lemma}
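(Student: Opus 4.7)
The plan is to construct $f$ by first specifying it on the Cantor set $Q = Q(\KKk)$ as the dyadic odometer and then extending continuously to $I$ via affine interpolations on the gaps. Every $x \in Q$ is uniquely coded by some $\alpha(x) \in \SIGMAinf$ via $\{x\} = \bigcap_t K_{\alpha(x)_{[0,t)}}$ (because $\diam K_a \le \nu_{|a|} \to 0$ by admissibility~\eqref{IT:K:3}), and this coding is a homeomorphism from $Q$ onto the Cantor space $\SIGMAinf$. Define $f|_Q$ by $\alpha(f(x)) = \alpha(x) + 1$, the dyadic odometer; this is a minimal homeomorphism of $Q$, so $\omega_f(x) = Q$ for every $x \in Q$.

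To extend $f$ to $I$, on each closed gap $\overline{G_a} = [z_{a0}, y_{a1}]$ (both endpoints in $Q$) let $f$ be the affine interpolation of the prescribed boundary values $f(z_{a0})$ and $f(y_{a1})$; both already lie in $K_{a+1}$ (since $f|_Q$ sends $K_a \cap Q$ onto $K_{a+1}\cap Q$ and $K_{a+1}$ is an interval), so the image of the gap is contained in $K_{a+1}$. Continuity of the resulting $f:I\to I$ is immediate from continuity of $f|_Q$, affinity on each gap, and the matching of boundary values. Moreover, for every $a\in\SIGMA^*$, $f(K_a) = K_{a+1}$: by inductive use of the addition-with-carry on $\SIGMA^{|a|+|c|}$, each sub-gap $G_{ac}\subseteq K_a$ is mapped into $K_{(ac)+1}\subseteq K_{a+1}$ (regardless of whether the carry through $c$ overflows into $a$), and together with $f(K_a\cap Q) = K_{a+1}\cap Q$ this gives $f(K_a)\subseteq K_{a+1}$; the reverse inclusion holds because $+1$ acts as a bijection on codes, sending sub-gaps of $K_a$ bijectively onto sub-gaps of $K_{a+1}$. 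Each $K_a$ is a $2^{|a|}$-periodic non-degenerate interval for $f$ (the images $K_{a+i}$ with $i\in[0,2^{|a|})$ are pairwise disjoint), and distinct $x,x'\in Q$ whose codes first differ at index $t-1$ are separated by the disjoint periodic intervals $K_{\alpha(x)_{[0,t)}}, K_{\alpha(x')_{[0,t)}}$; hence $Q$ contains no non-separable pair.

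It remains to show that $Q$ is the only infinite $\omega$-limit set and that $f$ is of type $2^\infty$. For $y\in I\setminus Q$, $y$ lies in some gap $G_a$ with $|a|=t$, and $f^{2^t}$ maps $K_a$ into itself. By arranging the interpolations on the gaps to have sufficiently small slopes at each scale --- which is possible by replacing each single affine piece with a short piecewise-linear extension of bounded slope, the boundary values being fixed by the odometer --- one forces $f^{2^t}|_{K_a}$ to be a contraction with a unique attracting fixed point. Consequently, the orbit of $y$ is either eventually $2^t$-periodic or enters $Q_{t+1}$; iterating over $t$ shows that $\omega_f(y)$ is either finite or equal to $Q$, so $Q$ is the unique infinite $\omega$-limit set. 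The fixed point of $f^{2^t}|_{K_{0^t}}$ has period exactly $2^t$ because $K_{0^t}$ and $K_{0^t+2^s}$ are disjoint for every $s<t$; hence $f$ has periodic orbits of period exactly $2^t$ for every $t\ge 0$ and, by the slope control on gaps, no others, so $f$ is of type $2^\infty$. Li-Yorke non-chaoticity then follows from the zero-entropy criterion of \cite[Theorem~2.2]{smital1986chaotic} recalled in Section~\ref{SS:dynamical-systems}, since the only infinite $\omega$-limit set $Q$ contains no non-separable pair. The main obstacle in the plan is the last step: arranging the gap extensions contractively enough to rule out auxiliary infinite $\omega$-limit sets or non-power-of-$2$ periods, which must be done while simultaneously respecting the odometer-prescribed boundary values; the admissibility condition $\nu_t\to 0$ and the freedom to subdivide each gap into several affine pieces give us exactly the slack required.
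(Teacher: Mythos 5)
Your construction (odometer on $Q(\KKk)$ via the coding, interpolation on the gaps) is a genuinely different route from the paper, which simply conjugates Delahaye's map \cite{delahaye1980fonctions} by an increasing homeomorphism $h$ with $h(K_a)=\tilde K_a$ and lets conjugacy invariance deliver all dynamical properties at once. The first two thirds of your argument (coding, continuity of the interpolated extension, $f(K_a)=K_{a+1}$, separability of distinct points of $Q$) are fine. The gap is in the final step, which is the only place where ``unique infinite $\omega$-limit set'', ``type $2^\infty$'' and hence Li-Yorke non-chaoticity are actually proved. You cannot ``force $f^{2^t}|_{K_a}$ to be a contraction'': since $f(K_b)=K_{b+1}$ for every $b$ (a conclusion of the lemma, satisfied by your construction), $f^{2^t}$ maps $K_a$ \emph{onto} $K_a$, and a surjective self-map of a non-degenerate compact interval is never a contraction. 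Nor is the ``slope control'' available: the system $\KKk$ is \emph{given}, and the closed gap $[z_{1^t0},y_{1^{t+1}}]$ of $K_{1^t}$ must be mapped onto an interval joining $f(z_{1^t0})=z_{0^t}$ to $f(y_{1^{t+1}})=y_{0^{t+1}1}$ (compute the codes $1^t01^\infty+1=0^t11^\infty$ and $1^{t+1}0^\infty+1=0^{t+1}10^\infty$), an interval of length at least $\diam K_{0^t1}$, while the gap itself may be arbitrarily short; with fixed boundary values no subdivision into affine pieces can reduce the maximal slope below the ratio of these lengths. So the gap dynamics is genuinely expansive at the $1^t\to 0^t$ transition, the claims ``the orbit of $y$ is either eventually $2^t$-periodic or enters $Q_{t+1}$'' and ``no periods other than powers of $2$'' are unsupported, and with them the hypotheses you need to invoke Sm\'{\i}tal's criterion.

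The argument can be completed without any slope control, using monotonicity instead of contraction. Each $Q_t$ is $f$-invariant, so if the orbit of $y\notin Q$ enters every $Q_t$ then $\omega_f(y)\subseteq Q$, hence $\omega_f(y)=Q$ by minimality of the odometer; otherwise there is a largest such $t$, and the orbit is eventually confined to the open level-$t$ gaps $G_a$, $a\in\SIGMA^t$. On these, $f$ acts by monotone maps: $\overline{G_a}$ goes affinely onto $\overline{G_{a+1}}$ for $a\ne 1^t$, and only one monotone branch of $f|_{\overline{G_{1^t}}}$ lands back in a level-$t$ gap (in $G_{0^t}$). Hence the induced return map to $G_{0^t}$ is monotone, its orbits converge to fixed or period-two points, and $\omega_f(y)$ is a periodic orbit whose period is a power of $2$ (the intervals $K_{a+i}$, $0\le i<2^t$, being pairwise disjoint); surjectivity of $f^{2^t}|_{K_a}$ (not contraction) gives a periodic point of period exactly $2^t$ for every $t$. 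Alternatively, adopt the paper's shortcut and transport Delahaye's map by the homeomorphism matching the two admissible systems; either way the contraction/slope-control mechanism must be removed.
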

Every map $f$ with properties from the lemma will be called \emph{associated to $\KKk$}.
\begin{proof}
  Let $\tilde\KKk=\{\tilde{K}_a=[\tilde y_a,\tilde z_a]:\ a\in\SIGMA^*\}$ be
  the (admissible) system of intervals  defining the Cantor ternary set;
  that is, $\tilde y_{a1}=\tilde y_a + (1/3) (\tilde z_a - \tilde y_a)$
  and $\tilde z_{a0}=\tilde z_a - (1/3) (\tilde z_a - \tilde y_a)$
  for every $a\in\SIGMA^*$.
  Let $h$ be an increasing homeomorphism of $I$ such that
  $h(K_a)=\tilde K_a$ for every $a$.
  Define $f=h^{-1}\circ \tilde f \circ h$, where $\tilde f$ is Delahaye's map
  (see \cite{delahaye1980fonctions} or \cite[Example~5.56]{ruette2017chaos}).
  Since $f$ is conjugate to $\tilde f$, the dynamical properties of $f$
  are the same as those of $\tilde f$.
  Further, $f(K_a)=h^{-1}(\tilde f(\tilde K_a))=h^{-1}(\tilde K_{a+1})=K_{a+1}$ for every $a$.
\end{proof}

\begin{lemma}\label{L:fK-det1}
  Let $f:I\to I$ be associated to an admissible system $\KKk=\{K_a=[y_a,z_a]:\ a\in\SIGMA^*\}$,
  and $x\in I$ be such that $\omega_f(x)=Q(\KKk)$.
  Let $t\in\NNN$ and $\eps>0$ satisfy
  \begin{equation}\label{EQ:L:fK-det1}
    \diam(K_a)\le\eps \le \dist(K_a,K_b)
    \qquad\text{for every }  a\ne b
    \text{ from }\SIGMA^t.
  \end{equation}
  Then $\rdet_m(x,\eps)=1$ for every $m\in\NNN\cup\{\infty\}$.
\end{lemma}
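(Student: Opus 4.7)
The plan is to reduce everything to Lemma~\ref{L:corr-integ-and-intervals}\eqref{IT:L:corr-integ-and-intervals:5} by counting pairs of intervals at level~$t$, after placing the admissible family $\KKk$ into the solenoidal framework of Section~\ref{SS:solenoidal-omega-limit}.

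First I would identify the data. Put $p_s = 2^s$ and $J_s = K_{0^s}$ for $s \ge 0$. Admissibility gives $J_{s+1} \subseteq J_s$ and $\diam J_s \to 0$, while $f(K_a) = K_{a+1}$ from Lemma~\ref{L:fK} yields $f^i(J_s) = K_{0^s + i}$ for $i \in [0, 2^s)$. Since the $K_a$'s at level $s$ are pairwise disjoint, so are the iterates $J_s, f(J_s), \dots, f^{2^s-1}(J_s)$; and $f^{2^s}(J_s) = J_s$, so $J_s$ is $2^s$-periodic. The intervals $K_a$ of Section~\ref{SS:solenoidal-omega-limit} therefore coincide with those of $\KKk$, and $\omega_f(x) = Q(\KKk)$ is solenoidal (and infinite), so Lemma~\ref{L:corr-integ-and-intervals} is applicable.

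Next I would evaluate $N_m(x, t, \eps)$ and $N_m^\circ(x, t, \eps)$ directly from the hypothesis \eqref{EQ:L:fK-det1}. For every diagonal pair $(a, a)$, $\dist_m(K_a, K_a) = 0 < \eps$ and $\diam_m(K_a, K_a) = \max_{k \in [0,m)} \diam K_{a+k} \le \eps$, so $(a, a)$ lies in both $N_m$ and $N_m^\circ$; this contributes $2^t$ pairs to each count. For an off-diagonal pair $(a, b)$ with $a \ne b$ in $\SIGMA^t$, addition on $\SIGMA^t$ is a bijection, so $a + k \ne b + k$ for every $k$; hence $\dist(K_{a+k}, K_{b+k}) \ge \eps$ by hypothesis and therefore $\dist_m(K_a, K_b) \ge \eps$, which excludes $(a, b)$ from $N_m$ and \emph{a fortiori} from $N_m^\circ \subseteq N_m$. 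Consequently $N_m(x, t, \eps) = N_m^\circ(x, t, \eps) = 2^t$ for every $m \in \NNN \cup \{\infty\}$.

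Plugging these counts into Lemma~\ref{L:corr-integ-and-intervals}\eqref{IT:L:corr-integ-and-intervals:5} gives
$$
1 \;=\; \frac{N_m^\circ(x, t, \eps)}{N_1(x, t, \eps)} \;\le\; \rdetl_m(x, \eps) \;\le\; \rdetu_m(x, \eps) \;\le\; \frac{N_m(x, t, \eps)}{N_1^\circ(x, t, \eps)} \;=\; 1,
$$
so $\rdet_m(x, \eps)$ exists and equals $1$. The only delicate point is the interplay between the strict inequality $\dist_m < \eps$ built into the definition of $N_m$ and the non-strict hypothesis $\dist(K_a, K_b) \ge \eps$: it is precisely this combination that rules out the off-diagonal pairs and makes the upper and lower bounds coincide. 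No finer analysis of orbit endpoints is required, because this cancellation happens already at the level of the symbolic intervals.
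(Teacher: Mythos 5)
Your proof is correct and follows essentially the same route as the paper: the paper likewise observes that hypothesis \eqref{EQ:L:fK-det1} forces $N_m^\circ(x,t,\eps)=N_m(x,t,\eps)=2^t$ (only diagonal pairs survive) and then concludes via Lemma~\ref{L:corr-integ-and-intervals}\eqref{IT:L:corr-integ-and-intervals:5}. Your additional verification that the admissible system fits the solenoidal framework of Section~\ref{SS:solenoidal-omega-limit} is a detail the paper leaves implicit, and is a welcome addition rather than a deviation.
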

\begin{proof}
  By \eqref{EQ:L:fK-det1}, $N_m^\circ(x,t,\eps)=N_m(x,t,\eps)
  =\card\{(a,a):\ a\in\SIGMA^t\}=2^t$
  for every $m$. Hence $\rdet_m(x,\eps)=1$
  by Lemma~\ref{L:corr-integ-and-intervals}\eqref{IT:L:corr-integ-and-intervals:5}.
\end{proof}

\begin{lemma}\label{L:fK-det0}
  Let $f:I\to I$ be associated to an admissible system $\KKk=\{K_a=[y_a,z_a]:\ a\in\SIGMA^*\}$,
  and $x\in I$ be such that $\omega_f(x)=Q(\KKk)$.
  Let $t>q\ge 1$ and $\eps>0$ satisfy
  \begin{equation}\label{EQ:L:fK-det0}
    \diam(K_a)\le\eps \le \dist(K_{1^{t-1}0},K_{1^t})
    \qquad\text{for every }  a\in\SIGMA^{t-q}\setminus\{1^{t-q}\}.
  \end{equation}
  Then $\rdetu_\infty(x,\eps)\le 2^{1-q}$.
\end{lemma}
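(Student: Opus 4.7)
The plan is to apply Lemma~\ref{L:corr-integ-and-intervals}\eqref{IT:L:corr-integ-and-intervals:5}, giving
$$\rdetu_\infty(x,\eps)\ \le\ \frac{N_\infty(x,t,\eps)}{N_1^\circ(x,t,\eps)},$$
and then to show that $N_\infty(x,t,\eps)=2^t$ while $N_1^\circ(x,t,\eps)\ge (2^{t-q}-1)\cdot 2^{2q}$. A short arithmetic check using $t\ge q+1$ then yields the claimed bound $2^{1-q}$.

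The heart is to prove that $\dist(K_c,K_{1^t})\ge\eps$ for every $c\in\SIGMA^t\setminus\{1^t\}$. First I extract from the hypothesis and admissibility the key inequality
$$y_{1^t}\ \ge\ z_{1^{t-1}0}+\eps\ \ge\ y_{1^{t-1}0}+\eps\ =\ y_{1^{t-1}}+\eps,$$
using $y_{a0}=y_a$. Then I split on where $c$ sits. If $c=1^{t-1}0$, the hypothesis directly gives $\dist\ge\eps$. Otherwise $b:=\pi_{t-1}(c)\ne 1^{t-1}$. In the sub-case $\pi_{t-q}(b)=1^{t-q}$, the interval $K_b\subseteq K_{1^{t-q}}$ is not the right-most level-$(t-1)$ sub-interval of $K_{1^{t-q}}$ (which is $K_{1^{t-1}}$), so $z_b<y_{1^{t-1}}$. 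In the remaining sub-case $\pi_{t-q}(b)=a^*\ne 1^{t-q}$, the interval $K_{a^*}$ is not the right-most level-$(t-q)$ interval, giving $z_b\le z_{a^*}<y_{1^{t-q}}\le y_{1^{t-1}}$. In either case $z_c\le z_b<y_{1^{t-1}}\le y_{1^t}-\eps$, hence $\dist(K_c,K_{1^t})\ge y_{1^t}-z_c>\eps$. The repeated use of ``right-most'' follows inductively from admissibility, as $K_{1^s}=K_{1^{s-1}1}$ is always the right child of $K_{1^{s-1}}$.

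Since $a\mapsto a+1$ cycles $\SIGMA^t$ with period $2^t$, $\dist_\infty(K_a,K_b)=\max_i\dist(K_{a+i},K_{b+i})$ depends on $a,b$ only through $h=b-a$. For any off-diagonal pair ($h\ne 0$), choosing $i=1^t-a$ yields $\dist_\infty(K_a,K_b)\ge \dist(K_{1^t},K_{1^t+h})\ge\eps$ by the preceding claim, so only the $2^t$ diagonal pairs lie in $N_\infty(x,t,\eps)$. For the lower bound on $N_1^\circ(x,t,\eps)$, each of the $(2^{t-q}-1)\cdot 2^{2q}$ ordered pairs $(a,b)$ with $\pi_{t-q}(a)=\pi_{t-q}(b)=a^*\ne 1^{t-q}$ satisfies $K_a\cup K_b\subseteq K_{a^*}$, and $\diam K_{a^*}\le\eps$ by hypothesis. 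Combining,
$$\rdetu_\infty(x,\eps)\ \le\ \frac{2^t}{(2^{t-q}-1)\cdot 2^{2q}}\ \le\ 2^{1-q},$$
the last inequality being equivalent to $2^{q+1}\le 2^t$, which holds since $t\ge q+1$.

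The main technical delicacy is the case analysis for the claim: one must invoke admissibility (disjoint siblings with a positive gap) to upgrade ``$K_b$ not right-most'' to the strict inequality $z_b<y_{1^{t-1}}$, and verify that $K_{1^s}$ is right-most at every relevant scale. Note that $\dist\ge\eps$ (rather than $>\eps$) is already sufficient to exclude a pair from $N_\infty$, whose defining inequality is strict.
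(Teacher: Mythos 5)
Your proposal is correct and follows essentially the same route as the paper's proof: bound $\rdetu_\infty$ by $N_\infty(x,t,\eps)/N_1^\circ(x,t,\eps)$ via Lemma~\ref{L:corr-integ-and-intervals}\eqref{IT:L:corr-integ-and-intervals:5}, show $N_\infty\le 2^t$ by shifting any off-diagonal pair so that one coordinate becomes $1^t$ and invoking \eqref{EQ:L:fK-det0}, bound $N_1^\circ\ge(2^{t-q}-1)\cdot 2^{2q}$ by pairs sharing a length-$(t-q)$ prefix other than $1^{t-q}$, and finish with the same arithmetic using $t\ge q+1$. The only difference is that you spell out explicitly (via the right-most-interval case analysis) why $\dist(K_{1^t},K_c)\ge\eps$ for every $c\ne 1^t$, a step the paper leaves implicit in its appeal to \eqref{EQ:L:fK-det0} and admissibility.
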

\begin{proof}
  For any distinct $a,b\in\SIGMA^t$ we have $\dist_\infty(K_a,K_b)\ge\eps$.
  (To see this, take $i\in[0,2^t)$ such that $a+i=1^t$; since $b+i\ne a+i$, it holds that
  $\dist_\infty(K_a,K_b)\ge \dist(K_{1^t},K_{b+i})\ge\eps$ by \eqref{EQ:L:fK-det0}.)
  Thus $N_\infty(x,t,\eps)\le 2^t$.

  Take arbitrary $a\in\SIGMA^{t-q}\setminus\{1^{t-q}\}$ and any $b,c\in\SIGMA^t\cap [a]$.
  Then $\diam(K_b\cup K_c)\le \diam K_a \le\eps$ and hence
  $N_1^\circ (x,t,\eps)\ge (2^{t-q}-1)\cdot (2^q)^2$.
  By Lemma~\ref{L:corr-integ-and-intervals}\eqref{IT:L:corr-integ-and-intervals:5},
  \begin{equation*}
    \rdetu_\infty(x,\eps)
    \le \frac{2^t}{(2^{t-q}-1)\cdot (2^q)^2}
    \le \frac{1}{2^q\cdot\left(  1-2^{q-t} \right)}
    \le 2^{1-q}.
  \end{equation*}
\end{proof}

\begin{proof}[Proof of Theorem~\ref{T:example}]
  Put $t_1=1$ and, for every $n\ge 1$, $t_n'=t_n+(n+1)$ and $t_{n+1}=t_n'+1$.
  We are going to inductively define an admissible system $\KKk=\{K_a:\ a\in\SIGMA^*\}$;
  we will implicitly assume that all intervals $K_a$ are defined in such a way that
  \eqref{IT:K:2} from the definition of admissibility is satisfied.
  Put $K_o=[0,1]$, $K_0=[0,1/3]$, and $K_1=[2/3,1]$; then we have
  \eqref{EQ:L:fK-det1} with $t=t_1$ and $\eps=\eps_1=1/3$.

  Assume that, for some $n\in\NNN$ and $t=t_n$, we have already defined $K_a$ for every $a$ with
  $\abs{a}\le t$.
  Put $\eps_n=\max\{\diam K_a:\ a\in\SIGMA^t\}$,  $\delta=\diam K_{1^t}$,
  and take positive $\eps_n'<\delta/n$.
  For every $b\in\SIGMA^{t+1}$ define $K_b$
  in such a way that
  $\diam K_b <\eps_n'$ if $b\ne 1^{t+1}$,
  and $\diam K_b>n\eps_n'$ if $b= 1^{t+1}$.
  For $t+1<s\le t+n+1=t_n'$ and $b\in\SIGMA^s$ define $K_b$ arbitrarily
  requiring only that
  $\dist(K_{1^{s-1}0},K_{1^s})>\eps_n'$.
  In this way  we obtain that
  \eqref{EQ:L:fK-det0} is satisfied with $t=t'_n$, $q=n$, and $\eps=\eps'_n$.

  For $t=t'_{n+1}$ put $\eps_{n+1}=(1/3)\min\{\diam K_a:\ a\in\SIGMA^{t-1}\}$
  and define $K_b$ ($b\in\SIGMA^t$) in such a way that $\diam K_b=\eps_{n+1}$.
  Then \eqref{EQ:L:fK-det1} is satisfied with $t=t'_{n+1}$ and $\eps=\eps_{n+1}$.

  In this way we obtain an admissible system
  $\KKk=\{K_a=[y_a,z_a]:\ a\in\SIGMA^*\}$
  and sequences $(\eps_n)_n$, $(\eps'_n)_n$ decreasing to zero
  such that, for every $n$,
  \begin{itemize}
    \item
      \eqref{EQ:L:fK-det1} is satisfied with $t=t_n$ and $\eps=\eps_n$;
    \item
      \eqref{EQ:L:fK-det0} is satisfied with $t=t'_n$, $q=n$, and $\eps=\eps'_n$.
  \end{itemize}
  Let $f:I\to I$ be associated to $\KKk$.
  Put $C=Q(\KKk)$ and take any $x\in C$. 
  By Lemma~\ref{L:solenoidal-blokh}\eqref{IT:solenoidal-blokh}
  and the fact that $Q(\KKk)$ is totally disconnected, $C$
  is equal to the set $C$ from Lemma~\ref{L:solenoidal-blokh}
  and $\omega_f(x)=C$.
  Thus, by Lemmas~\ref{L:fK-det1} and \ref{L:fK-det0},
  $\rdetinfty(x,\eps_n)=1$ and $\rdetu_\infty(x,\eps'_n)\le 2^{1-n}$
  for every $n$. This proves Theorem~\ref{T:example}.
\end{proof}

\section*{Acknowledgements}
Substantive feedback from Michaela Mihokov\'a
is gratefully acknowledged.
This research is an outgrowth of the project ``SPAMIA'', M\v S SR-3709/2010-11,
supported by the Ministry of Education, Science, Research and Sport of the Slovak Republic,
under the heading of the state budget support for research and development.
The author also acknowledges support from VEGA~1/0786/15 and APVV-15-0439 grants.

\bibliography{rqa-intr}

\end{document}